\DeclareSymbolFontAlphabet{\mathbb}{AMSb}%
\tikzset{use fpu reciprocal/.code={%
\def\pgfmathreciprocal@##1{%
    \begingroup
    \pgfkeys{/pgf/fpu=true,/pgf/fpu/output format=fixed}%
    \pgfmathparse{1/##1}%
    \pgfmath@smuggleone\pgfmathresult
    \endgroup
}}}%
\newtheorem{theorem}{Theorem}[section]
\newtheorem{lemma}[theorem]{Lemma}
\newtheorem{proposition}[theorem]{Proposition}
\newtheorem{corollary}[theorem]{Corollary}
\declaretheorem[name=Theorem, sibling=theorem]{theorem-restate}
\theoremstyle{definition}
\newtheorem{remark}[theorem]{Remark}
\DeclareMathOperator*{\emaxop}{max}
\NewDocumentCommand{\emax}{oe{_}}{%
  \IfNoValueTF{#2}
    {\emaxop\nolimits\IfValueT{#1}{^{#1}}}
    {\IfNoValueTF{#1}{\emaxop_{#2}}{\emaxcomplex{#1}{#2}}}%
}
\newcommand{\emaxcomplex}[2]{\mathop{\mathpalette\emax@{{#1}{#2}}}}
\newcommand{\emax@}[2]{\emax@@{#1}#2}
\newcommand{\emax@@}[3]{
  \ifx#1\displaystyle\emax@@@{#2}{#3}\else\emaxop_{#3}^{#2}\fi
}
\newcommand{\emax@@@}[2]{%
  \begingroup\m@th
  \sbox0{$\displaystyle\emaxop$}%
  \sbox2{$\displaystyle\emaxop_{#2}$}%
  \dimen@=\dimexpr(\wd2-\wd0)/2\relax
  \sbox4{$^{#1}$}%
  \ifdim\wd4>\dimen@ \dimen@=\dimexpr\wd4-\dimen@ \else \dimen@=0pt\fi
  \operatorname*{max^{#1\kern-\wd4}}_{#2}\kern\dimen@
  \endgroup
}
\newcommand{\mrm}{\mathrm}
\newcommand{\msf}{\mathsf}
\newcommand{\mc}{\mathcal}
\newcommand{\mb}{\mathbf}
\newcommand{\E}{\mathbb E}
\newcommand{\PP}{\mathbb P}
\renewcommand{\P}{\mathbb P}
\newcommand{\ZZ}{\mathbb Z}
\newcommand{\Z}{\mathbb Z}
\newcommand{\RR}{\mathbb R}
\newcommand{\R}{\mathbb R}
\newcommand{\NN}{\mathbb N}
\newcommand{\N}{\mathbb N}
\newcommand{\rDe}{\mathring{\Lambda}}
\newcommand{\don}{\mathds{1}}
\newcommand{\cD}{\mathcal D}
\newcommand{\cK}{\mathcal K}
\newcommand{\cP}{\mathcal P}
\newcommand{\cL}{\mathcal L}
\renewcommand{\L}{\mathcal L}
\newcommand{\cM}{\mathcal M}
\newcommand{\cZ}{\mathcal Z}
\newcommand{\F}{\mathcal F}
\newcommand{\Fext}{\mathcal F_\mathrm{ext}}
\newcommand{\one}{\mathbbm{1}}
\newcommand{\bx}{\mathbf x}
\newcommand{\by}{\mathbf y}
\newcommand{\fh}{\mathfrak h}
\newcommand{\midd}{\ \Big|\ }
\newcommand{\middd}{\ \bigg|\ }
\newcommand{\intint}[1]{\llbracket#1\rrbracket}
\newcommand{\diff}{\mathrm{d}}
\newcommand{\h}{\mathfrak h}
\newcommand{\hh}{\hat{\mathfrak h}}
\newcommand{\qq}[1]{[\![{#1}]\!]}
\numberwithin{equation}{section}
\title{
van den Berg-Kesten--type correlation inequalities for disjoint polymers in the KPZ universality class}
\author{Shirshendu Ganguly}%
\thanks{Department of Statistics, UC Berkeley, Berkeley, CA, USA. e-mail: sganguly@berkeley.edu}
\author{Milind Hegde}%
\thanks{Division of Mathematical Sciences, School of Physical and Mathematical Sciences, Nanyang Technological Uni-
versity, Singapore. e-mail: milind.hegde@ntu.edu.sg}
\author{Lingfu Zhang}%
\thanks{The Division of Physics, Mathematics and Astronomy, California Institute of Technology, Pasadena, CA, USA. e-mail: lingfuz@caltech.edu}
\date{}
\begin{document}

\begin{abstract}
In classical percolation theory, the van den Berg-Kesten (BK) inequality is a fundamental tool that shows that disjoint events induce negative conditionings on each other. 
The inequality also holds in the context of  last passage percolation (LPP), which is the zero temperature limit of polymer models and an important subclass in the Kardar-Parisi-Zhang (KPZ) universality class. Recently, an analog of the BK inequality was discovered in the context of zero temperature line ensembles and the scaling limit of LPP, where it was used to study upper tail probabilities of the weight and the scaling limit of geodesics under such upper tail conditionings. 
However, while it has become apparent that such an inequality in the positive temperature setting would have a number of applications, it seems likely that a direct generalization of the zero temperature inequality would not hold.
In this work we prove a version of the BK inequality for the KPZ line ensemble and the continuum directed random polymer. We do so by working with the log gamma polymer, making use of its integrability and the geometric RSK correspondence. 
Our inequality serves as a key input in analyzing the KPZ line ensemble and proving sharp upper tail estimates of the KPZ equation in \cite{GH22}, and proving convergence of the continuum directed random polymer to Brownian bridge under the upper tail event in \cite{ganguly2023brownian}. The crucial role of integrability in the validity of such an inequality is highlighted via a counter-example for a non-integrable model.
\end{abstract}

\maketitle

\setcounter{tocdepth}{1}

\tableofcontents{}

\section{Introduction and main results}\label{s.intro}

\subsection{Background on the zero temperature BK inequality} The van den Berg-Kesten (BK) \cite{van1985inequalities,reimer2000proof} inequality is a well-known and important tool in classical Bernoulli percolation on $\Z^d$. It says that the probability of two events occurring ``disjointly'' (i.e., their occurrence can be verified by inspecting the values of the i.i.d.\ random variables associated to disjoint subsets of the graph) is upper bounded by the product of the individual events' probabilities, i.e., they are negatively correlated. 

While in its original form it was stated in the case that the random variables are Bernoulli-distributed, it also extends to the case of general distributions on $\R$ \cite{arratia2018van}. This has made it a useful tool also in the study of last passage percolation (in which one considers the maximum energies of directed paths passing through an i.i.d. random field, where the energy of a path is the sum of the variables it passes through) on $\Z^2$ \cite{basu2022interlacing,basu2022nonexistence,basu2023exponent,schmid2023mixing,alberts2025large}, an important subclass of models in the Kardar-Parisi-Zhang universality (KPZ) class.

Another important tool in the study of the KPZ universality class are \emph{line ensembles}, or collections of random interacting functions admitting a local Gibbs property. Two central examples are the parabolic Airy line ensemble \cite{CH11} and the KPZ line ensemble \cite{CH14}. The parabolic Airy line ensemble is a zero temperature object associated to a limiting LPP model known as the directed landscape, while the KPZ line ensemble is a positive temperature object associated to a limiting polymer model known as the continuum directed random polymer (CDRP). Both line ensembles have curves indexed by $\N$, where the $k$ lowest indexed curves together encode information about the energy of $k$ \emph{disjoint} paths for every $k$.

This connection to disjoint paths suggests that the above line ensembles should enjoy a version of the BK inequality. This was proven for the case of the parabolic Airy line ensemble \cite{GH22} (though its proof operates at the level of the line ensemble directly). The setting of the KPZ line ensemble is significantly more subtle, as will become apparent shortly, and proving a version of the BK inequality in that setting is the goal of this paper; the question of the validity of the BK inequality for the KPZ equation was also raised by Xuan Wu to the authors at the KPZ meets KPZ workshop at the Fields Institute in 2024. 

Let us briefly state the inequality proved for the parabolic Airy line ensemble $(\cP_1, \cP_2, \ldots)$ in \cite{GH22}. Let $a<b$, $\mc C([a,b],\R)$ be the space of real-valued continuous functions on $[a,b]$, and $\msf A$ be an increasing Borel measurable subset of $\mc C([a,b],\R)$, where increasing means that if $f\in \msf A$ and $g\in\mc C([a,b],\R)$ satisfies $g\geq f$ pointwise, then $g\in \msf A$. The version of the BK inequality from \cite{GH22} says that, almost surely,
\begin{align}\label{e.airy bk}
\P\left(\cP_2(\bm\cdot)|_{[a,b]} \in \msf A \midd \cP_1\right) \leq \P\left(\cP_1(\bm\cdot)|_{[a,b]} \in \msf  A\right);
\end{align}
in words, conditioned on the first curve, the second curve is stochastically dominated by an unconditioned copy of the first curve. This inequality was used in \cite{GH22} to derive sharp estimates on upper tail probabilities of the first curve (in fact, this argument was given for a general class of line ensembles satisfying certain assumptions, including a much weakened form of \eqref{e.airy bk}).

\begin{remark}\label{r.different BKs}
  We point out that, while the classical BK inequality suggests an inequality of the type of \eqref{e.airy bk} for the parabolic Airy line ensemble, it does not imply it. This is due to the fact that generic events of $\cP_2$ and $\cP_1$  encode energies of collections of two disjoint paths as a whole, and not of each path individually. However, certain special events can be related to events of the energies of the individual paths, for which, indeed, \eqref{e.airy bk} is implied by the classical BK inequality.%
\end{remark}

\subsection{The BK inequality in positive temperature}\label{s.bk in positive temp}
Given its great utility in LPP and other zero temperature contexts, it is of interest to obtain a generalization of the BK inequality in positive temperature settings, such as polymer models and their associated line ensembles. 
The curves of the positive temperature analog of $\cP$, the KPZ line ensemble, which we will denote $\h_{t} = (\h_{t,1}, \h_{t,2}, \ldots)$, are associated to the free energy of polymers or collections of disjoint polymers in a continuum polymer model. Here, informally, the free energy is the logarithm of the partition function, which is the integral over all directed paths of the exponential of the path's energy. Note the contrast with LPP, where one considers the single path with the maximum energy. Our goal is to obtain an inequality like \eqref{e.airy bk} with $\h_t$ in place of $\cP$.

The distinction just noted already poses a conceptual obstacle to obtaining a form of the BK inequality in the positive temperature context. Indeed, the weights of paths in LPP models are determined by just the portion of the environment they pass through, which works well with providing disjoint ``certificates'' of events occurring as needed in the classical BK inequality. The free energy in a polymer model, however, necessarily involves all the randomness in the environment, and so, for instance, while the largeness of $\cP_2$ can be tied to the existence of two disjoint paths of large energies, such a simple disjoint certificate is not available for the largeness of $\h_{t,2}$.
Thus no naive or immediate generalization of the BK inequality from LPP is possible. This issue can be understood as fundamentally arising from entropy.

Nevertheless, in this work we prove a form of the BK inequality for the free energy of the CDRP and the KPZ line ensemble. In the next section we introduce our main objects more precisely and in Section~\ref{s.results} we state our main results.

\begin{remark}
As we will see, the inequality we prove for the KPZ line ensemble differs slightly from \eqref{e.airy bk}, due to the complications arising from the entropy phenomenon in positive temperature. We will expand more on this point after the statements of the results are given.
\end{remark}

\subsection{Continuum directed random polymer \& KPZ line ensemble}  \label{ssec:CDRP}

In this section we define the CDRP's disjoint polymer partition functions and the KPZ line ensemble. We will not need the polymer measure for the CDRP.
For all of these objects, we take the inverse temperature $\beta=1$ throughout this paper for simplicity of notation, while our arguments go through verbatim for any fixed $\beta$.

\subsubsection{The CDRP partition function, multiplicative stochastic heat equation, and its multi-line extension}  \label{sss:pf}
Let $\xi$ be a space-time white noise, and define $\R^4_{\uparrow} := \{(x,s;y,t) \in \R^4 : s<t\}$.
For $(x,s;y,t) \in \R^4_{\uparrow}$, let $(x,s; y,t) \mapsto \cZ(x,s;y,t)$ be the (mild) solution to the multiplicative stochastic heat equation (SHE) defined by requiring, for all $x,s\in\R$,
\begin{equation}\label{e.SHE definition}
\begin{cases}
\partial_t \cZ(x,s;y,t) = \frac{1}{4}\partial_{y}^2 \cZ(x,s;y,t) + \cZ(x,s;y,t)\xi(y,t) & s<t \\
\cZ(x,s; \cdot, s) = \delta_x,
\end{cases}
\end{equation}
where the same space-time white noise $\xi$ is used for all choices of initial coordinates $(x,s)$, and $\delta_x$ is the delta mass at $x$. The initial condition is understood in the weak sense, i.e., with probability one $\lim_{t\to s}\int f(y)\cZ(x,s; y, t) \diff y = f(x)$ for all smooth functions $f$ of compact support.

This random field was constructed in \cite[Theorem 3.1]{alberts2014continuum} and is a continuous process. (More precisely, the field constructed in \cite{alberts2014continuum} satisfies \eqref{e.SHE definition} with coefficient $\frac{1}{2}$ in place of $\frac{1}{4}$, but this is related to our solution by a simple scaling by constants; we adopt this variant of the SHE in order to obtain more convenient coefficients later and match other parts of the literature.)

We will also need the multi-line extension originally introduced in \cite{o2016multi}. First denote 
\[
p_t(x)=\frac{1}{\sqrt{\pi t}}\exp(-x^2/t)
\]
and, for any $m\in\N$ and $s<t$, let 
\begin{equation}\label{e.lambda definition}
\begin{split}
\Lambda_m &= \Bigl\{(t_1, \ldots, t_m) \in \R^m: t_1 \leq  \ldots  \leq t_m\Bigr\} \quad\text{and}\\
\Lambda_m([s,t]) &= \Bigl\{(t_1, \ldots, t_m) \in \R^m: s\leq t_1 \leq  \ldots  \leq t_m \leq t\Bigr\}.
\end{split}
\end{equation}
For any $s<t$, $x, y \in \RR$, and $k\in\ZZ_+$, we let $\cZ_k(x,s;y,t)$ be defined by
\begin{align}\label{e.Z_k continuum}
\cZ_k(x,s;y,t):= p_{t-s}(y-x)^k
\left(
1+\sum_{m=1}^\infty \int_{\Lambda_m([s,t])}\int_{\RR^m}\!\!R((x_1,t_1),\ldots, (x_m,t_m)) \prod_{i=1}^m W(\diff x_i, \diff t_i)
\right),
\end{align}
where $R$ denotes the $m$-point correlation function for a collection of $k$ non-intersecting rate two Brownian bridges which all start at
 $x$ at time $s$ and end at $y$ at time $t$. This generalizes the definition \eqref{e.SHE definition} of $\cZ$ above; in particular, $\cZ_1 = \cZ$. Further, $\mc Z_k(x,s;y,t)$ can be thought of as the partition function for $k$ disjoint paths in the CDRP all starting at the common point $(x,s)$ and ending at the common point $(y,t)$.

In \cite{o2016multi} the definition \eqref{e.Z_k continuum} is given for any fixed $k$ and $x,s,y,t$ and the $L^2(W)$ convergence of the chaos expansion is proved. In \cite[Corollary 1.9, 1.11]{nica2021intermediate}, it is shown that
\[
(y,k)\mapsto \log\frac{\cZ_k(0,0;y,t)}{\cZ_{k-1}(0,0;y,t)}
\]
is a (scaled) KPZ$_t$ line ensemble, as defined in \cite[Theorem 2.15]{CH14} (up to certain scalings by constant coefficients); therefore $\cZ_k(x,s;y,t)$ has a version which is continuous in $y$, for any fixed $k$ and $x, s, t$.
In \cite[Theorem 1.1]{LW}, it is further shown that $(x,y,t)\mapsto \cZ_k(x,s;y,t)$ can be defined as a continuous function, for any fixed $k$ and $s$.
Recall also that $\cZ$ can be defined as a four-parameter random continuous function.
It is also shift, shear, and reflection invariant (in distribution) (see Lemma~\ref{l.Z symmetries}).

\medskip

\noindent\textbf{Scaling.} Under certain limiting transitions (either $t\to \infty$ or $\beta\to\infty$) and after appropriate centering and scaling (see \eqref{e.Z_1 Z_2 relation with h_1 h_2} and \eqref{e.scaling for KPZ}), the logarithm of $\cZ$, which can be understood as a solution to the KPZ equation, converges to the directed landscape \cite{AHALE,wu2023kpz}.
While we do not actually use this convergence in this paper, it is for this reason that we adopt our choice of scaling coefficients as mentioned above.\footnote{Consider $\tilde \cZ$, the solution to the usual multiplicative SHE $\partial_t \tilde \cZ = \frac{1}{2}\partial_{yy}\tilde\cZ + \tilde\cZ\xi$, and the processes $\tilde \cZ_k$ as defined in \cite[eq. (9)]{o2016multi}. Then it holds that $\cZ_k(x,s;y,t)=2^k\tilde\cZ_k(2x,2s;2y,2t)$, where the $2^k$ factor can be thought of as ensuring the initial condition, $\cZ(x,s; \cdot, t)\to \delta_x$ as $t\to s$ in the weak sense, holds.}

\subsubsection{Multi-point partition function with distinct endpoints}   \label{sss:mpp}

For any $\bx=(x_1,\cdots, x_n), \by=(y_1,\cdots, y_n)\in \Lambda_n$, and $s<t$, we define
\begin{equation}\label{e.K definition}
\cK_n(\bx, s;\by, t) := \det[\cZ(x_i,s;y_j,t)]_{i,j=1}^n.
\end{equation}
Then from the continuity of $\cZ=\cZ_1$, we have that $\cK_n(\bx, s;\by, t)$ is almost surely continuous in all the variables. 

$\cK_n(\mb x, s; \mb y, t)$ should be understood as the partition function of $n$ disjoint paths, one each from $(x_i,s)$ to $(y_i,t)$ for $i=1, \ldots, n$, where the starting points are \emph{distinct} (as otherwise the determinant above is zero, reflecting that the entropy of disjoint paths with common endpoints is zero). From this perspective \eqref{e.K definition} can be understood as a form of the Lindstr\"om-Gessel-Viennot lemma (see Lemma~\ref{lem:disj-alter-def} ahead) in the continuum. To go from $\mc K_n$ to $\mc Z_n$ (i.e., the case of common starting and ending points), one must first normalize $\mc K_n$ by a factor that matches the entropy contribution and then take a limit of the distinct points. This normalization factor is explicit and given by the product of the Vandermonde determinants of $\mb x$ and $\mb y$ (see \eqref{e.M definition} ahead).

\subsubsection{KPZ line ensemble}
An important tool in the study of the KPZ equation and the free energy of the CDRP is the KPZ$_t$ line ensemble from \cite{CH14} and its associated Gibbs property.
To be concise, at this stage we simply state  some relevant facts connecting these objects, with references given in Section~\ref{s.prelim tools}.

For any $t>0$ and $x\in\RR$, we denote 
\begin{equation}\label{e.Z_1 Z_2 relation with h_1 h_2}
\h_{t,1}(x) := \log\cZ(0,0;x,t)+\frac{t}{12},\quad \h_{t,2}(x) := \log\frac{\cZ_2(0,0;x,t)}{\cZ(0,0;x,t)}+ \frac{t}{12}.
\end{equation}
We note that $\mc H(x,t) := \h_{t,1}(x)$ solves the KPZ equation
\begin{align*}
\partial_t \mc H = \tfrac{1}{4}\partial_x^2 \mc H + \tfrac{1}{4}(\partial_x \mc H)^2 + \dot{W}
\end{align*}
in a formal sense, i.e., $\cZ(x,t) = \exp(\mc H(x,t))$ solves the multiplicative SHE \eqref{e.SHE definition}.

Thanks to the scaling in defining $\cZ$, $\h_{t,1}$ has the parabolic decay of $-x^2/t$. More precisely, $x\mapsto \smash{\fh^{\beta}_{t,1}(x)}+x^2/t$ is stationary (which can be deduced from the shear invariance of $\cZ$, Lemma~\ref{l.Z symmetries}).
We also denote
\begin{equation}\label{e.scaling for KPZ}
\hh_{t,1}(x):=t^{-1/3}\h_{t,1}(t^{2/3}x), \quad \hh_{t,2}(x):=t^{-1/3}\h_{t,2}(t^{2/3}x).
\end{equation}
As a result of this scaling, $\hh_{t,1}$ and $\hh_{t,2}$ have the parabolic decay of $-x^2$ independent of $t$. This is also the scaling under which the $t\to\infty$ limit is the parabolic Airy line ensemble.

\subsection{Main results}\label{s.results}

For any $a<b$, a Borel measurable subset $\msf A\subseteq \mc C([a,b, \R])$ is called \emph{increasing}, if for any $f\in \msf A$ and $g\in \mc C([a,b, \R])$ such that $g\geq f$ pointwise, $g\in \msf A$ holds. For a function $f$ and real number $\lambda$, we define $f-\lambda$ to be the function $x\mapsto f(x)-\lambda$.  Our first main result is the following inequality.

\begin{theorem}   \label{thm:disj-BK-line-ensemble}
There exist $C, c, L_0>0$ such that the following holds. Let $y\in\R$, $K\geq 0$, and $\msf A\subseteq \mc C([0,K],\R)$ be an increasing Borel measurable set. For any $t>0$, $L\geq L_0(t^{-1/6}\vee 1)$, and $M > C(L+y^2)^{3/4}$,
\begin{align*}
\P\Bigl(\hh_{t,2}|_{[y,y+K]} - Ct^{-1/3}\log M \in \msf A \midd \hh_{t,1}(y) = L \Bigr)
&\leq \P\left(\hh_{t,1}|_{[y,y+K]} \in \msf A\right) + 3(K+1)t^{2/3}\exp(-cM^2).
\end{align*}
The same also holds when the conditioning is replaced by $\hh_{t,1}(y) \geq L$. The same also holds under both conditionings when the processes are considered on $[y-K, y]$ rather than $[y,y+K]$.

\end{theorem}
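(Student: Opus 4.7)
As noted in Section~\ref{s.bk in positive temp}, no naive disjoint-path certificate is available in positive temperature, so the plan is to work at the prelimit of an integrable model -- the log-gamma polymer -- exploiting the geometric RSK correspondence to make the joint law of the one-path and two-path partition functions explicit, and then to pass to the continuum limit. As preparation one recovers the common-endpoint partition function $\cZ_2(0,0;x,t)$ appearing in $\h_{t,2}$ as a coalescence limit of the distinct-endpoint partition function $\cK_2$ from equation~\eqref{e.K definition}, normalized by an explicit Vandermonde factor (as indicated after that equation). The key feature of $\cK_2$ is the Lindstr\"om-Gessel-Viennot-type representation recorded in Lemma~\ref{lem:disj-alter-def}: it is a genuinely positive partition function over disjoint pairs of paths, so a meaningful disjoint interpretation is available for $\cK_2$ even though it is not for $\cZ_2$ directly.

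\textbf{Geometric RSK and the integrable inequality.} Via convergence of the log-gamma polymer to the CDRP and the KPZ line ensemble, the desired continuum inequality follows from its discrete analogue on the log-gamma polymer. On that model the joint distribution of the one- and two-path partition functions admits an explicit Whittaker-type description through geometric RSK, so conditioning on the analogue of $\hh_{t,1}(y)$ amounts to conditioning a single coordinate of a tractable integrable measure, and the induced conditional law of the two-path partition function becomes tractable. The heart of the argument is then to exploit this representation to show that, after a downward shift of $Ct^{-1/3}\log M$, the conditional two-path ratio is stochastically dominated, up to an event of probability $\exp(-cM^2)$, by an unconditional copy of the one-path process. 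The logarithmic shift matches exactly the entropic price produced by the Vandermonde normalization from the reduction step, and the Gaussian-in-$M$ tolerance emerges as a Tracy-Widom-type right-tail bound for the second-curve fluctuation field at scale $M$.

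\textbf{Limit and variants.} The inequality for the KPZ line ensemble then follows by passing to the CDRP/KPZ limit and using uniform tail bounds for the polymer partition functions over the compact window of interest (the extra $(K+1)t^{2/3}$ factor in the error arises from a union bound over a suitable mesh of $[y,y+K]$ before invoking continuity of $\hh_{t,2}$). The version with the conditioning $\hh_{t,1}(y)\ge L$ is recovered by integrating the pointwise version against the conditional distribution of $\hh_{t,1}(y)$ on this event, using monotonicity of the right-hand side in $L$. The version on the reflected interval $[y-K,y]$ follows from the reflection symmetry of $\cZ$ recorded in Lemma~\ref{l.Z symmetries}.

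\textbf{Main obstacle.} The crux is the integrable step: geometric RSK provides an explicit joint description but not, on its own, any stochastic domination, and the positive-temperature entropy contribution must be extracted in the precise logarithmic form $Ct^{-1/3}\log M$ rather than as an ill-controlled random shift. Identifying the right functionals of the Whittaker measure that witness the domination, and matching the resulting tolerance to $\exp(-cM^2)$, is the main technical challenge. A secondary issue is ensuring that the prelimit inequality is sufficiently uniform in $L$, $M$, $y$ and $K$ (in the stated ranges) to survive the passage to the continuum limit.
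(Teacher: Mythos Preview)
Your outline follows the paper's route at the coarsest level --- log-gamma via gRSK, pass to the CDRP, then coalesce endpoints --- but it misplaces the origin of the two error terms and omits the mechanism that actually produces the domination.

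First, the log-gamma inequality (Theorem~\ref{thm:disj-BK-log-g-new}) and its CDRP limit (Theorem~\ref{thm:disj-BK}) are \emph{exact}: no shift, no error. Both the $Ct^{-1/3}\log M$ shift and the $\exp(-cM^2)$ tolerance appear only when one replaces the distinct-endpoint $\cK_2$ by the coinciding-endpoint $\cZ_2$. Concretely, one sets the endpoint separation to $\varepsilon=M^{-4}$, uses the integral identity of Lemma~\ref{l.K Z identity} relating $\cK_2$ to $\cZ_2$, and controls the approximation via Brownian modulus-of-continuity bounds (Lemmas~\ref{l.two-point supremum for line ensemble} and \ref{l.two point for kpz sheet}) at fluctuation scale $\lambda=M$. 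The $\log M$ is the $\log\varepsilon^{-2}$ Vandermonde factor; the $\exp(-cM^2)$ is the probability that the modulus-of-continuity bound fails --- not a Tracy--Widom tail as you suggest. So ``matching the tolerance to $\exp(-cM^2)$'' is not a challenge at the Whittaker level.

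Second, ``exploit the Whittaker representation to show stochastic domination'' is where the real work lies, and your sketch does not identify the mechanism. The paper's argument is: (i) an extended invariance identity (Proposition~\ref{p.extended invariance}) expresses the two-path partition function with one endpoint on the right boundary as a single-path partition function in a \emph{joined pair} of line ensembles; (ii) this yields that the conditioned two-path ratio equals a single-path partition function in the line ensemble with its top curve deleted (Corollary~\ref{corr:cross-line}); (iii) the explicit gRSK density (Lemma~\ref{lem:shift}, Corollary~\ref{cor:reweight}) shows that conditioning on the top curve reweights the smaller ensemble by a functional that is \emph{decreasing} in the i.i.d.\ inverse-gamma variables; (iv) FKG then gives the negative association and hence the domination. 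This is the step that fails for non-integrable weights (Remark~\ref{r.counterexample}), so it cannot be soft.

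Finally, for Theorem~\ref{thm:disj-BK-line-ensemble} specifically, you do not explain the role of the hypothesis $M>C(L+y^2)^{3/4}$. In the paper this theorem is an immediate specialization of Theorem~\ref{thm:disj-BK-line-ensemble-technical} with $R=0$, $f(y)=L$: the error term there carries a denominator $\P(\hh_{t,1}(y)\ge L)^{-1}$, and the upper-tail \emph{lower} bound $\P(\hh_{t,1}(y)\ge L)\ge\exp(-c(L+y^2)^{3/2})$ of Proposition~\ref{p.lower bound on upper tail} is what allows this denominator to be absorbed into $\exp(-cM^2)$ under the stated constraint on $M$.
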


In words, \eqref{e.airy bk} holds for the KPZ line ensemble as well if we condition on the first curve on a single point $y$ and consider the second curve on either side of $y$, up to a logarithmic shift of the second curve and an error of $\exp(-cL^2)$. The $C\log M$ term (which depends on $L$ through the lower bound on $M$) can be understood as a quantitative instantiation of the role of entropy mentioned above in Section~\ref{s.bk in positive temp}, as can be seen in the proof. While we do expect some sort of a shift to be needed for a form of the BK inequality to hold for the KPZ line ensemble, we do not know whether a logarithmic shift is the true behavior. One also obtains a version of the BK inequality for the Airy line ensemble (i.e., zero temperature) by taking, e.g., $M= Lt$ and taking $t\to\infty$; see Remark~\ref{r.other choices of error term}. The lower bounds on $L$ and $M$ are not strictly necessary, but are assumed to obtain a cleaner form of the error bound.

Theorem~\ref{thm:disj-BK-line-ensemble} is actually a fairly immediate consequence of a slightly more technical result which we state next and which will be proven in Section~\ref{s.log gamma to CDRP}. In the following, for two real-valued functions $f$, $g$ defined on a common domain $\mc D$, we write $f\geq g$ as shorthand for $f(x)\geq g(x)$ for all $x\in\mc D$. For $f$ defined on $\mc D$ and $\mc D'\subseteq \mc D$, $f|_{\mc D'}$ denotes the restriction of $f$ to $\mc D'$. Finally, a.e.\ $f \in \mc C(I, \R)$ for an interval $I$ means all functions $f$ in a probability $1$ set with respect to the law of Brownian motion on $I$ with a normally distributed starting point.

\begin{restatable}{theorem-restate}{BKlineensembletechnical}
\label{thm:disj-BK-line-ensemble-technical}
There exist $C, c>0$ such that the following holds. Let $y\in\R$, $K, R\geq 0$, and $\msf A\subseteq \mc C([0,K],\R)$ be an increasing Borel measurable set. For any $t>0$, $M > 0$, and a.e.\ $f \in \mc C([y-R,y], \R)$,
\begin{equation}\label{e.general bk}
\begin{split}
  \MoveEqLeft[14]
\P\Bigl(\hh_{t,2}|_{[y,y+K]} - Ct^{-1/3}\log M \in \msf A \midd \hh_{t,1}|_{[y-R, y]} = f \Bigr)\\
&\leq \P\left(\hh_{t,1}|_{[y,y+K]} \in \msf A\right) + \frac{3(K+1)t^{2/3}\exp(-cM^2)}{\P(\hh_{t,1}|_{[y-R,y]} \geq f)}.%
\end{split}
\end{equation}
The same also holds when the conditioning is replaced by $\hh_{t,1}|_{[y-R,y]} \geq f$, in which case we may relax the continuity assumption and allow any $f:[y-R,y]\to \R\cup\{-\infty\}$ which is upper semicontinuous. 

All of the previous also hold under both conditionings when $+K$ is replaced by $-K$ and $-R$ by $+R$ simultaneously.

\end{restatable}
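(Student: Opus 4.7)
The plan is to prove this inequality by first establishing a discrete analog for the log gamma polymer, where the geometric RSK correspondence provides the required integrable structure, and then transferring the result to the CDRP and KPZ line ensemble via the KPZ scaling limit.

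In the discrete step, I would work with the log gamma polymer partition function $Z^{LG}$ and the two-disjoint-path partition function $Z^{LG}_2$. Under geometric RSK applied to the inverse gamma weight environment, the top two rows of the resulting triangular array encode $\log Z^{LG}$ and $\log(Z^{LG}_2/Z^{LG})$ respectively, and the image measure has an explicit Whittaker structure. From this I aim to extract the discrete analog of \eqref{e.general bk}: conditionally on the first row (i.e.\ on a realization of the first partition function along a full time slice), the second row, after a shift that is polynomial in the height of the first row, is stochastically dominated by an independent copy of the first row. The shift originates in the Vandermonde-type normalization required to pass from the distinct-endpoint partition function $\mc K_n$ in Section~\ref{sss:mpp} to the coincident-endpoint partition function $\mc Z_n$; upon taking logarithms and applying the KPZ $t^{-1/3}$ scaling it becomes the additive $Ct^{-1/3}\log M$ appearing in the statement.

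To pass to the continuum, I would use the joint convergence of the first two log gamma polymer partition functions, under KPZ scaling, to $(\hh_{t,1},\hh_{t,2})$. The conditioning on a whole curve $\hh_{t,1}|_{[y-R,y]}=f$ is handled via regular conditional probabilities. The denominator $\P(\hh_{t,1}|_{[y-R,y]}\geq f)$ in the error term of \eqref{e.general bk} arises naturally by first proving the cleaner inequality-conditioning version, and then using Markov's inequality in the disintegration to deduce the equality-conditioning version from it. The upper semicontinuous relaxation in the inequality-conditioning case follows by a monotone approximation of $f$ from above combined with the lower semicontinuity of the probability $\P(\hh_{t,1}|_{[y-R,y]}\geq\bm\cdot)$ under pointwise monotone decreasing limits.

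The main obstacle is the discrete step: extracting the stochastic domination statement from the Whittaker structure, with a shift that is merely polynomial in the height of the first row, requires a careful exploitation of the integrability and is the place where the log gamma model is essentially used. A secondary technical issue is to maintain uniform regularity of the two-line ensembles along the scaling limit so that the conditioning on the entire curve $f$ passes to the limit; for this the discrete Gibbs property inherited from the RSK output should provide the needed equicontinuity, while the a.e.\ assumption on $f$ absorbs the remaining null sets produced in the conditional disintegration.
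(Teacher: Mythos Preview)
Your high-level plan (log-gamma via geometric RSK, then a scaling limit to the CDRP/KPZ line ensemble) matches the paper's route, and you correctly identify the Vandermonde normalization as the source of the shift. But the discrete step as you describe it has a genuine gap.

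You propose to show that, at the discrete level, ``the second row, after a shift that is polynomial in the height of the first row, is stochastically dominated by an independent copy of the first row.'' This is not what can be proved, and the paper explains why in Remark~\ref{r.spread points}: if you work with the actual second row of the RSK output (adjacent endpoints), the clean comparison available is to the first curve of the $(n-1)$-system line ensemble, whose centering differs from that of the $n$-system by $\log n$. Passing this through the KPZ scaling yields a trivial inequality in the limit. No ``shift polynomial in the height of the first row'' fixes this, because the discrepancy is a deterministic $\log n$ coming from entropy, not a fluctuation. The paper's resolution is structurally different: it proves a \emph{shift-free} inequality at the discrete level for the two-path partition function with \emph{separated} endpoints (Theorem~\ref{thm:disj-BK-log-g-new}), passes that cleanly to the continuum to obtain Theorem~\ref{thm:disj-BK} for $\cK_2$, and only \emph{then}, entirely in the continuum, converts $\cK_2$ to $\cZ_2$ (equivalently $\hh_{t,2}$) using the integral identity of Lemma~\ref{l.K Z identity} and Brownian regularity (Lemma~\ref{l.Z K comparison}). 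This last step is precisely where the $Ct^{-1/3}\log M$ shift and the $\exp(-cM^2)$ error probability appear. Your plan collapses these two stages into one discrete step, which is where it would fail.

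Two smaller points. First, your handling of the equality versus inequality conditioning is inverted relative to the paper: the paper uses monotonicity in conditioning (Lemma~\ref{l.conditional monotonicity}) to reduce the equality case to the inequality case, and the denominator $\P(\hh_{t,1}|_{[y-R,y]}\geq f)$ then arises from the trivial bound $\P(\cE^c\mid \hh_{t,1}\geq f)\leq \P(\cE^c)/\P(\hh_{t,1}\geq f)$ on the bad regularity event, not from a Markov-type disintegration. Second, the discrete stochastic domination is proved not by reading off the Whittaker structure directly, but via an extended invariance identity (Proposition~\ref{p.extended invariance}) that expresses the two-path partition function as a single-path partition function in the line ensemble with the top line removed, together with an explicit reweighting formula (Corollary~\ref{cor:reweight}) and the FKG inequality for the i.i.d.\ environment; your sketch does not suggest this mechanism.
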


Observe that the error term looks different in Theorems~\ref{thm:disj-BK-line-ensemble} and \ref{thm:disj-BK-line-ensemble-technical}, in that the latter's has a denominator which is a probability. In fact, one of the steps in going from Theorem~\ref{thm:disj-BK-line-ensemble-technical} to Theorem~\ref{thm:disj-BK-line-ensemble} is lower bounding that probability and absorbing it into the $\exp(-cM^2)$ term in the numerator; this is the reason we impose a lower bound of order $L^{3/4}$ on $M$ in Theorem~\ref{thm:disj-BK-line-ensemble} (in this setting the probability in the denominator is like $\exp(-cL^{3/2})$).

We also point out that the interval on which the values of $\hh_{t,1}$ are conditioned on and the interval on which the event of $\hh_{t,2}$ is considered are disjoint, unlike \eqref{e.airy bk} which had no such constraint. The proofs of these positive temperature results goes through the discrete log-gamma polymer model, and the above constraints arise due to the use of certain Markovian structures in the latter.

Theorem~\ref{thm:disj-BK-line-ensemble-technical} is actually proved as a consequence of the following slightly cleaner result, which is about the two-point disjoint polymer partition functions directly. It will be proven in Section~\ref{s.log gamma to CDRP}.

\begin{theorem}   \label{thm:disj-BK}
Fix any real numbers $x_1<x_2$ and $y_1\in\R$. Let $\msf A\subseteq \mc C([0,K],\R)$ for some $K>0$ be an increasing Borel measurable set. Then for any $s<t$, $R>0$, and a.e.\ $f\in \mc C([y_1-R, y_1], \R)$,
\begin{align*}
\MoveEqLeft[26]
\PP \Bigl(\log \cK_2((x_1,x_2), s; (y_1, \bm\cdot), t)|_{[y_1,y_1+K]} - \log \cZ(x_1, s; y_1, t)  \in \msf A  \midd \log \cZ(x_1, s; \bm\cdot, t)|_{[y_1-R, y_1]} = f \Bigr)\\
&\leq \PP \Bigl( \log \cZ(x_2, s; \bm\cdot, t)|_{[y_1,y_1+K]} \in \msf A \Bigr).    
\end{align*}
The same also holds when the conditioning is replaced by $\log \cZ(x_1, s; y_1, t)|_{[y_1-R, y_1]} \geq f$. The same also holds under both conditionings when $+K$ and $-R$ are simultaneously replaced by $-K$ and $+R$ in the above.
\end{theorem}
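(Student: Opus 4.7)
The strategy is to first apply the Lindstr\"om-Gessel-Viennot (LGV) identity to reformulate the quantity of interest, then discretize to the log-gamma polymer where integrability via the geometric RSK correspondence (gRSK) yields the required conditional stochastic domination, and finally pass back to the continuum limit.

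\medskip

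\textbf{LGV reformulation.} The $n=2$ case of \eqref{e.K definition} reads
\begin{equation*}
\frac{\cK_2((x_1,x_2), s; (y_1, y), t)}{\cZ(x_1, s; y_1, t)} = \cZ(x_2, s; y, t) - \cZ(x_1, s; y, t)\cdot \frac{\cZ(x_2, s; y_1, t)}{\cZ(x_1, s; y_1, t)}.
\end{equation*}
Since the subtracted term is nonnegative, the unconditional pointwise bound by $\cZ(x_2,s;y,t)$ is trivial; the substance of the theorem is that it survives the conditioning on $\log\cZ(x_1, s; \cdot, t)|_{[y_1-R, y_1]}$. Because $\cZ(x_1, s; \cdot, t)$ and $\cZ(x_2, s; \cdot, t)$ are driven by the same white noise, such conditioning \emph{a priori} biases $\cZ(x_2,s;\cdot,t)$, and integrable structure is needed to control it.

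\medskip

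\textbf{Reduction to the log-gamma polymer.} Let $Z^N$ and $K_2^N$ denote the one- and two-path partition functions of the log-gamma polymer under an appropriate intermediate-disorder scaling (LGV holds at the discrete level as well). Known convergence results give joint convergence of the suitably recentered $\log Z^N$ and $\log(K_2^N/Z^N)$ to $\log\cZ$ and $\log(\cK_2/\cZ)$ as continuous functions of their endpoints. Given a discrete analog of the theorem, the continuum version follows via Portmanteau applied to the increasing set $\msf A$ together with standard inner/outer approximation by open and closed increasing sets. It suffices to treat the equality-conditioning version; the $\geq$-conditioning version follows by integrating over the conditioning function.

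\medskip

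\textbf{Discrete inequality via gRSK.} In the log-gamma model, apply gRSK to the weight matrix to obtain a triangular array whose joint law has an explicit Whittaker-type product density. First-row entries of the output encode the single-path partition functions $Z^N(x_1; \cdot)$, while determinantal combinations involving the second row reproduce $K_2^N$. Exploiting the Markov structure intrinsic to gRSK in the direction orthogonal to the row indexing, together with the placement of the conditioning interval $[y_1-R, y_1]$ strictly to the left of the event interval $[y_1, y_1+K]$, one realizes the conditional law of $Z^N(x_2; \cdot)$ on $[y_1, y_1+K]$ (given $Z^N(x_1; \cdot)$ on the conditioning interval) as a pointwise-dominated transform of an independent unconditioned copy of $Z^N(x_2; \cdot)$. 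Substituting into the LGV identity and using that $\msf A$ is increasing yields the discrete statement. The reflected variant ($-K$, $+R$) follows by the reflection symmetry of $\cZ$ (Lemma~\ref{l.Z symmetries}), applied at the discrete level.

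\medskip

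\textbf{Main obstacle.} The core difficulty is executing the coupling in the previous step: identifying the correct Markov chain embedded in the gRSK output and verifying that its transitions preserve the stochastic domination of the LGV-subtracted process by an unconditioned copy of $Z^N(x_2; \cdot)$. This is precisely where integrability is used, in agreement with the counterexample for non-integrable models mentioned in the introduction. A secondary, more routine issue is making the continuum limit faithful to conditioning on a function-valued event rather than a finite-dimensional one; this should be handled via continuity of the conditional densities of $\log Z^N$, themselves inherited from the explicit Whittaker density of the gRSK output.
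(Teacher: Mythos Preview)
Your high-level architecture (discretize to log-gamma, use gRSK/integrability, take a continuum limit) matches the paper's, and your continuum-limit discussion is roughly in the right spirit. But the core discrete step is not right.

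The gap is in your ``Discrete inequality via gRSK'' paragraph. You propose to show that, conditionally on $Z^N(x_1;\cdot)$, the process $Z^N(x_2;\cdot)$ is stochastically dominated by an \emph{unconditioned} copy, and then to substitute into the LGV identity. This domination is false in general: $Z^N(x_1;\cdot)$ and $Z^N(x_2;\cdot)$ are increasing functions of the same i.i.d.\ weights, so by FKG they are \emph{positively} associated; conditioning $Z^N(x_1;\cdot)$ to be large pushes $Z^N(x_2;\cdot)$ up, not down. The point of the theorem is precisely that the subtracted term in your LGV expansion compensates for this, and LGV alone does not see the cancellation.

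What the paper actually does is avoid the LGV decomposition entirely and work with the ratio $T_2/T_1$ directly. The key device is an \emph{extended invariance identity} (Proposition~\ref{p.extended invariance} and Corollary~\ref{corr:cross-line}): it rewrites the two-path-over-one-path ratio as a \emph{single}-path partition function in the line ensemble $\{Z_j^{m,n}\}$ with the top row removed. Then the explicit gRSK density (Lemma~\ref{lem:shift}) shows that, conditionally on the top row, the remaining rows have the law of an independent $(m-1)\times(n-1)$ line ensemble reweighted by an explicit factor $\Gamma_g$ (Corollary~\ref{cor:reweight}). Crucially, $\Gamma_g$ is a \emph{decreasing} function of the underlying i.i.d.\ weights, so FKG (Lemma~\ref{lem:neg-cor-fA}) gives negative correlation with any increasing event. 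This is where the sign flips from positive to negative association, and it happens only because one is comparing with a smaller-system partition function, not with the original $Z^N(x_2;\cdot)$. Your proposal never produces this structure, and without it the argument cannot close.

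A secondary point: your continuum-limit step glosses over conditioning on a function. The paper handles this by first reducing (via a martingale argument) to conditioning on finitely many points, then thickening to $\varepsilon$-boxes so that the conditioning event is a continuity set, applying Lemma~\ref{l.weak limit of conditioned measures}, and finally sending $\varepsilon\to 0$.
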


There are a number of natural generalizations of Theorem~\ref{thm:disj-BK} one can consider, such as studying $\cK_n$ rather than $\cK_2$, or allowing $x_2$ to vary as well. Some of these generalizations are in fact obtainable rather quickly by similar arguments as presented in this paper. We discuss this further in Section~\ref{s.generalizations}.

The logarithmic shift and $\exp(-cL^2)$ error terms in Theorems~\ref{thm:disj-BK-line-ensemble} and \ref{thm:disj-BK-line-ensemble-technical} arise when moving from $\log \cK((x_1,x_2), s; (y_1, \bm\cdot), t)$ to $\hh_{t,2}$, as this requires certain approximations and modulus of continuity bounds to hold. This is an instantiation of the role of entropy as the source of these error terms, since, as mentioned above, the difference between $\log \cK_2$ and $\hh_{t,2}$ is precisely that the latter (actually, $\log \cZ_2 = \hh_{t,2} + \hh_{t,1}$) is a limit of the former after $\cK_2$ is normalized by an entropy factor, the product of Vandermonde determinants of the points $\mb x$ and $\mb y$ (see \eqref{e.M definition} ahead).  See also Remark~\ref{r.spread points} ahead.

The proof of Theorem~\ref{thm:disj-BK} goes via first establishing an analogous statement for the log-gamma polymer model; this is stated ahead in Section~\ref{s.bk for log gamma} as Theorem~\ref{thm:disj-BK-log-g-new}. The log-gamma polymer model is a discrete polymer model on $\Z^2$ first introduced in \cite{MR2917766}, which we define more precisely in Section~\ref{s.log gamma}. It is an integrable model, here meaning that there are exact formulas for the joint distribution of various quantities of interest (such as the analogs of $\cZ_i$). Perhaps somewhat surprisingly, this integrability seems important for the validity of the result; indeed, there exist counterexamples to the inequality stated in Theorem~\ref{thm:disj-BK-log-g-new} for other distributions on the vertex weights; see Remark~\ref{r.counterexample}. This is in contrast to the situation in zero temperature, where  for all i.i.d.\  vertex weight distributions the BK inequality holds, in its original form for the primal environment  as described at the beginning of the paper as well as certain versions of the line ensemble form.

\begin{remark}[Terminology]
  As already indicated, the form of the inequalities we prove differ from \eqref{e.airy bk}, and in fact it is not clear what is the correct or sharpest analog of \eqref{e.airy bk} is for the KPZ line ensemble. Nevertheless, as we have already been doing, we will sometimes still refer, somewhat loosely, to the inequalities we do prove for the KPZ line ensemble (Theorems~\ref{thm:disj-BK-line-ensemble} and \ref{thm:disj-BK-line-ensemble-technical}) as well as one we prove for the continuum directed random polymer (Theorem~\ref{thm:disj-BK}) as the BK inequality. In that sense the term can be thought of as referring to a class of inequalities which, broadly, control the behavior of the second curve conditional on the first curve in terms of an unconditioned copy of the first curve.

\end{remark}

\subsection{Applications}

The BK inequality and variants of it have already proven to be very useful in studies involving line ensembles in the KPZ universality class. Here we briefly discuss its role in two works, \cite{GH22} and \cite{ganguly2023brownian}.

The first work, \cite{GH22}, studies upper tail behavior of the first curve of line ensembles satisfying certain natural assumptions like correlation inequalities by making use of probabilistic resampling ideas. The results are of two types: the first type obtains sharp upper and lower bounds on the probability of upper tail events, while the second type describes the limit shape or fluctuation behavior of the line ensembles when conditioned on such events. In fact, the latter is an ingredient of the proofs of the former. To establish the fluctuation behavior of the top curve under an upper tail conditioning, one needs to know that the conditioning does not cause the second curve to rise too high (e.g., in zero temperature, this would force the first curve to move up as well, affecting the fluctuations). The BK inequality is a natural tool which provides exactly such a guarantee, and (in fact, a much weaker version) is one of the assumptions imposed on the line ensembles for the framework of \cite{GH22} to apply; the arguments of the paper are not sensitive to the precise form of the inequality. Further, the BK inequality \eqref{e.airy bk} was proved in \cite{GH22} for the parabolic Airy line ensemble, but a (weaker) version for the KPZ line ensemble was only conjectured (in an earlier version). 

Our Theorem~\ref{thm:disj-BK-line-ensemble} is sufficient to meet the requirements of the framework developed in \cite{GH22}. Consequently, the KPZ line ensemble results, including the sharp upper tail estimates for the KPZ equation, also follow (as now formally recorded in \cite{GH22} in the latest arXiv version).
More precisely, the earlier conjectured inequality in \cite{GH22} stated that the KPZ line ensemble satisfies a form of the BK inequality with a shift that is logarithmic in the size of the interval over which the first curve is conditioned on, rather than a function of the values of the first curve as proved in Theorem~\ref{thm:disj-BK-line-ensemble-technical}. Based on the entropic source of the shift as discussed above, we believe that conjecture is too strong to be true as stated. In any case, on a practical level, Theorem~\ref{thm:disj-BK-line-ensemble-technical} would suffice for taking the place of such an inequality in most arguments (as was the case in \cite{GH22} itself).

The second work, \cite{ganguly2023brownian}, also studies upper tail behavior, but of certain path measures associated to the first curve of the line ensembles. More precisely, in zero temperature, the first curve of the parabolic Airy line ensemble records the weight of a geodesic in the directed landscape started at $(0,0)$ as the endpoint varies along a horizontal line, while in positive temperature, the first curve of the KPZ line ensemble records the free energy of a polymer starting at $(0,0)$ as its endpoint varies in the same way. In both cases we have an associated path measure, namely the law of the geodesic and the annealed polymer measure, respectively. \cite{ganguly2023brownian} established that, when conditioned on the first curve of the respective line ensembles to be large and taking the largeness parameter to $\infty$, the path measures (appropriately rescaled) in both cases converge to a Brownian bridge. Theorem~\ref{thm:disj-BK-line-ensemble} is a crucial input to the proofs, and is used in a number of locations for similar reasons as in \cite{GH22}.

We note that chronologically it is somewhat unusual that this paper appears subsequent to the above mentioned papers \cite{GH22} or \cite{ganguly2023brownian} for which it serves as an input. This is due to the fact that one of the roles of this paper is to fix an earlier incorrect formulation and proof of the BK inequality of the KPZ line ensemble that those papers relied on, which was first pointed out to us by Xuan Wu.
We also emphasize that the arguments in this paper have no dependencies that create a circular argument with the previous papers. In particular, this paper is self-contained except for the use of some basic properties and estimates for the objects of interest.

\subsection{Ideas of proofs}\label{s.iop}

As mentioned above, the first key step is to reduce \Cref{thm:disj-BK-line-ensemble-technical} to \Cref{thm:disj-BK}.
Then to get \Cref{thm:disj-BK}, we consider the log-gamma polymer model, to be introduced in \Cref{s.log gamma}.
It is a discrete model, and is known to converge to the CDRP under appropriate scaling (see \Cref{l.Z joint convergence} below).
Under the geometric Robinson-Schensted-Knuth (gRSK) correspondence (see e.g., \cite{COSZ}), 
the log-gamma polymer can be mapped to another collection of random variables, which can be viewed as a discrete line ensemble.
This is the prelimit analog of the connection between the CDRP and the KPZ line ensemble.

We note that it is more convenient for us to work with the log-gamma polymer than CDRP.
Indeed, unlike the KPZ line ensemble, the log-gamma polymer discrete line ensemble consists of finitely many random variables; and exact expressions for its joint distributions are available (see e.g., \Cref{lem:shift}). 
In particular, it also has a Markovian structure.
We will prove the log-gamma version of the BK inequality (\Cref{thm:disj-BK-log-g-new}), and then pass to a scaling limit to obtain \Cref{thm:disj-BK}.

To prove \Cref{thm:disj-BK-log-g-new} we prove an extension of a well-known identity \cite{C21,NY} of disjoint path partition functions in the log-gamma environment and that in the associated discrete line ensemble; a version of this extension in the zero temperature case was communicated to us a few years ago by Duncan Dauvergne. The original identity (and its earlier zero temperature analog \cite{DOV}) required the starting points of the paths to be on the bottom-most line and the ending points to be on the top most line. Here we relax the requirement imposed on the ending points and allow them to also be on the right boundary, at the cost of the identity requiring an additional line ensemble to be introduced (associated to the same log-gamma environment but for paths whose ending points vary in the vertical direction rather than horizontal); see Figure~\ref{f.intro extended invariance}. In a sense this extension is very natural since the gRSK correspondence is precisely a bijection when including partition functions whose endpoints are the top and right boundaries; however, the partition functions that make up the second line ensemble are not defined by endpoints varying on the right boundary but on the left boundary. 

\begin{figure}
  \begin{tikzpicture}[scale=0.7]

\newcommand{\thewidth}{4}
\newcommand{\theheight}{3}

\draw[densely dotted] (0,0) grid (\thewidth, \theheight);

\foreach \y in {0, ..., \theheight}
\foreach \x in {0,...,\thewidth}
{
  \node[fill, circle, inner sep = 1pt] at (\x,\theheight-\y) {};
}

\foreach \x in {0,...,\thewidth}
  \draw[orange, thick] (0,0) -- (\x,\theheight);

\node[anchor = south, scale=0.75] at (4, 3) {$(m, n)$}; 
\node[anchor = north, scale=0.75] at (0,0) {$(1, 1)$};

\begin{scope}[shift={(6,0)}]
  \draw[densely dotted] (0,0) grid (\thewidth, \theheight);

  \foreach \y in {0, ..., \theheight}
  \foreach \x in {0,...,\thewidth}
  {
  \node[fill, circle, inner sep = 1pt] at (\x,\theheight-\y) {};
  }

  \foreach \y in {0,...,\theheight}
    \draw[cyan, thick] (\thewidth,\theheight) -- (0,\y);

  \node[anchor = south, scale=0.75] at (4, 3) {$(m, n)$}; 
  \node[anchor = north, scale=0.75] at (0,0) {$(1, 1)$}; 
\end{scope}


\begin{scope}[shift={(11,0)}]

\node[anchor = south, scale=0.75] at (5, 3) {$(m, n)$}; 
\node[anchor = south, scale=0.75] at (1, 3) {$(1, n)$}; 
\node[anchor = north east, scale=0.75] at (3.2, 1) {$(a, n-a+1)$};

\draw[red, thick] (3,1.045) -- ++(1,0) -- ++(0,0.955) -- ++(1,0) -- ++(0,1) -- ++(2,0) -- ++(0,-0.955) -- ++(1,0) -- ++(0,-1);

\draw[cyan, thick] (3,1) -- ++(2,0) -- ++(0,1) -- ++(2,0) -- ++(0,-1) -- ++(1,0);

\draw[orange, thick] (3,0.955) -- ++(5,0);

\renewcommand{\thewidth}{4}
\renewcommand{\theheight}{3}

\begin{scope}[shift={(1,0)}]
\foreach \y in {0, ..., \theheight}
\foreach \x in {\y,...,\thewidth}
{
  \ifnum \x < \thewidth
    \draw[densely dotted] (\x,\theheight-\y) -- ++(1,0);
  \fi
  \node[fill, circle, inner sep = 1pt] at (\x,\theheight-\y) {};

  \ifnum \x > \y 
    \ifnum \y < \theheight
    \draw[densely dotted] (\x,\theheight-\y) -- ++(0,-1);
    \fi
  \fi
}
\end{scope}

\foreach \y in {0, ..., \theheight}
{
  \draw[dashed, dash phase = 4pt, thick] (\thewidth+1, \y) -- ++(1,0);
   \draw[dashed, dash phase = 3pt, thick] (\thewidth+2, \y) -- ++(1,0);
  \node[draw, fill=white, circle, inner sep = 1pt] at (\thewidth+2,\y) {};
}

\renewcommand{\thewidth}{3}
\renewcommand{\theheight}{3}

\begin{scope}[shift={(7,0)}]
\foreach \y in {0, ..., \theheight}
\foreach \x in {\y,...,\thewidth}
{
  \ifnum \x < \thewidth
    \draw[densely dotted] (\thewidth-\x,\theheight-\y) -- ++(-1,0);
  \fi
  \node[fill, circle, inner sep = 1pt] at (\thewidth-\x,\theheight-\y) {};

  \ifnum \x > \y 
    \ifnum \y < \theheight
    \draw[densely dotted] (\thewidth-\x,\theheight-\y) -- ++(0,-1);
    \fi
  \fi
}

\node[anchor = south west, scale=0.75] at (-0.75, 3) {$(m+1, n)$}; 
\node[anchor = south, scale=0.75] at (3, 3) {$(m+n, n)$}; 
\node[anchor = west, scale=0.75] at (1, 1) {$(m+\ell, \ell)$};

\end{scope}
\end{scope}
\end{tikzpicture}
\caption{The left and middle panels depict the endpoints whose partition functions define the left and right line ensembles (black vertices), respectively, in the right panel. Proposition~\ref{p.extended invariance} says that the partition function in the original environment when the starting point is on the bottom line and the ending point is on the top or right boundaries equals a certain partition function in the combined line ensemble depicted on the right (some paths which contribute to the latter are shown). Note that the two line ensembles are connected by an auxiliary column of vertices shown in white; these vertices have associated weights which prevent a certain double counting from appearing in the weights of certain paths in the joint line ensemble, which is needed for Proposition~\ref{p.extended invariance}. }\label{f.intro extended invariance}
\end{figure}
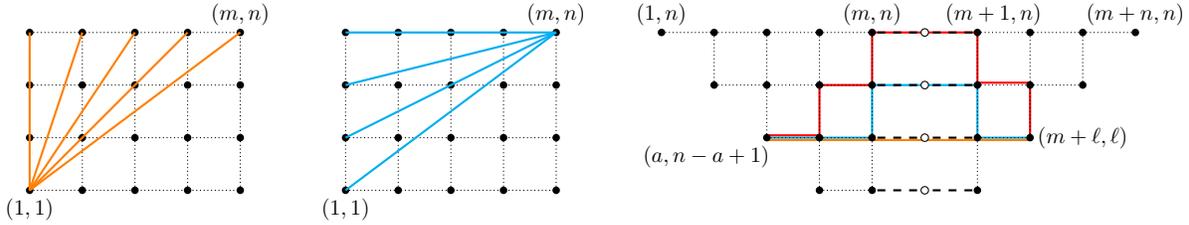

Then we work with the partition function of two disjoint log-gamma polymers. Using the above mentioned identity, we relate it to the single polymer partition function in a smaller environment. A key observation is that, conditional on the top line of the line ensemble, the latter partition function's law can be written as a reweighting of the law of the partition function in the original environment explicitly. Moreover, the reweighting has a negative effect. This gives us the inequality by invoking the FKG inequality for the underlying i.i.d.\ random variables.

\subsection{Organization of paper}
In Section~\ref{s.bk for log gamma} we introduce the log-gamma polymer model and prove the version of the BK inequality for disjoint polymers (the analog of Theorem~\ref{thm:disj-BK}) for it. In Section~\ref{s.prelim tools}, we introduce some basic estimates and properties of the main objects in the paper. These are all fairly well-known, but a couple have their proofs deferred to Appendix~\ref{s.misc proofs}. Finally, in Section~\ref{s.log gamma to CDRP}, we perform the scaling limit to obtain Theorem~\ref{thm:disj-BK} and give the argument that uses the latter to yield Theorem~\ref{thm:disj-BK-line-ensemble}.

\subsection*{Notation}
Throughout this paper, we will use $C, c>0$ to denote large and small constants, whose values may, and often will, change from line to line.
For any  $x,y\in\RR\cup\{-\infty, \infty\}$ with $x\le y$, we denote $\qq{x, y} = [x,y]\cap \ZZ$.

For any $m\in\NN$, we recall the simplex $\Lambda_m$ defined by
\[
\Lambda_m=\{(t_1, \cdots, t_m): t_1\le \cdots \le t_m\},
\]
and for any $s<t$, 
\[
\Lambda_m([s,t])=\{(t_1, \cdots, t_m): s\le t_1\le \cdots \le t_m\le t\}.
\]
We use $\rDe_m$ and $\rDe_m([s,t])$ to denote the interiors of $\Lambda_m$ and $\Lambda_m([s,t])$, respectively.

For any topological space $\mc X$, we use $\mc C(\mc X, \R)$ to denote the space of real continuous functions on $\mc X$ with the uniform topology.

We will occasionally use notation such as $\P(\,\bm\cdot \mid X = K)$ for a random element $X$ in some space and a deterministic element $K$ of the same space to denote the conditional probability distribution given that $X=K$. The precise meaning of this is to consider the regular conditional distribution $\P(\,\bm\cdot \mid X)$ (which exists in the situations we consider by well-known abstract results such as \cite[Theorem 8.5]{Kallenberg}) and evaluate the associated probability kernel at $K$. In discrete settings such as in Section~\ref{s.bk for log gamma} these expressions can be defined much more directly, of course.

\subsection*{Acknowledgements}
SG was partially supported by  NSF CAREER grant DMS-1945172 and a Miller Professorship at The Miller Institute of Basic Research in Science. MH was partially supported by NSF grants DMS-1937254 and DMS-2348156, and by the NTU Support Grant for Research Award Number \#025661-00007. LZ is supported by the NSF grant DMS-2505625.
The authors first learned a version of the extended invariance identity (Proposition~\ref{p.extended invariance},  in the zero temperature setting) from Duncan Dauvergne a few years ago, and we thank him for allowing us to state and use it in this paper.
The authors would also like to thank Amol Aggarwal and Jiaoyang Huang for some helpful discussions, and Xuan Wu for raising the question of the BK inequality in positive temperature settings.

\section{BK for disjoint endpoints in log-gamma}\label{s.bk for log gamma}

As mentioned in Section~\ref{s.iop} on the ideas of the proof, to prove Theorem~\ref{thm:disj-BK} we work with the log-gamma polymer model, which we recall in Section~\ref{s.log gamma}. For the latter, our main result in this section is Theorem~\ref{thm:disj-BK-log-g-new}, its version of the BK inequality. Another important ingredient is the extended invariance identity for partition functions in a given environment and that in the associated line ensemble, stated ahead in Section~\ref{s.extended identity} as Proposition~\ref{p.extended invariance}.

\subsection{The log-gamma polymer model}\label{s.log gamma}

The log-gamma model was introduced in \cite{Se12}, and is defined as follows. Take $\theta>0$, and let $\{X_{\mb v}\}_{\mb v\in \Z^2}$ be independent random variables with inverse-gamma distribution of parameter $\theta$; i.e., with probability density given by 
\begin{equation}\label{e.inverse gamma density}
\frac{1}{\Gamma(\theta)}x^{-\theta-1}\exp(-x^{-1}) \quad \text{for } x>0.
\end{equation}

For any $\mb u, \mb v \in \Z^2$ with $\mb u\le \mb v$ in each coordinate, and $k\in\N$, we define the $k$ disjoint polymer partition function
\begin{equation}\label{e.T definition}
T_k(\mb u,\mb v) := \sum_{(\gamma_1, \ldots, \gamma_k)} \prod_{j=1}^k \prod_{\mb w\in \gamma_j} X_{\mb w},
\end{equation}
where the summation is over all collections of $k$ \emph{vertex-disjoint} up-right paths $(\gamma_1, \ldots, \gamma_k)$, with each $\gamma_i$ from $\mb u+(i-1,0)$ to $\mb v+(-k+i,0)$. (By convention, $T_k(\mb u,\mb v):=0$ if no such collection of paths exists.)

We will often need a slight generalization of the previous definition. Take any $2k$ vertices $\mb u_1, \ldots, \mb u_k, \mb v_1, \ldots, \mb v_k \in \Z^2$, such that for each $i, j\in \intint{1, k}$, $\mb u_i\le \mb v_j$ in each coordinate, and any up-right path from $\mb u_i$ to $\mb v_j$ and any up-right path from $\mb u_j$ to $\mb v_i$ must intersect (e.g., all the $\mb u_i$ lie on the same horizontal line in order and the same for the $\mb v_j$). We define the $k$ disjoint polymer partition function
\begin{equation}\label{e.T definition-sep}
T([\mb u_1, \ldots, \mb u_k], [\mb v_1, \ldots, \mb v_k]) := \sum_{(\gamma_1, \ldots, \gamma_k)} \prod_{j=1}^k \prod_{\mb w\in \gamma_j} X_{\mb w},
\end{equation}
where the summation is over all collections of $k$ \emph{vertex-disjoint} up-right paths $(\gamma_1, \ldots, \gamma_k)$, with each $\gamma_i$ from $\mb u_i$ to $\mb v_i$. 
In particular, it is again taken to be zero if no such collection of paths exists.

Our main result for the log-gamma polymer is the following prelimiting version of Theorem~\ref{thm:disj-BK}. It will be proven in Section~\ref{s.bk log gamma proof new}.

\begin{theorem}   \label{thm:disj-BK-log-g-new}
Let $\theta>0$. For any integers $m, n\ge 2$, $f:\intint{1,m}\to \R_+$, and increasing Borel measurable set $\msf A \subseteq \R^{(m-1)(n-1)}$,
\begin{equation*}%
\begin{split}
\MoveEqLeft[18]
  \PP \left( \Bigl\{ f(m)^{-1} T([(1, 1), (a, 1)], [(m,n), (m,b)])\Bigr\}_{a\in \intint{2, m}, b\in \intint{1, n-1}} \in \msf A  \;\Big|\; T_1((1,1), (\bm\cdot,n))|_{\intint{1,m}} = f \right) \\ 
  &\le \PP \left( \Bigl\{T_1((a-1,1), (m-1,b))\Bigr\}_{a\in \intint{2, m}, b\in \intint{1, n-1}} \in \msf A \right).    
\end{split}
\end{equation*}
\end{theorem}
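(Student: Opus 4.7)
The plan is to combine the extended invariance identity of Proposition~\ref{p.extended invariance}, a Markovian decomposition of the gRSK image of the log-gamma environment (via explicit formulas, e.g.\ Lemma~\ref{lem:shift}), and the FKG (Harris) inequality for independent random variables.

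The first step is to apply Proposition~\ref{p.extended invariance} to rewrite the two-disjoint partition functions $T([(1,1),(a,1)],[(m,n),(m,b)])$ with endpoints on the right boundary as single-path partition functions in the combined environment depicted in the right panel of Figure~\ref{f.intro extended invariance}. The conditioning data $T_1((1,1),(\cdot,n))|_{\intint{1,m}}$ also appears naturally as (a function of) the topmost curve of this combined line ensemble, so that after this step both the conditioning data and the target quantities can be treated simultaneously inside one environment.

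The second step is to compute the conditional law of the random environment in $\intint{1,m-1}\times\intint{1,n-1}$ (together with the auxiliary column) given the top line $f$. Using the explicit joint density under gRSK, for which an inverse-gamma-type formula is available --- this is precisely where \emph{integrability} of the log-gamma distribution is used, cf.\ Remark~\ref{r.counterexample} --- I expect this conditional law to admit a density $w(\{X_{\mb w}\})$ with respect to the unconditional product inverse-gamma law, where $w$ depends only on $f$, is normalized so that $\E[w]=1$, and is \emph{monotone decreasing} in each $X_{\mb w}$. Under this reparameterization, the conditional probability on the left-hand side of the theorem becomes $\E[w\cdot\one_{\msf A}(\{T_1((a-1,1),(m-1,b))\}_{a,b})]$, with the partition functions now computed in the unconditional product environment on $\intint{1,m-1}\times\intint{1,n-1}$.

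Finally, since $\msf A$ is increasing and each $T_1((a-1,1),(m-1,b))$ is manifestly increasing in the environment variables (by the sum-of-products formula \eqref{e.T definition}), the indicator $\one_{\msf A}$ is an increasing functional of $\{X_{\mb w}\}$. Pairing this with the monotone decreasing reweighting $w$ and applying the Harris/FKG inequality for independent random variables yields
\[
\E\bigl[w \cdot \one_{\msf A}\bigr] \leq \E[w]\cdot \E[\one_{\msf A}] = \PP\bigl(\{T_1((a-1,1),(m-1,b))\}_{a,b}\in \msf A\bigr),
\]
which is exactly the inequality claimed. The principal obstacle is the monotonicity claim for $w$ in the second step: writing down the gRSK joint density and integrating out the variables not determined by the top line is intricate, and the resulting $w$ must then be verified to be coordinatewise decreasing in each environment variable. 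I expect this monotonicity to follow from the polynomial-times-exponential structure of the inverse-gamma density \eqref{e.inverse gamma density}; for generic environment distributions the analogous reweighting would fail to be monotone, which is consistent with the counterexamples mentioned in Remark~\ref{r.counterexample}.
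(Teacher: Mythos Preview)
Your proposal is correct and follows essentially the same approach as the paper: the extended invariance identity (packaged as Corollary~\ref{corr:cross-line}) rewrites both the conditioned ratio and the target as single-path partition functions in line ensembles of sizes $(m,n)$ and $(m-1,n-1)$ respectively, Corollary~\ref{cor:reweight} identifies the conditional law of the former given its top line as an explicit reweighting $\Gamma_g$ of the latter, and FKG on the fresh $(m-1)\times(n-1)$ environment finishes. Your anticipated ``principal obstacle'' is easier than you expect: no integration is needed, since the reweighting factor $\Gamma_g$ in \eqref{e.Gamma_g definition} depends only on the top line $Z^{m-1,n-1}_1(\cdot)$ of the smaller ensemble and is manifestly decreasing in it, so monotonicity in each environment variable is immediate from the fact that single-path partition functions are increasing in the weights.
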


\begin{remark}\label{r.spread points}
  We point out that Theorem~\ref{thm:disj-BK-log-g-new} holds when the pair of starting points and the pair of ending points are adjacent (i.e., $a=2$ and $b = n-1$), in which case 
  $$T([(1, 1), (a, 1)], [(m,n), (m,b)])/T_1((1, 1), (m,n))$$
  is precisely the second curve of the discrete log-gamma line ensemble (associated to system size $n$). Thus one might expect to be able to take a limit of Theorem~\ref{thm:disj-BK-log-g-new} in this special case directly to the KPZ line ensemble and obtain the BK inequality without the logarithmic shift and probability error term present in Theorem~\ref{thm:disj-BK-line-ensemble-technical}.

  However, the issue is that the righthand side of Theorem~\ref{thm:disj-BK-log-g-new} involves the first curve of the log-gamma diffusion associated to system size $n-1$. Typically, due to entropy considerations, the first curve of the $(n-1)$-system line ensemble is $\log n$ higher than the second curve of the $n$-system. More precisely, this is essentially because the probability of two random walks of length $n$ started at a unit order separation have probability of order $n^{-1/2}$ of remaining disjoint for their lifetimes. This logarithmic shift is seen on a technical level in the fact that the centering terms required to take the limit to the KPZ line ensemble differs by $\log n$ between the two system sizes (as can be read off of Lemma~\ref{l.Z joint convergence} ahead). As a result, the limiting inequality obtained is trivial.

The crucial aspect of Theorem~\ref{thm:disj-BK-log-g-new} is that it allows points to be separate, which avoid this issue. The strategy is thus to take the limit when the endpoints are separate on the diffusive scale (so the underlying random walks have unit order probability of remaining disjoint) and to make use of modulus of continuity properties in the limit in order to make the points coincide. This will be done in Sections~\ref{s.prelim tools} and \ref{s.log gamma to CDRP}.
\end{remark}

Notice that Theorem~\ref{thm:disj-BK-log-g-new} allows the second endpoint to vary on the vertical line $\{(m,b) : b\in\intint{1,n-1}\}$. In the scaling limit to the CDRP, this corresponds to the endpoint varying temporally. However, it will be more convenient to allow the point to vary spatially, as these partition functions are what is captured by the KPZ line ensemble. For this, we record the following simple corollary that instead allows the second endpoint to vary on the horizontal line of index $n-1$.

\begin{corollary}\label{c.bk coupling log gamma new}
Let $\theta>0$. For any integers $b, m, n\ge 2$ with $b < m$ and function $f: \intint{1,b}\to\R$, the following holds. Let $\{\tilde T([(1, 1), (a, 1)], [(b,n), (b',n-1)])\}_{a\in \intint{2, b}, b'\in \intint{b+1, m}}$ be sampled from the conditional law of 
$$\{T([(1, 1), (a, 1)], [(b,n), (b',n-1)])\}_{a\in \intint{2, b}, b'\in \intint{b+1, m}} \text{ given } T_1((1,1), (\bm\cdot,n))|_{\intint{1,b}} = f.$$
Then, there exists a coupling of the law of $\{\tilde T([(1, 1), (a, 1)], [(b,n), (b',n-1)])\}_{a\in \intint{2, b}, b'\in \intint{b+1, m}}$ and that of $\{T_1((a,1), (b',n-1))\}_{a\in \intint{2, b}, b'\in \intint{b+1, m}}$ such that, for all $a\in \intint{1, b}$ and $b'\in \intint{b+1, m}$,
\begin{align*}
 f(b)^{-1}\tilde T([(1, 1), (a, 1)], [(b,n), (b',n-1)]) \leq T_1((a-1,1), (b'-1,n-1)).
\end{align*}
\end{corollary}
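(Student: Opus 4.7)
The plan is to reduce the corollary to Theorem~\ref{thm:disj-BK-log-g-new} applied on the smaller $b \times n$ sub-rectangle, and then to extend the resulting coupling to the complementary columns $\intint{b+1, m}$ using a path-splitting identity at the column $b \to b+1$ transition together with the translation invariance of the iid environment.

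First, I would apply Theorem~\ref{thm:disj-BK-log-g-new} with $m$ replaced by $b$, which (via Strassen's theorem) provides, conditionally on $T_1((1,1),(\bm\cdot,n))|_{\intint{1,b}} = f$, a coupling of
\[
U(a,j) := f(b)^{-1}\, T([(1,1),(a,1)],[(b,n),(b,j)]), \qquad V(a,j) := T_1((a-1,1),(b-1,j)),
\]
for $a \in \intint{2,b}$, $j \in \intint{1,n-1}$, with $U(a,j) \le V(a,j)$ pointwise. Since the conditioning event is measurable with respect to the environment in columns $\intint{1,b}$ only, I would enlarge the probability space by an independent iid inverse-gamma environment on columns $\intint{b+1, m}$, and from it define $W(j, b') := T_1((b+1,j), (b', n-1))$, which is independent of $(U, V)$.

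Next I would use that since $a \le b < b'$, every up-right path from $(a,1)$ to $(b', n-1)$ crosses from column $b$ to column $b+1$ at a unique row $j^* \in \intint{1,n-1}$; splitting at this crossing, combined with the fact that the portion of the second path in columns $\intint{b+1, b'}$ is automatically disjoint from the first path (which lives in $\intint{1,b}$), yields
\begin{align*}
T([(1,1),(a,1)],[(b,n),(b',n-1)]) &= \sum_{j^*=1}^{n-1} T([(1,1),(a,1)],[(b,n),(b,j^*)]) \cdot T_1((b+1,j^*),(b',n-1)), \\
T_1((a-1,1),(b'-1,n-1)) &= \sum_{j^*=1}^{n-1} T_1((a-1,1),(b-1,j^*)) \cdot T_1((b,j^*),(b'-1,n-1)).
\end{align*}
Setting $\tilde T(a,b') := f(b)^{-1} T([(1,1),(a,1)],[(b,n),(b',n-1)]) = \sum_{j^*} U(a,j^*) W(j^*,b')$ and $T'_1(a,b') := \sum_{j^*} V(a,j^*) W(j^*,b')$, the inequality $U \le V$ combined with $W \ge 0$ immediately yields $\tilde T(a,b') \le T'_1(a,b')$ for all $(a,b')$.

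Finally, I would identify the joint law of $\{T'_1(a,b')\}_{a,b'}$ with that of $\{T_1((a-1,1),(b'-1,n-1))\}_{a,b'}$, which by a further $(1,0)$ translation coincides in law with $\{T_1((a,1),(b',n-1))\}_{a,b'}$ as in the corollary's statement. The two needed ingredients are: (i) $V$ and $W$ are independent, since their defining environments lie in the disjoint column ranges $\intint{1,b-1}$ and $\intint{b+1,m}$; and (ii) translation invariance of the iid environment gives $\{W(j,b')\}_{j,b'} \stackrel{d}{=} \{T_1((b,j),(b'-1,n-1))\}_{j,b'}$, which is exactly the second factor appearing in the decomposition of $T_1((a-1,1),(b'-1,n-1))$ and is independent of the first factor there for the same disjoint-column reason. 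The main obstacle I anticipate is precisely the one-column shift bookkeeping between the LHS and RHS collections of the corollary; this is what forces one to work on the $b$-wide sub-rectangle on the left while producing $(b+1,j^*)$-indexed factors, and is resolved cleanly by the shift invariance above.
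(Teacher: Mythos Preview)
Your proposal is correct and follows essentially the same approach as the paper: apply Theorem~\ref{thm:disj-BK-log-g-new} on the $b\times n$ sub-rectangle via Strassen's theorem to obtain the coupling $U\le V$ on the right boundary, then extend both sides using the \emph{same} fresh i.i.d.\ environment in columns $\intint{b+1,m}$ via the column-crossing decomposition, and finally identify the laws by translation invariance. The paper's proof is slightly less explicit about the $(1,0)$ shift you carefully track, but otherwise the structure is identical.
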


\begin{proof}
The proof idea is to first use the coupling from Theorem~\ref{thm:disj-BK-log-g-new} to obtain a pair of environments such that the associated partition functions on the right boundary have the correct marginal distribution (one conditional on $T_1$ and the other not) and are dominated in the way specified there. Then, one augments both environments (i.e., extending the coupling) by inserting columns to the right of $x=m$ of the same i.i.d.\ inverse gamma random variables in both systems. This will imply a domination of the partition functions to the $(n-1)$\textsuperscript{st} line by a recursion. The details are below.

By Theorem~\ref{thm:disj-BK-log-g-new} and Strassen's theorem on stochastic domination (see, e.g., \cite{lindvall1999strassen}), it follows that there exists a coupling between $\{\tilde T([(1, 1), (a, 1)], [(b,n), (b,\ell)])\}_{a\in \intint{2, b}, \ell\in \intint{1, n-1}}$ and $\{T_1((a-1,1), (b-1,\ell))\}_{a\in \intint{2, b}, \ell\in \intint{1, n-1}}$ such that, almost surely, for all $a\in \intint{2, b}, \ell\in \intint{1, n-1}$,
\begin{align}\label{e.conditioned inequality}
f(b)^{-1}\tilde T([(1, 1), (a, 1)], [(b,n), (b,\ell)]) \leq T_1((a-1,1), (b-1,\ell)).
\end{align}
Next consider an i.i.d.\  collection $\{X_{(i,j)}\}_{i\in\intint{b+1, m}, j\in\intint{1,n-1}}$ of inverse gamma random variables. For $b'\in \intint{b+1, m}$, define
\begin{align*}
\tilde \tau([(1, 1), (a, 1)], [(b,n), (b',n-1)]) &:= \sum_{\ell=1}^{n-1} \tilde T([(1, 1), (a, 1)], [(b,n), (b,\ell)])\cdot \!\!\!\!\!\!\!\! \sum_{\gamma: (b+1,\ell) \to (b',n-1)} \prod_{(i,j)\in\gamma} X_{(i,j)},\\
\tau_1((a-1,1), (b'-1,n-1)) &:= \sum_{\ell=1}^{n-1} T_1((a-1,1), (b-1,\ell))\cdot \sum_{\gamma: (b+1,\ell) \to (b',n-1)} \prod_{(i,j)\in\gamma} X_{(i,j)}.
\end{align*}
Observe that, by \eqref{e.conditioned inequality},
\begin{align}\label{e.tau inequality}
f(b)^{-1}\tilde\tau([(1, 1), (a, 1)], [(b,n), (b',n-1)]) \leq \tau_1((a-1,1), (b'-1,n-1))
\end{align}
holds for all $a\in \intint{2, b}$ and $b'\in \intint{b+1, m}$.

It follows immediately from the definition \eqref{e.T definition-sep} of $T$ that $\{\tilde \tau([(1, 1), (a, 1)], [(b,n), (b',n-1)])\}_{a\in\intint{2,b}, b'\in\intint{b+1,m}}$ has the law of 
$$\{T([(1, 1), (a, 1)], [(b,n), (b',n-1)])\}_{a\in\intint{2,b}, b'\in\intint{b+1,m}} \text{ conditioned on } T_1((1,1), (\bm\cdot,n))|_{\intint{1,b}} = f.$$
 Similarly, $\tau_1((a-1,1), (b'-1,n-1))_{a\in\intint{2,b}, b'\in\intint{b+1,m}}$ has the law of $\{T_1((a-1,1), (b'-1,n-1))\}_{a\in\intint{2,b}, b'\in\intint{b+1,m}}$. 
This along with \eqref{e.tau inequality} completes the proof.
\end{proof}

\begin{remark}
Note that Corollary~\ref{c.bk coupling log gamma new} only allows the second endpoint to vary horizontally to the right of the last point $(m,n)$ whose associated partition function value is conditioned on. This is the source of the analogous ordering of intervals present in Theorems~\ref{thm:disj-BK-line-ensemble-technical} and \ref{thm:disj-BK}.
\end{remark}

\begin{remark}[Non-validity of Theorem~\ref{thm:disj-BK-log-g-new} for Bernoulli distributions]\label{r.counterexample}
We point out that Theorem~\ref{thm:disj-BK-log-g-new} is not true for general distributions, and thus the integrability of the log-gamma polymer model seems to play a crucial role in its validity. Indeed, consider the case of $m=n=2$ with $\{X_{\mb v}\}_{\mb v\in\intint{1,2}^2}$ being i.i.d.\ Bernoulli($p$) random variables for any $p\in(0,1)$ (i.e., $\P(X_{\mb v} = 1) = 1-\P(X_{\mb v} = 0) = p$). For notational convenience, write $\smash{T^{(2)}_1} = T_1((1,1), (2,2))$, $\smash{T^{(1)}_1} = T_1((1,1), (1,1))$ (the superscripts indicating the system size), and $\smash{T_2^{(2)}} = T_2((1,1), (2,2) )$. Then observe that 
\begin{align*}
T^{(1)}_1 &= X_{(1,1)},\\
T_1^{(2)} &= X_{(1,1)}X_{(2,2)}\left(X_{(1,2)}+X_{(2,1)}\right),\quad\text{and}\\
T_2^{(2)} &= X_{(1,1)}X_{(1,2)}X_{(2,1)}X_{(2,2)}.
\end{align*}
If we condition on $T_1^{(2)} = 2$, we force $X_{(1,1)}=X_{(1,2)}=X_{(2,1)}=X_{(2,2)} = 1$. So $T_2^{(2)}/T_1^{(2)} = \frac{1}{2}$ almost surely. Thus it holds that
\begin{align*}
\P\left(2^{-1}T_2^{(2)} \geq t \mid T_1^{(2)} = 2\right) = 1 \quad\text{ for all } t\leq \frac{1}{2}.
\end{align*}
On the other hand, for any $t>0$,
$$\P\Bigl(T_1^{(1)} \geq t\Bigr) \leq \P\Bigl(T_1^{(1)} \geq 1\Bigr)  = p < 1,$$
as $T_1^{(1)} = X_{(1,1)}$ is a Bernoulli($p$) random variable. Thus the main display of Theorem~\ref{thm:disj-BK-log-g-new} does not hold when $t\in(0,\frac{1}{2}]$, for any $p\in(0,1)$.

Though in this example there was complete freezing of all the vertex variables, this is not necessary. Indeed, one can consider vertex weights distributed as i.i.d.\ random variables uniform on $[0,1]$. Then $T^{(2)}_{1} \leq 2$ almost surely again. If one fixes $\delta\in (0,\frac{1}{2})$ and conditions on $T^{(2)}_1 \geq 2-\delta$, it follows that $\min(X_{(1,1)}, X_{(2,2)})) \geq 1-\frac{1}{3}\delta$ and $\min(X_{(1,2)}, X_{(2,1)}) \geq 1-\frac{2}{3}\delta$ almost surely. So, under the same conditioning, $T^{(2)}_2 \in [1-2\delta, 1]$ almost surely, since $(1-\frac{1}{3}\delta)^2(1-\frac{2}{3}\delta)^2 \geq 1-2\delta$. Thus,
\begin{align*}
\P\left(\frac{T_2^{(2)}}{T^{(2)}_1} \geq t \midd T_1^{(2)} \geq 2-\delta\right) = 1 \quad\text{ for all } t\leq \frac{1}{2}-\delta.
\end{align*}
But $\P(T_1^{(1)} \geq t) = 1-t < 1$ for all $t\in(0,1]$, yielding that the inequality of Theorem~\ref{thm:disj-BK-log-g-new} does not hold whenever $t\in(0,\frac{1}{2}-\delta]$. Thus violations to the inequality also exist among continuous distribution and in the absence of complete freezing.

\end{remark}

\subsection{Geometric RSK for general polymer models on $\Z^2$}

In this section we recall some basic facts about the geometric RSK (gRSK) correspondence in the context of general polymer models on $\Z^2$ that will be needed in the proof of Theorem~\ref{thm:disj-BK-log-g-new}. In particular, in this section as well as in Section~\ref{s.extended identity}, we work in the setting where $\{X_{\mb v}\}_{\mb v \in \Z^2}$ is simply a collection of non-negative real numbers. Quantities like $T_k$ are defined as in \eqref{e.T definition} and \eqref{e.T definition-sep}.

The gRSK correspondence can be defined via either row/column insertion algorithms (see e.g., \cite[Section 2]{COSZ}), or disjoint paths. We take the latter one, which involves defining certain partition functions $Z_j$ by
\begin{align}\label{e.Z_i defintion}
Z_1(\mb u,\mb v) := T_1(\mb u,\mb v),\text{ and } Z_j(\mb u,\mb v) := \frac{T_j(\mb u,\mb v)}{T_{j-1}(\mb u,\mb v)} \text{ for $j\ge 2$}
\end{align}
(with the convention of taking $Z_j(\mb u,\mb v) := 0$ when $T_{j-1}(\mb u,\mb v)=0$).

\begin{remark}\label{r.Z_i vs mathcal Z_i}
  We note that $Z_j$ for $j\geq 2$ is not quite a discrete analog of $\mc Z_j$ as defined in \eqref{e.Z_k continuum}. The difference is that the former is the ratio of partition functions for $j$ and $j-1$ disjoint polymers while the latter \emph{is} the partition function for $j$ disjoint polymers. This discrepancy arises from differing notation in two parts of the literature (\cite{COSZ} for $Z_j$ and \cite{o2016multi} for $\cZ_j$) and we have chosen to be consistent with both.
\end{remark}

For a fixed starting point such as $(1,1)$, we view $\smash{\{Z_j\}_{j=1}^n}$ with the ending point varying as a line ensemble, with $Z_j$ the values of the $j$\textsuperscript{th} line. Indeed, taking appropriate scaling limits of this line ensemble results in the KPZ or parabolic Airy line ensembles (see \cite{wu2019tightness} along with \cite{dimitrov2022characterization}, and \cite[Corollary 25.2]{AHALE}, respectively).

Disjoint polymer partition functions can be expressed as a determinant, a fact which will be very useful for us. The first equality in the following is an immediate consequence of the Lindstr\"om-Gessel-Viennot lemma \cite[Corollary 2]{gessel1989determinants} and the second is by definition \eqref{e.Z_i defintion}.

\begin{lemma}\label{lem:disj-alter-def}
Suppose $(\mb u_1, \ldots, \mb u_k)$ and $(\mb v_1, \ldots, \mb v_k)$ satisfy the condition stated before \eqref{e.T definition}. Then,
\[
T([\mb u_1, \ldots, \mb u_k], [\mb v_1, \ldots, \mb v_k]) = \det\left(T_1(\mb u_i, \mb v_j)\right)_{i,j=1}^k = \det\left(Z_1(\mb u_i, \mb v_j)\right)_{i,j=1}^k.
\]
\end{lemma}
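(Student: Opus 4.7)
The identity to establish is a classical instance of the Lindstr\"om--Gessel--Viennot (LGV) lemma, adapted to up-right paths on $\Z^2$ with multiplicative vertex weights. My approach is to dispatch the second equality immediately and then run the standard sign-reversing involution for the first.

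First, the equality $\det(T_1(\mb u_i,\mb v_j))_{i,j=1}^{k} = \det(Z_1(\mb u_i,\mb v_j))_{i,j=1}^{k}$ is trivial: by the definition \eqref{e.Z_i defintion}, $Z_1 = T_1$, so the two matrices coincide entry-by-entry. Thus all the work is in the first equality.

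For the first equality, I would expand the determinant and then rearrange the sum by path configurations. Concretely,
\begin{equation*}
\det\bigl(T_1(\mb u_i, \mb v_j)\bigr)_{i,j=1}^{k}
= \sum_{\sigma\in S_k} \mrm{sgn}(\sigma) \sum_{\substack{(\gamma_1, \ldots, \gamma_k) \\ \gamma_i\colon \mb u_i \to \mb v_{\sigma(i)}}} \prod_{i=1}^{k} \prod_{\mb w \in \gamma_i} X_{\mb w},
\end{equation*}
where the inner sum is over all (not necessarily disjoint) tuples of up-right paths with the prescribed endpoints. The plan is to apply a sign-reversing involution: on any tuple that contains at least one intersecting pair, select the lexicographically smallest intersecting pair $(i,j)$ of indices, locate the first (in some fixed order on $\Z^2$) common vertex of $\gamma_i$ and $\gamma_j$, and swap the tails of $\gamma_i$ and $\gamma_j$ after that vertex. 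This operation is an involution on intersecting tuples, it preserves the weight $\prod_i \prod_{\mb w\in\gamma_i} X_{\mb w}$ (since the multiset of visited vertices is unchanged), and it alters the permutation by the transposition $(i\,j)$, so it reverses the sign. Consequently all intersecting tuples cancel in pairs.

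What survives are the fully vertex-disjoint tuples. Here the hypothesis on the endpoints kicks in: for any $\sigma\neq \mrm{id}$, pick any $i$ with $\sigma(i)\neq i$ and let $j=\sigma^{-1}(i)\neq i$; then $\gamma_i$ runs from $\mb u_i$ to $\mb v_{\sigma(i)}$ while $\gamma_j$ runs from $\mb u_j$ to $\mb v_i$. Because the hypothesis forces every up-right path from $\mb u_j$ to $\mb v_i$ to meet every up-right path from $\mb u_i$ to $\mb v_j$, a short topological argument for up-right paths (if one pair crosses because of endpoint ordering, then so does any other pair with endpoints in the same ``inverted'' configuration) lets me conclude that among the $k$ paths $\gamma_1,\ldots,\gamma_k$ at least one pair must share a vertex, contradicting disjointness. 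Hence only $\sigma=\mrm{id}$ contributes, and the remaining sum is precisely
\begin{equation*}
\sum_{\substack{(\gamma_1,\ldots,\gamma_k)\ \text{vertex-disjoint} \\ \gamma_i\colon \mb u_i \to \mb v_i}} \prod_{i=1}^k \prod_{\mb w\in \gamma_i} X_{\mb w} \;=\; T([\mb u_1,\ldots,\mb u_k],[\mb v_1,\ldots,\mb v_k]),
\end{equation*}
by the definition \eqref{e.T definition-sep}.

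The only mildly delicate step is verifying that the pairwise intersection hypothesis rules out disjoint tuples for every non-identity $\sigma$, not just for transpositions. For a general $\sigma$ one reduces to a transposed pair via an inversion $(i,j)$ of $\sigma$: in such an inversion, the targets of $\gamma_i$ and $\gamma_j$ are swapped relative to the ``sorted'' arrangement of $\mb v_\bullet$ implicit in the hypothesis (and explicit in the examples given before \eqref{e.T definition-sep}), and the same planar/topological argument that gives the pairwise hypothesis forces $\gamma_i$ and $\gamma_j$ to meet. I anticipate this is the main obstacle to writing the argument cleanly, but it is handled by the standard observation for planar up-right paths with endpoints in general position.
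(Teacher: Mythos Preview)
Your proposal is correct and takes essentially the same approach as the paper: both invoke the Lindstr\"om--Gessel--Viennot lemma for the first equality and the definition $Z_1=T_1$ for the second. The paper simply cites \cite[Corollary~2]{gessel1989determinants} (noting the standard reduction from edge to vertex weights), whereas you spell out the underlying sign-reversing involution; the ``delicate step'' you flag is exactly the nonpermutability hypothesis of that corollary, which holds for up-right paths on $\Z^2$ under the stated endpoint condition.
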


We mention that \cite{gessel1989determinants} works in a setting in which weights are assigned to edges rather than vertices, but the statement nevertheless applies to our setting by an appropriate procedure of assigning the vertex weights to the edges; see the brief discussion in \cite[Section 2.2.4]{C21} for details. We also mention that \cite[Corollary 2]{gessel1989determinants} applies to all directed acyclic graphs, expressing multi-path disjoint partition functions as determinants of matrices whose entries are given by single path partition functions.

\subsection{An extended invariance identity}\label{s.extended identity}
We work on the domain $\intint{1,m}\times\intint{1,n}$. Here, one should think of $m$ as being much larger than $n$, though this is not formally imposed or needed.

For the convenience of notation, denote $T_0((1,1), (m,n))=1$, and $(1,1)^0=(m,n)^0=\emptyset$, and $(1,1)^k= [(1,1), \ldots, (k,1)]$, $(m,n)^k=[(m,n), \ldots, (m,n-k+1)]$, for each $k\in \N$. 

Recall that Proposition~\ref{thm:invnyc} equated the disjoint polymer partition functions $T$ to partition functions in the line ensemble $Y$, under the condition that the starting points were on the bottom line and the ending points on the top line. The aim of this section is an alternate representation for the partition function where the endpoint is \emph{not} on the top line, also in terms of partition functions in the line ensemble. This is Proposition~\ref{p.extended invariance}, which we state just ahead after introducing some notation. This result and its proof hold deterministically for all non-negative vertex weights, not just i.i.d.\ inverse-gamma weights.

\subsubsection{Notation} We introduce some useful notation.
Define the index sets $J_1[m,n]$, $J_2[m,n]$, and $J[m,n]$ by (see Figure~\ref{f.J sets})
\begin{equation}\label{e.J definition}
\begin{split}
   J_1[m,n] &:= \left\{(i, j) \in \Z^2 : 1\le j \le \min\{i,n\}, 1\le i\le m\right\},\\
 J_2[m,n] &:= \left\{(i, j) \in \Z^2 : 1\le j \le \min\{m+n+1-i,m\}, m+1\le i\le m+n\right\}, \quad\text{and}\\
 J[m,n] &:= J_1[m,n] \cup J_2[m,n].
\end{split}
\end{equation}

Now, for any $m,n \in \N$, and each $(i,j)\in J_1[m,n]$, denote (see Figure~\ref{f.Z definition})
\begin{equation}\label{e.Z^m,n first}
Z^{m,n}_j(i) = Z_j((1, 1), (i, n)),
\end{equation}
and for each $(i,j)\in J_2[m,n]$, denote
\begin{equation}\label{e.Z^m,n second}
Z^{m,n}_j(i) = Z_j((1, i-m), (m, n)).
\end{equation}
These are, respectively, the line ensembles associated to the partition functions indicated in the left and right panels of Figure~\ref{f.Z definition}.

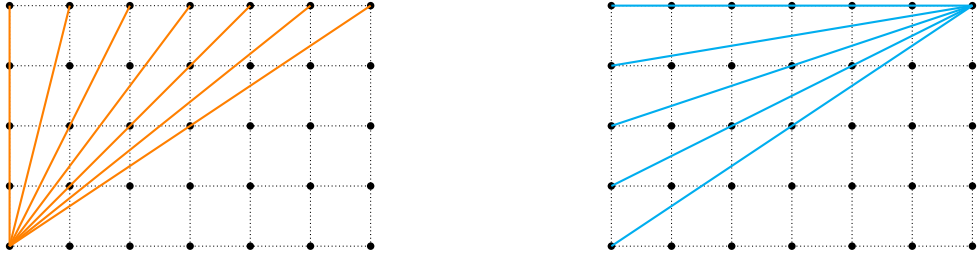
\begin{figure}
\begin{tikzpicture}[scale=0.8]
\newcommand{\thewidth}{6}
\newcommand{\theheight}{4}

\draw[densely dotted] (0,0) grid (\thewidth, \theheight);

\foreach \y in {0, ..., \theheight}
\foreach \x in {0,...,\thewidth}
{
  \node[fill, circle, inner sep = 1pt] at (\x,\theheight-\y) {};
}

\foreach \x in {0,...,\thewidth}
  \draw[orange, thick] (0,0) -- (\x,\theheight);

\begin{scope}[shift={(10,0)}]
  \draw[densely dotted] (0,0) grid (\thewidth, \theheight);

  \foreach \y in {0, ..., \theheight}
  \foreach \x in {0,...,\thewidth}
  {
  \node[fill, circle, inner sep = 1pt] at (\x,\theheight-\y) {};
  }

  \foreach \y in {0,...,\theheight}
    \draw[cyan, thick] (\thewidth,\theheight) -- (0,\y);
\end{scope}
\end{tikzpicture}
\caption{Left: the orange lines connect the common starting point and varying ending points of the paths whose partition functions determine the values of $Z^{m,n}_j(i)$ for $(i,j)\in J_1[m,n]$, and therefore also of $\{Y_{\mb v}\}_{\mb v\in V_1[m,n]}$. Right: the blue lines connect the varying starting points and common ending point of the paths whose partition functions determine the values of $Z^{m,n}_j(i)$ for $(i,j)\in J_2[m,n]$, and therefore also of $\{Y_{\mb v}\}_{\mb v\in V_2[m,n]}$. Note that in both cases the partition functions of the paths joining $(1,1)$ and $(m,n)$ are included, which implies that $Z^{m,n}_j(m) = Z^{m,n}_j(m+1)$ for all $j\in\intint{1,n}$.}\label{f.Z definition}
\end{figure}

We wish to associate weights $Y$ to the vertices of the line ensembles defined by $Z^{m,n}$, but there is a minor annoyance in the indexing. Namely, $Z^{m,n}_j$'s values should be associated to the $j$\textsuperscript{th} line from the top of the line ensemble, which in the usual coordinates has $y$-coordinate $n-j+1$. To streamline the presentation, we define $V_1[m,n]$ and $V_2[m,n]$ ($V$ for vertices) by
\begin{align*}
V_1[m,n] &= \Bigl\{(i,j): (i,n+1-j)\in J_1[m,n]\Bigr\} \quad\text{and}\\
V_2[m,n] &= \Bigl\{(i,j): (i,n+1-j)\in J_2[m,n]\Bigr\}.
\end{align*}

For each $(i, j)\in V_1[m,n]$, let
\begin{equation}\label{e.Y definition 2}
Y_{(i,j)} := \frac{Z_{n+1-j}^{m,n}(i)}{Z_{n+1-j}^{m,n}(i-1)}
\end{equation}
where we use the convention that $Z_j^{m,n}(0)=1$.  Next, for each $(i,j) \in V_2[m,n]$, let
\begin{equation}\label{e.Y definition 2 ext}
Y_{(i,j)} := \frac{Z_{n+1-j}^{m,n}(i)}{Z_{n+1-j}^{m,n}(i+1)}.
\end{equation}
Thus $Y|_{V_1[m,n]}$ are the multiplicative increments of the line ensemble associated to the partition functions indicated on the left panel of Figure~\ref{f.Z definition}, while $Y|_{V_2[m,n]}$ is that associated to the right panel.

\subsubsection{Invariance identity for endpoints on top line}

In this subsection we state the previously known invariance identity for when the endpoints are on the top line, before stating the extended version in Section~\ref{s.partition functions and invariance identity}. We first define a partition function in the line ensemble $Y$ restricted to $V_1[m,n]$. 

For any $k\in\N$ and any $\mb u_1, \ldots,  \mb u_k, \mb v_1, \ldots,  \mb v_k \in V_1[m,n]$ satisfying the condition above \eqref{e.T definition-sep}, we define
\begin{equation}\label{e.S definition}
S\bigl([\mb u_1, \ldots, \mb u_k], [\mb v_1, \ldots,  \mb v_k]\bigr) := \sum_{(\gamma_1, \ldots, \gamma_k)} \prod_{\mb w\in \cup_{i=1}^k\gamma_i} Y_{\mb w},
\end{equation}
where the summation is over all tuples of disjoint up-right paths $(\gamma_1, \ldots, \gamma_k)$, with $\gamma_i$ from $\mb u_i$ to $\mb v_i$ for $i\in\intint{1,k}$. Since the $Y$ are multiplicative increments of $Z$, the above definition is a partition function in the environment given by $Y$ and induced by the line ensemble, analogous to \eqref{e.T definition-sep}.

It will be convenient to write, for $x\in\N$, $(x,1)^{\shortuparrow} = (x,\max\{n-x+1, 1\})$. See Figure~\ref{f.coords}.

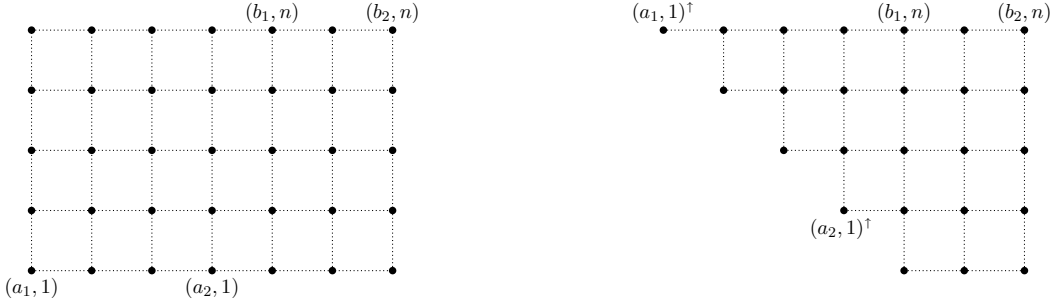
\begin{figure}[t]
\begin{tikzpicture}[scale=0.8]
\draw[densely dotted] (0,0) grid (6, 4);

\newcommand{\thewidth}{6}
\newcommand{\theheight}{4}

\foreach \y in {0, ..., \theheight}
\foreach \x in {0,...,\thewidth}
{
  \node[fill, circle, inner sep = 1pt] at (\x,\theheight-\y) {};
}

\node[scale=0.7, anchor=north] at (0,0) {$(a_1,1)$};
\node[scale=0.7, anchor=north] at (3,0) {$(a_2,1)$};
\node[scale=0.7, anchor=south] at (4,4) {$(b_1,n)$};
\node[scale=0.7, anchor=south] at (6,4) {$(b_2,n)$};

\begin{scope}[shift={(10.5,0)}]
\foreach \y in {0, ..., \theheight}
\foreach \x in {\y,...,\thewidth}
{
  \ifnum \x < \thewidth
    \draw[densely dotted] (\x,\theheight-\y) -- ++(1,0);
  \fi
  \node[fill, circle, inner sep = 1pt] at (\x,\theheight-\y) {};

  \ifnum \x > \y 
    \ifnum \y < \theheight
    \draw[densely dotted] (\x,\theheight-\y) -- ++(0,-1);
    \fi
  \fi
}

\node[scale=0.7, anchor=south] at (0,4) {$(a_1, 1)^{\shortuparrow}$};
\node[scale=0.7, anchor=north east] at (3.6,1) {$(a_2, 1)^{\shortuparrow}$};
\node[scale=0.7, anchor=south] at (4,4) {$(b_1,n)$};
\node[scale=0.7, anchor=south] at (6,4) {$(b_2,n)$};
\end{scope}
\end{tikzpicture}
\caption{A depiction of the coordinates in Proposition~\ref{thm:invnyc}.}\label{f.coords}
\end{figure}

\begin{proposition}[\protect{\cite[Theorem 1.7]{NY}, \cite[Theorem 1.1]{C21}}]   \label{thm:invnyc}
Fix any set of non-negative weights $\{X_{\mb v}\}_{\mb v\in \Z^2}$ and define $T$ and $S$ as in \eqref{e.T definition-sep} and \eqref{e.S definition} with these weights. Then for any $k\in\N$, integers $1\le a_1 < \ldots <a_k < b_1 <  \ldots  < b_k$, and $n\ge 2$, deterministically, 
\begin{align*}
T\bigl([(a_1, 1), \ldots , (a_k, 1)], [(b_1, n), \ldots, (b_k, n)]\bigr)
&= S\bigl([(a_1, 1)^{\shortuparrow}, \ldots , (a_k, 1)^{\shortuparrow}], [(b_1, n), \ldots,  (b_k, n)]\bigr).
\end{align*}
\end{proposition}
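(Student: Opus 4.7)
The plan is to reduce, via the Lindström--Gessel--Viennot (LGV) principle, to a single-path identity, and then establish that identity by induction on $a$. First, Lemma~\ref{lem:disj-alter-def} applied in the original $(\Z^2, X)$-weighted DAG gives
\[
T\bigl([(a_1,1),\ldots,(a_k,1)],[(b_1,n),\ldots,(b_k,n)]\bigr) = \det\bigl(T_1((a_i,1),(b_j,n))\bigr)_{i,j=1}^k.
\]
Applying the same principle to the weighted DAG $(V_1[m,n], Y)$ would similarly yield
\[
S\bigl([(a_1,1)^{\shortuparrow},\ldots,(a_k,1)^{\shortuparrow}],[(b_1,n),\ldots,(b_k,n)]\bigr) = \det\bigl(S_1((a_i,1)^{\shortuparrow},(b_j,n))\bigr)_{i,j=1}^k,
\]
once the required non-intersection hypothesis is checked: for $i\neq j$, every up-right path in $V_1[m,n]$ from $(a_i,1)^{\shortuparrow}$ to $(b_j,n)$ must meet every up-right path from $(a_j,1)^{\shortuparrow}$ to $(b_i,n)$. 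Since the starts $(a_i,1)^{\shortuparrow}$ lie on the lower-left staircase boundary of $V_1[m,n]$ in the same cyclic order as the ends $(b_j,n)$ lie along its top edge, this follows by planarity. Matching the two determinantal expressions reduces the proposition to the single-path identity
\[
T_1((a,1),(b,n)) = S_1((a,1)^{\shortuparrow},(b,n)), \qquad 1\le a\le b\le m.
\]

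To prove the single-path identity, I would induct on $a$. The base case $a=1$ is immediate by telescoping: the unique path in $V_1[m,n]$ from $(1,1)^{\shortuparrow}=(1,n)$ to $(b,n)$ is horizontal, and $\prod_{i=1}^{b} Y_{(i,n)}= Z_1^{m,n}(b) = T_1((1,1),(b,n))$ by the definition of $Y$ on the top row. For the inductive step, I would use the already-proven $k=a$ case of LGV in the original environment, which identifies
\[
T_a((1,1),(b,n)) = \prod_{j=1}^{a} Z_j^{m,n}(b) = \det\bigl(T_1((i,1),(b-a+j,n))\bigr)_{i,j=1}^{a},
\]
then expand along the last column and solve for $T_1((a,1),(b,n))$ in terms of $T_1((i,1),\cdot)$ for $i<a$, the cofactors (which are $T$-type partition functions with $a-1$ paths, to which the multi-path identity already applies by strong induction), and the $Z_j^{m,n}(b)$. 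On the $S_1$ side, I would decompose a path $\gamma$ from $(a,1)^{\shortuparrow}$ to $(b,n)$ in $V_1[m,n]$ according to the column at which it first departs the diagonal boundary, use the telescoping of $Y$ along horizontal and vertical segments to rewrite each contribution as a product of ratios $Z_j^{m,n}(\cdot)/Z_j^{m,n}(\cdot-1)$, and then match the resulting expression term by term with the expansion above using the inductive hypothesis.

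The main obstacle is the inductive step for the single-path identity: the multi-path reduction via LGV is formal, but the single-path identity is essentially the combinatorial heart of the geometric RSK correspondence, asserting that the polymer partition functions in the $(V_1[m,n],Y)$ picture reconstruct those in the original environment. Matching the two sides requires careful bookkeeping of how the telescoping of $Y$-weights along segments of a path in $V_1[m,n]$ corresponds to the minors appearing in the determinantal expansion of $T_a$. Once this is in place, the multi-path identity of the proposition follows purely from LGV.
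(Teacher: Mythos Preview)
The paper does not prove Proposition~\ref{thm:invnyc}; it is quoted as a known result from \cite{NY} and \cite{C21} and used as a black box. So there is no ``paper's proof'' to compare against, and your task was really to supply an independent argument.

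Your reduction to the single-path identity via LGV is correct and standard, and the base case $a=1$ is fine. However, you yourself flag that the inductive step is ``the main obstacle'' and then stop short of carrying it out: you describe a plan (expand $\det(T_1((i,1),(b-a+j,n)))$ along the last column, decompose $S_1$-paths by where they leave the staircase boundary, and match) but do not execute the matching. This is precisely the combinatorial content of the Noumi--Yamada identity, and it does not fall out by simple telescoping. In particular, when you solve the determinant expansion for $T_1((a,1),(b,n))$, the cofactors are $(a-1)\times(a-1)$ minors whose starting points are $\{(i,1):i\neq a\}$, not $\{(1,1),\ldots,(a-1,1)\}$; your inductive hypothesis (which is for single paths starting at $(i,1)$ with $i<a$, or for packed multi-paths starting at $(1,1)^{a-1}$) does not directly apply to these. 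On the $S$ side, decomposing by the column of first departure from the diagonal produces sums indexed differently from the cofactor expansion, and lining the two up requires additional identities among the $Z_j^{m,n}$ that you have not stated or proved.

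In short, the architecture (LGV reduction to $k=1$, then induction) is reasonable and is one route taken in the literature, but the proposal as written is a sketch that stops exactly at the hard step. A complete proof would either fill in the recursive matching carefully (as in \cite{NY}) or go through the gRSK bijection and its column-insertion description (as in \cite{COSZ,C21}).
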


Next we move to giving some setup to state the extended invariance identity. We will need to introduce a mildly larger graph which connects the vertex sets $V_1[m,n]$ and $V_2[m,n]$ and their associated line ensembles, which we do in the next subsection.

\subsubsection{Underlying graph}

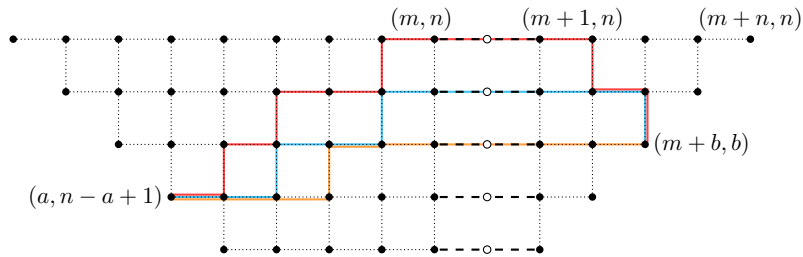
\begin{figure}[b]
\begin{tikzpicture}[scale=0.7]

\node[anchor = south east, scale=0.8] at (9.5, 3) {$(m, n)$}; 
\node[anchor = east, scale=0.8] at (4, 0) {$(a, n-a+1)$};

\draw[red, thick, opacity=0.75] (4,0.045) -- ++(1,0) -- ++(0,0.955) -- ++(1,0) -- ++(0,1) -- ++(2,0) -- ++(0,1) -- ++(3,0);
\draw[red, thick, opacity=0.75] (11,3) -- ++(1,0) -- ++(0,-0.955) -- ++(1.045,0) -- ++(0,-1);

\draw[cyan, thick, opacity=0.75] (4,0) -- ++(2,0) -- ++(0,1) -- ++(2,0) -- ++(0,1) -- ++(3,0);
\draw[cyan, thick, opacity=0.75] (11,2) -- ++(2,0) -- ++(0,-0.955);

\draw[orange, thick, opacity=0.75] (4,-0.045) -- ++(3,0) -- ++(0,1) -- ++(1,0) -- ++(0,0.045) -- ++(3,0);
\draw[orange, thick, opacity=0.75] (11,1) -- ++(2,0);

\newcommand{\thewidth}{8}
\newcommand{\theheight}{4}

\begin{scope}[shift={(1,-1)}]
\foreach \y in {0, ..., \theheight}
\foreach \x in {\y,...,\thewidth}
{
  \ifnum \x < \thewidth
    \draw[densely dotted] (\x,\theheight-\y) -- ++(1,0);
  \fi
  \node[fill, circle, inner sep = 1pt] at (\x,\theheight-\y) {};

  \ifnum \x > \y 
    \ifnum \y < \theheight
    \draw[densely dotted] (\x,\theheight-\y) -- ++(0,-1);
    \fi
  \fi
}
\end{scope}

\foreach \y in {0, ..., \theheight}
{
  \draw[dashed, dash phase = 4pt, thick] (9, \y-1) -- ++(1,0);
   \draw[dashed, dash phase = 3pt, thick] (10, \y-1) -- ++(1,0);
  \node[draw, fill=white, circle, inner sep = 1pt] at (10,\y-1) {};
}

\renewcommand{\thewidth}{4}
\renewcommand{\theheight}{4}

\begin{scope}[shift={(11,-1)}]
\foreach \y in {0, ..., \theheight}
\foreach \x in {\y,...,\thewidth}
{
  \ifnum \x < \thewidth
    \draw[densely dotted] (\thewidth-\x,\theheight-\y) -- ++(-1,0);
  \fi
  \node[fill, circle, inner sep = 1pt] at (\thewidth-\x,\theheight-\y) {};

  \ifnum \x > \y 
    \ifnum \y < \theheight
    \draw[densely dotted] (\thewidth-\x,\theheight-\y) -- ++(0,-1);
    \fi
  \fi
}

\node[anchor = south west, scale=0.8] at (-0.5, 4) {$(m+1, n)$}; 
\node[anchor = south, scale=0.8] at (4, 4) {$(m+n, n)$}; 
\node[anchor = west, scale=0.8] at (2, 2) {$(m+b, b)$};

\end{scope}
\end{tikzpicture}
\caption{A depiction of the graph underlying the line ensembles. Up to swapping the $y$-coordinate $j$ by $n+1-j$, these also depict the coordinates associated to the index sets $J_1[m,n]$ and $J_2[m,n]$, respectively. The paths in red, blue, and orange are examples of the paths contributing to the partition function $S([(a,1)^{\shortuparrow}], [(m+b,b)])$; they go up-right in the left line ensemble, and down-right in the right line ensemble.}\label{f.J sets}
\end{figure}

To combine the line ensembles associated to $V_1[m,n]$ and $V_2[m,n]$ as just mentioned, we introduce a vertical line of auxiliary vertices with associated weights (at this stage this may seem artificial, but it will make the extended invariance identity Proposition~\ref{p.extended invariance} cleaner to state); these are the vertices denoted by unfilled circles in Figure~\ref{f.J sets}. We label these vertices as $(m+\frac{1}{2}, j)$ for $j\in\intint{1,n}$ and the weight $Y_{(m+\frac{1}{2},j)}$ associated to $(m+\frac{1}{2},j)$ is given by
\begin{equation}\label{e.Y new vertex weight}
Y_{(m+\frac{1}{2},j)} = Z^{m,n}_{n+1-j}(m)^{-1}.
\end{equation}

Next we specify the edges and their orientation in the graph underlying the line ensemble $Y$, which explains the joining of the two that was mentioned just above. See Figure~\ref{f.J sets} for a depiction. In words, the edges in the left line ensemble are directed right and up, while the edges in the right line ensemble are directed right and \emph{down}, and there is a right-directed edge connecting $(m,j)$ with $(m+\frac{1}{2},j)$ as well as $(m+\frac{1}{2},j)$ with $(m+1,j)$.

More formally, for $(i,j)\in V_1[m,n]$, $(i,j)$ is connected with $(i+1,j)$ and $(i,j+1)$ in that direction, so long as the corresponding latter endpoint is a member of $V_1[m,n]$. Each $(i,j)\in V_2[m,n]$ is connected with $(i+1,j)$ and $(i,j-1)$ in that direction, so long as the corresponding latter endpoint is a member of $V_2[m,n]$. Finally, we connect $(m,j)$ with $(m+\frac{1}{2},j)$ and $(m+\frac{1}{2},j)$ with $(m+1,j)$, in the mentioned direction in both cases.

\subsubsection{Partition functions}\label{s.partition functions and invariance identity}
With this preparation we may define partition functions in this combined line ensemble and then give the extended invariance identity, Proposition~\ref{p.extended invariance}. First, by paths we simply mean paths in the directed graph just specified. Denote the vertex set of this graph by 
$$V[m,n] := V_1[m,n]\cup V_2[m,n] \cup \left\{(m+\tfrac{1}{2}, j) : j\in\intint{1,\min(m,n)}\right\}.$$
Then for any $k\in\N$ and any $\mb u_1, \ldots,  \mb u_k, \mb v_1, \ldots,  \mb v_k \in V$, define
\begin{align}\label{e.S definition extended}
S([\mb u_1, \ldots, \mb u_k], [\mb v_1, \ldots, \mb v_k]) := \sum_{(\gamma_1, \ldots, \gamma_k)} \prod_{\mb w\in \cup_{i=1}^k\gamma_i} Y_{\mb w},
\end{align}
with the value of the empty sum set to 0 by convention. 
Note that this indeed agrees with the definition of $S$ given in \eqref{e.S definition} when $\mb u_i, \mb v_i \in V_1[m,n]$.

\begin{remark}\label{r.properties of paths in joint line ensemble}
We make a few simple observations about the nature of paths in this joint line ensemble to familiarize the reader; see Figure~\ref{f.J sets}. By definition, they go up-right in the left part of the ensemble, and down-right in the right part. As a result, the path cannot cross from the left to the right at a line below that of the left or right endpoints. Finally, to move from the left side to the right side, it must necessarily pass through all three of $(m,j)$, $(m+\frac{1}{2}, j)$, and $(m+1, j)$ for a unique $j\in\intint{1,\min(m,n)}$, as there are no vertical edges in the vertical line $x=m+\frac{1}{2}$.
\end{remark}
Next we state the extended invariance identity, which (in the zero temperature setting) we first learned from Duncan Dauvergne through private communication. For $j\in\intint{1,n}$, define $(m,j)^{\shortrightarrow} = (m+j,j)$. Recall also that for $i\in\intint{1,m}$, $(i,1)^\shortuparrow = (i,n-i+1)$.

\begin{proposition}[Extended invariance identity]\label{p.extended invariance}

  Let $k\in\N$ and $\ell\in\N\cup\{0\}$ with $0\leq \ell\leq k$ (here, $\ell$ will be the number of endpoints on the top boundary). Let $1\leq a_1 < \ldots < a_k \leq m$, $1\leq b_1 <  \ldots < b_\ell \leq m$, $n > b_{\ell+1} >  \ldots  > b_k \geq 1$. Then
  \begin{align*}
  \MoveEqLeft[8]
  T\left([(a_1,1), \ldots, (a_k,1)], [(b_1, n), \ldots, (b_\ell, n), (m, b_{\ell+1}), \ldots , (m, b_{k})]\right)\\
  &= S\left([(a_1,1)^{\shortuparrow}, \ldots, (a_k,1)^{\shortuparrow}], [(b_1, n), \ldots, (b_\ell, n), (m, b_{\ell+1})^{\shortrightarrow}, \ldots , (m, b_{k})^{\shortrightarrow}]\right).
  \end{align*}
\end{proposition}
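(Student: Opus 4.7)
The plan is to reduce the identity to a single-path ($k=1$) statement via a Lindstr\"om--Gessel--Viennot (LGV) argument on both sides. Lemma~\ref{lem:disj-alter-def} expresses the LHS as $\det\bigl(T_1((a_i,1),\mb v_j)\bigr)_{i,j=1}^k$, where $\mb v_j$ is the $j$th endpoint; the non-intersecting condition is satisfied because the bottom-row starting points are ordered by $a_1<\cdots<a_k$ and the endpoints $(b_1,n),\ldots,(b_\ell,n),(m,b_{\ell+1}),\ldots,(m,b_k)$ are traversed clockwise around the top-and-right portion of the boundary. I would then verify that an analogous LGV identity holds for the joint line ensemble, expressing $S(\ldots)$ as the determinant of single-path joint-ensemble partition functions $S\bigl((a_i,1)^{\shortuparrow},\mb v_j^{(\shortrightarrow)}\bigr)$; the required non-intersection under any non-identity permutation follows from a case analysis using the forced up-right-in-$V_1$/down-right-in-$V_2$ structure and the observation (Remark~\ref{r.properties of paths in joint line ensemble}) that any $V_1\to V_2$ transit must traverse the unique triple $(m,j),(m+\tfrac{1}{2},j),(m+1,j)$. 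These two LGV reductions collapse the proposition to the single-path identity $T_1((a,1),\mb v)=S\bigl((a,1)^{\shortuparrow},\mb v^{(\shortrightarrow)}\bigr)$.

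For $\mb v=(b,n)$ on the top, this is exactly the $k=1$ case of Proposition~\ref{thm:invnyc}. The new content is $\mb v=(m,b)$ on the right, i.e., $T_1((a,1),(m,b))=S\bigl((a,1)^{\shortuparrow},(m+b,b)\bigr)$. To prove this, every joint-ensemble path $\gamma$ from $(a,n-a+1)$ to $(m+b,b)$ has a unique crossing column $j\in\intint{\max(b,n-a+1),n}$, which gives the decomposition
$$S\bigl((a,1)^{\shortuparrow},(m+b,b)\bigr)=\sum_{j}S^L\bigl((a,1)^{\shortuparrow},(m,j)\bigr)\,Y_{(m+\frac{1}{2},j)}\,S^R\bigl((m+1,j),(m+b,b)\bigr),$$
where $S^L$ and $S^R$ denote single-path partition functions in $V_1$ and $V_2$, respectively. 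The auxiliary weight $Y_{(m+\frac{1}{2},j)}=Z^{m,n}_{n+1-j}(m)^{-1}$ is tailor-made so that it cancels the common boundary factor $Z^{m,n}_{n+1-j}(m)$ arising both from $S^L$ ending at column $m$ of line $j$ in $V_1$ and from $S^R$ starting at column $m+1$ of line $j$ in $V_2$.

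What remains is to identify $S^L$ and $S^R$ with polymer data in the original environment and to verify that the sum over the crossing column $j$ telescopes to $T_1((a,1),(m,b))$. For this I would apply Proposition~\ref{thm:invnyc} with $k=1$ in the left triangular region $V_1$ and its reflected analog in the right triangular region $V_2$ (which is structurally the left line ensemble of the $180^\circ$-rotated environment anchored at $(m,n)$), combined with the telescoping structure of $Y_{(i,j)}=Z^{m,n}_{n+1-j}(i)/Z^{m,n}_{n+1-j}(i-1)$ along horizontal runs of any $V_1$- or $V_2$-path. After the auxiliary cancellation, each term in the sum over $j$ should correspond to the subset of paths in the original environment from $(a,1)$ to $(m,b)$ that first enter the column $x=m$ at height $y=j$, so the sum reconstructs $T_1((a,1),(m,b))$. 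I anticipate the main obstacle to lie in the algebraic bookkeeping of this last step: interpreting the boundary conventions for $Z^{m,n}_j(i)$ near the anti-diagonals of $V_1$ and $V_2$ (where some $Z^{m,n}_j(i)$ vanish and apparent divisions by zero must be resolved via $Z^{m,n}_j(0)=1$ or absorbed into the indexing of the path sum), and a careful verification of the LGV non-intersection condition for the joint ensemble.
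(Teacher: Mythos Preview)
Your reduction to $k=1$ via LGV on both sides and the crossing-line decomposition of $S\bigl((a,1)^{\shortuparrow},(m,b)^{\shortrightarrow}\bigr)$ are both correct and match the paper's approach exactly. The gap is in your final step. Proposition~\ref{thm:invnyc} with $k=1$ only identifies $S^L\bigl((a,1)^{\shortuparrow},(m,j)\bigr)$ when $j=n$; for $j<n$ the endpoint is not on the top line of $V_1$, so that proposition does not apply, and the telescoping of $Y$'s along horizontal runs does not by itself express $S^L$ as a single-path polymer quantity. More seriously, your proposed interpretation ``each term in the sum over $j$ corresponds to paths in the original environment that first enter column $x=m$ at height $y=j$'' is incorrect: one can check already for $a=2$ that the $j$th summand is not of the form $T_1((a,1),(m-1,h))\cdot\prod X_{(m,\cdot)}$ for any $h$.

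What the paper does instead is a ``packing'' trick: to compute $S^L\bigl((a,1)^{\shortuparrow},(m,n-k+1)\bigr)$, it applies Proposition~\ref{thm:invnyc} with $k$ paths (not one), the first $k-1$ of which have starting points $(1,1)^{k-1,\shortuparrow}$ and endpoints $(m,n)^{k-1}$; these $k-1$ paths are frozen on the top $k-1$ lines and cancel between numerator and denominator, leaving
\[
S^L\bigl((a,1)^{\shortuparrow},(m,n-k+1)\bigr)=\frac{T([(1,1)^{k-1},(a,1)],[(m,n)^k])}{T_{k-1}((1,1),(m,n))},
\]
and similarly for $S^R$ via the rotated ensemble. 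The resulting sum over $k$ is then shown to telescope to $T_1((a,1),(m,b))$ not by a path decomposition but by a Desnanot--Jacobi determinant identity (Lemma~\ref{lem:cross-line}): each summand equals a difference of two ratios $T([(1,1)^{k-1},(a,1)],[(m,n)^{k-1},(m,b)])/T_{k-1}$, and the sum collapses. This algebraic telescoping is the missing idea in your sketch.
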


The conditions on $a_i$ and $b_i$ are merely to ensure planar ordering of the points in the primal environment so as to ensure that the partition function is non-zero. Observe that the same condition also ensures the planar ordering of the corresponding points in the line ensemble, thus guaranteeing that that partition function is also non-zero.

Before turning to the proof of Proposition~\ref{p.extended invariance}, we record a quick consequence of it that will be important for our overall arguments towards Theorem~\ref{thm:disj-BK-log-g-new}. To state it, for any integers $1\le a \le m$ and $1\le b \le n$, we define a function $F^{m,n}_{a,b}:\R_+^{J[m,n]}\to \R_+$, by
\begin{equation}\label{e.F definition}
\begin{split}
\MoveEqLeft[1]
F^{m,n}_{a,b}(\{Z^{m,n}_j(i)\}_{(i,j)\in J[m,n]}) := S([(a,1)^{\shortuparrow}], [(m, b)^{\shortrightarrow}]).
\end{split}
\end{equation}

\begin{corollary}  \label{corr:cross-line}
For any integers $1\le a \le m$ and $1\le b \le n$, 
\begin{equation}\label{e.new identity j=1}
  T_1((a,1), (m,b)) = F^{m,n}_{a,b}(\{Z^{m,n}_j(i)\}_{(i,j)\in J[m,n]}),
\end{equation}
and for any $2 \le a\le m$ and $1\le b\le n-1$,
\begin{equation}\label{e.new identity j=2}
\frac{T([(1,1), (a,1)], [(m,n), (m,b)])}{T_1((1,1), (m,n))} = F^{m-1,n-1}_{a-1,b}(\{Z^{m,n}_{j+1}(i+1)\}_{(i,j)\in J[m-1,n-1]}).
\end{equation}
\end{corollary}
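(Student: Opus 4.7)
To prove \eqref{e.new identity j=1}, I plan to apply Proposition~\ref{p.extended invariance} directly with $k=1$ and $\ell=0$, using that $T_1((a,1),(m,b)) = T([(a,1)],[(m,b)])$. This yields $T_1((a,1),(m,b)) = S([(a,1)^{\shortuparrow}],[(m,b)^{\shortrightarrow}])$, which is precisely $F^{m,n}_{a,b}(\{Z^{m,n}_j(i)\}_{(i,j)\in J[m,n]})$ by the definition \eqref{e.F definition}.

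For \eqref{e.new identity j=2}, the plan is to apply Proposition~\ref{p.extended invariance} with $k=2$ and $\ell=1$, giving
\[
T([(1,1),(a,1)],[(m,n),(m,b)]) = S\bigl([(1,1)^{\shortuparrow},(a,1)^{\shortuparrow}], [(m,n),(m,b)^{\shortrightarrow}]\bigr),
\]
which is a sum over pairs $(\gamma_1,\gamma_2)$ of vertex-disjoint paths in the joint line ensemble for the $(m,n)$-size environment. Here $(1,1)^{\shortuparrow}=(1,n)$, so $\gamma_1$ runs from $(1,n)$ to $(m,n)$ with both endpoints on the top line; by Remark~\ref{r.properties of paths in joint line ensemble}, $V_1[m,n]$-paths can only move up or right, so $\gamma_1$ is forced to traverse the entire top line, contributing the telescoping weight $\prod_{i=1}^{m} Y_{(i,n)} = Z^{m,n}_1(m) = T_1((1,1),(m,n))$. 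After dividing by this factor, the left-hand side becomes a sum over $\gamma_2$ alone: paths from $(a,1)^{\shortuparrow}$ to $(m,b)^{\shortrightarrow}=(m+b,b)$ that avoid the entire top line $y=n$, as well as the auxiliary vertex $(m+\tfrac{1}{2},n)$, which is inaccessible to $\gamma_2$ since its only incoming edge originates at $(m,n)\in\gamma_1$.

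The key step is then to identify such $\gamma_2$ with paths in the smaller $(m-1,n-1)$-size joint line ensemble via the horizontal shift $(i,j)\mapsto(i+1,j)$. A direct check from \eqref{e.J definition} and the analogous definitions of $V_1$, $V_2$, and the auxiliary column shows that this shift sends $V[m-1,n-1]$ bijectively onto $V[m,n]\setminus\bigl(\{(i,n):1\le i\le m\}\cup\{(m+\tfrac{1}{2},n)\}\bigr)$, which is exactly the set of vertices available to $\gamma_2$. Moreover, under the substitution $Z^{m-1,n-1}_j(i)\mapsto Z^{m,n}_{j+1}(i+1)$, the induced weights $\tilde Y$ of the shifted smaller ensemble satisfy $\tilde Y_{(i,j)} = Y_{(i+1,j)}$; this follows by direct calculation from \eqref{e.Y definition 2}, \eqref{e.Y definition 2 ext}, and \eqref{e.Y new vertex weight}, using the identity $Z^{m,n}_{j+1}(m)=Z^{m,n}_{j+1}(m+1)$ to handle the auxiliary-column vertex. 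Thus the sum over $\gamma_2$ equals $S([(a-1,1)^{\shortuparrow}],[(m-1,b)^{\shortrightarrow}])$ in the $(m-1,n-1)$-size line ensemble with these substituted values, which by \eqref{e.F definition} is precisely $F^{m-1,n-1}_{a-1,b}(\{Z^{m,n}_{j+1}(i+1)\}_{(i,j)\in J[m-1,n-1]})$. I expect the main obstacle to be the careful bookkeeping required to verify both the shift bijection and the weight identification; both are routine once one unpacks the definitions, but the index-juggling between the $J$- and $V$-coordinate conventions, the telescoping of the top-line factor, and the inaccessibility of $(m+\tfrac{1}{2},n)$ for $\gamma_2$ all need to be handled simultaneously.
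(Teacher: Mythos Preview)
Your approach is correct and essentially the same as the paper's: both apply Proposition~\ref{p.extended invariance} with $k=2$, $\ell=1$, observe that $\gamma_1$ is forced to occupy the entire top line with telescoping weight $T_1((1,1),(m,n))$, and then identify the remaining constrained single-path sum with $F^{m-1,n-1}_{a-1,b}$ via the horizontal shift $\mb v\mapsto\mb v+(1,0)$ and the weight substitution $Z^{m-1,n-1}_j(i)\mapsto Z^{m,n}_{j+1}(i+1)$. The paper carries this out by first decomposing the sum over $\gamma_2$ according to the level at which it crosses from $V_1$ to $V_2$ and then matching terms, whereas you argue directly via the global bijection; these are two presentations of the same computation.

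One small bookkeeping correction: your stated image of the shift, $V[m,n]\setminus(\{(i,n):1\le i\le m\}\cup\{(m+\tfrac12,n)\})$, is too large---the top row $(m+1,n),\ldots,(m+n,n)$ of $V_2[m,n]$ must also be removed. These vertices are likewise unreachable by $\gamma_2$ (within $V_2$ paths move only down and right, so the top row of $V_2$ can only be entered via $(m+\tfrac12,n)$), and the shift does not map onto them, so once this is noted your bijection and weight identification go through exactly as written.
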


Observe that the \eqref{e.new identity j=2} asserts that the lefthand side, a ratio of the partition function of two disjoint paths with that of a single one, is a single-path partition function in a smaller line ensemble obtained by removing the top line of the original line ensemble. Our argument for Theorem~\ref{thm:disj-BK-log-g-new} will make crucial use of this observation after writing out the relation of the laws of these two line ensembles.

\begin{proof}[Proof of Corollary~\ref{corr:cross-line}]
First, \eqref{e.new identity j=1} is simply a restatement of the $k=1$ case of Proposition~\ref{p.extended invariance}. Next, again by Proposition~\ref{p.extended invariance},
\begin{align*}
T([(1,1), (a,1)], [(m,n), (m,b)]) = S([(1,1)^{\shortuparrow}, (a,1)^{\shortuparrow}], [(m,n), (m,b)^{\shortrightarrow}]).
\end{align*}
Observe that on the righthand side, there is a single path from $(1,n)$ to $(m,n)$. So the righthand side factorizes as 
$$S([(1,1)^{\shortuparrow}], [(m,n)])\cdot \sum_{k=2}^{\min\{a,n-b+1\}} S([(a,1)^{\shortuparrow}],[(m,n-k+1)])\cdot Z^{m,n}_{n-k+1}(m)^{-1}\cdot S([(m,n-k+1)], [(m,b)^{\shortrightarrow}]),$$
where the sum is precisely the partition function of a single path in $S$ from $(a,1)^{\shortuparrow}$ to $(m,b)^{\shortrightarrow}$ which cannot use the top line. This yields, by an application of Proposition~\ref{p.extended invariance} again, that 
\begin{align*}
\MoveEqLeft[2]
\frac{T([(1,1), (a,1)], [(m,n), (m,b)])}{T_1((1,1), (m,n))}\\
&= \sum_{k=2}^{\min\{a,n-b+1\}} S([(a,1)^{\shortuparrow}],[(m,n-k+1)])\cdot Z^{m,n}_{n-k+1}(m)^{-1}\cdot S([(m,n-k+1)], [(m,b)^{\shortrightarrow}]).
\end{align*}
Now, this can be thought of as the partition function of a single path in the smaller line ensemble consisting of only the bottom $n-1$ lines. We claim that it in fact exactly equals $F^{m-1,n-1}_{a-1,b}(\{Z^{m,n}_{j+1}(i+1)\}_{(i,j)\in J[m-1,n-1]})$. The proof is mainly keeping track of indices. More precisely, we first make the substitution $k\mapsto k+1$ in the previous display, along with the definition of $S$ from \eqref{e.S definition extended}, to write it as 
\begin{align}
\MoveEqLeft[14]
\sum_{k=1}^{\min\{a-1,n-b\}} S([(a,1)^{\shortuparrow}],[(m,n-k)])\cdot Z^{m,n}_{n-k}(m)^{-1}\cdot S([(m,n-k)], [(m,b)^{\shortrightarrow}])\label{e.intermediate to equal F}\\
&= \sum_{k=1}^{\min\{a-1,n-b\}} \left(\sum_{\gamma_k^+} \prod_{\mb v\in\gamma_k^+} Y_{\mb v}\right)\cdot Z^{m,n}_{n-k}(m)^{-1}\cdot \left(\sum_{\gamma_k^-} \prod_{\mb v\in\gamma_k^-} Y_{\mb v}\right),\nonumber
\end{align}
where $\gamma_k^+$ is summed over all up-right paths in the line ensemble from $(a,1)^\shortuparrow = (a,n-a+1)$ to $(m,n-k)$ and $\gamma_k^-$ is summed over all down-right paths in the line ensemble from $(m+1,n-k)$ to $(m,b)^{\shortrightarrow} = (m+b,b)$.

We will now show that \eqref{e.intermediate to equal F} equals $F^{m-1,n-1}_{a-1,b}(\{Z^{m,n}_{j+1}(i+1)\}_{(i,j)\in J[m-1,n-1]})$ by keeping track of the indices carefully. For this, first recall from \eqref{e.Y definition 2} and \eqref{e.Y definition 2 ext} that $Y_{(i,j)} = Z^{m,n}_{n-j+1}(i)/Z^{m,n}_{n-j+1}(i-1)$ when $(i,j)\in V_1[m,n]$ and $Y_{(i,j)} = Z^{m,n}_{n-j+1}(i)/Z^{m,n}_{n-j+1}(i+1)$ when $(i,j)\in V_2[m,n]$. Thus replacing instances of $Z^{m,n}_{n-j+1}(i)$ with $Z^{m,n}_{n-j+2}(i+1)$ and then (only in the subscript) replacing $n$ by $n-1$ amounts to replacing $Y_{\mb v}$ by $Y_{\mb v+(1,0)}$. Making use of this fact in the definition \eqref{e.F definition} of $F^{m,n}_{a,b}$ shows that 
\begin{align*}
\MoveEqLeft[8]
F^{m-1,n-1}_{a-1,b}\left(\{Z^{m,n}_{j+1}(i+1)\}_{(i,j)\in J[m-1,n-1]}\right)\\
&= \sum_{k=1}^{\min\{a-1,n-b\}} \left(\sum_{\tilde\gamma_k^+} \prod_{\mb v\in\tilde\gamma_k^+} Y_{\mb v +(1,0)}\right)\cdot Z^{m,n}_{n-k}(m)^{-1}\cdot \left(\sum_{\tilde\gamma_k^-} \prod_{\mb v\in\tilde\gamma_k^-} Y_{\mb v + (1,0)}\right),
\end{align*}
where $\tilde\gamma_k^+$ is summed over all up-right paths in the line ensemble from $(a-1,n-a+1)$ to $(m-1,n-k)$ and $\tilde\gamma_k^-$ is summed over all down-right paths in the line ensemble from $(m,n-k)$ to $(m-1+b,b)$. Doing the substitution $\mb v\mapsto \mb v - (1,0)$ in both products in the previous display then yields that it equals \eqref{e.intermediate to equal F}, completing the proof.
\end{proof}

Now we turn to the proof of Proposition~\ref{p.extended invariance}. Recall that $\ell$ is the number of the $k$ paths whose endpoints lie on the top boundary. The proof will reduce the proposition to the $k=1, \ell=0$ case, i.e., the case where we consider a single path's partition function where the endpoint is on the right boundary. We isolate that case as the following lemma, which we will prove in Section~\ref{s.extended invariance simple case} after giving the proof of Proposition~\ref{p.extended invariance} assuming it.

\begin{lemma}\label{l.extended invariance one path}
Let $1\leq a \leq m$, $1\leq b \leq n$. Then
  \begin{align*}
  T\left([(a,1)], [(m, b)]\right)
  = S\left([(a,1)^{\shortuparrow}, [(m, b)^{\shortrightarrow}]\right).
  \end{align*}
\end{lemma}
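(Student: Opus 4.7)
The plan is to prove Lemma~\ref{l.extended invariance one path} by decomposing the RHS along its unique bridge crossing, as described in Remark~\ref{r.properties of paths in joint line ensemble}, and then identifying each of the two resulting line-ensemble partition functions with primal partition functions via Proposition~\ref{thm:invnyc} together with an appropriate reflection symmetry.

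First, I would observe that every path in the joint line ensemble from $(a,1)^{\shortuparrow}$ to $(m,b)^{\shortrightarrow}$ passes through the bridge column $x = m + \tfrac12$ at a unique height $j \in \intint{\max(n{-}a{+}1, b),\, \min(m, n)}$. Decomposing the path as the concatenation of a $V_1$-piece, a bridge crossing, and a $V_2$-piece, and using $Y_{(m+\frac12,j)} = Z^{m,n}_{n+1-j}(m)^{-1}$, one obtains
\[
S([(a,1)^{\shortuparrow}], [(m,b)^{\shortrightarrow}]) = \sum_{j} S_{V_1}([(a,1)^{\shortuparrow}], [(m,j)]) \cdot Z^{m,n}_{n+1-j}(m)^{-1} \cdot S_{V_2}([(m+1,j)], [(m+b,b)]),
\]
where $S_{V_k}$ denotes the restriction of $S$ to paths lying in $V_k[m,n]$.

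Next, I would express each $S$-factor as a primal partition function. For the left factor, the case $j = n$ is exactly the $k=1$ version of Proposition~\ref{thm:invnyc}: $S_{V_1}([(a,1)^{\shortuparrow}], [(m,n)]) = T_1((a,1),(m,n))$, using that $(m,n) \in V_1[m,n]$ and that paths to it cannot use the bridge. For $j < n$, additional structure comes from the telescoping of the $Y$-values along the rightmost column of $V_1$; combining this telescoping with Proposition~\ref{thm:invnyc} applied to a restricted sub-ensemble yields an expression for $S_{V_1}([(a,1)^{\shortuparrow}], [(m,j)])$ as a product of $T_1((a,1),(m,n))$ and a ``shape'' factor involving $Z^{m,n}_{n+1-j}(m)$. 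For the right factor, an analogous identity follows from the $180^\circ$-reflection symmetry: under $X'_{(i,j)} := X_{(m+1-i,\, n+1-j)}$, the right line ensemble of $X$ becomes, after relabeling, the left line ensemble of $X'$, so Proposition~\ref{thm:invnyc} applies in $X'$ and translates back through the reflection to give $S_{V_2}([(m+1,j)], [(m+b,b)])$ in terms of a primal partition function of $X$.

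Substituting these two identifications into the bridge decomposition, the bridge weight $Z^{m,n}_{n+1-j}(m)^{-1}$ cancels a shared factor of $Z^{m,n}_1(m) = T_1((1,1),(m,n))$ that appears in both $S_{V_1}$ and $S_{V_2}$ (reflecting the fact that both ensembles share the corner vertex $(m,n)$ in their underlying gRSK picture), and the remaining sum over $j$ telescopes to $T_1((a,1),(m,b))$, which is the LHS. The main obstacle I anticipate is executing the reflection symmetry for the $V_2$-factor cleanly: while conceptually straightforward, it requires carefully matching vertex labels, path orientations, and the roles of ``starting'' versus ``ending'' boundaries between the original and reflected line ensembles, and verifying that the telescoping products along $V_2$ mesh precisely with the bridge weight to leave no residual factors. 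As a potentially more direct alternative, one could augment the original environment by appending $n-b$ columns to the right carrying suitable forcing weights (chosen so that the endpoint $(m,b)$ in $X$ corresponds to an endpoint on the top boundary of the augmented environment, at which Proposition~\ref{thm:invnyc} applies directly) and then argue that the augmented line ensemble degenerates to the joint line ensemble of $X$; but this introduces a separate layer of limiting bookkeeping that the bridge decomposition avoids.
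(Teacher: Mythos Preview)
Your bridge decomposition of $S([(a,1)^{\shortuparrow}], [(m,b)^{\shortrightarrow}])$ according to the crossing height is correct and is exactly what the paper does, as is your use of reflection to handle the $V_2$-factor. The gap is in your treatment of the $V_1$-factor and of the final summation.

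Your claim that $S_{V_1}([(a,1)^{\shortuparrow}], [(m,j)])$ factors as $T_1((a,1),(m,n))$ times a shape factor is false: a path in $V_1$ ending at $(m,j)$ with $j<n$ need not visit $(m,n)$ at all, so no such factorization is available, and ``telescoping along the rightmost column'' does not repair this. The paper's device is a \emph{packing trick}: with $k:=n+1-j$, adjoin $k-1$ auxiliary paths from $(1,1)^{k-1,\shortuparrow}$ to $(m,n)^{k-1}$, which are forced to lie straight across the top $k-1$ lines; their common contribution cancels in the ratio, and Proposition~\ref{thm:invnyc} (now applicable, since all endpoints lie on the top line) gives
\[
S_{V_1}\bigl([(a,1)^{\shortuparrow}], [(m,j)]\bigr) \;=\; \frac{T\bigl([(1,1)^{k-1}, (a,1)],[(m,n)^{k}]\bigr)}{T_{k-1}((1,1), (m,n))}.
\]
With the analogous formula for the $V_2$-factor, the resulting sum over $k$ does \emph{not} telescope in any elementary way; in particular your claimed cancellation of the $j$-dependent bridge weight $Z^{m,n}_{n+1-j}(m)^{-1}$ against a fixed factor $Z^{m,n}_1(m)$ is wrong for $j<n$. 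What is actually required is the determinantal identity of Lemma~\ref{lem:cross-line}, proved via the Desnanot--Jacobi identity applied to the $(k{+}1)\times(k{+}1)$ matrix of single-path partition functions. Both the packing trick and this determinantal input are substantive ingredients missing from your plan.
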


\begin{proof}[Proof of Proposition~\ref{p.extended invariance}]
We first observe that it suffices to prove the case of $k=1$. Indeed, by the Lindstr\"om-Gessel-Viennot lemma \cite[Corollary 2]{gessel1989determinants}, each sides of the desired equality can be expressed in the case of $k\geq 2$ as determinants of matrices whose entries are given by single path partition functions in the corresponding environments. If we establish the $k=1$ case, then the entries of these two matrices coincide and thus their determinants do as well.

When $k=1$, $\ell = 0$ or $1$. The case of $\ell = 1$ is addressed by Proposition~\ref{thm:invnyc}. So it only remains to handle the case of $\ell=0$, i.e., the case where the endpoint lies on the right boundary. This is precisely Lemma~\ref{l.extended invariance one path}.
\end{proof}

\subsubsection{The extended invariance in the $k=1,\ell=0$ case} \label{s.extended invariance simple case}
Here we give the proof of Lemma~\ref{l.extended invariance one path}. The main step is the following lemma, whose proof will be given in Section~\ref{s.proof of intermediate identity}.

\begin{lemma}   \label{lem:cross-line}
For any integers $1\le a \le m$ and $1\le b \le n$,
\begin{equation}   \label{eq:cl1}
T_1((a,1), (m,b)) = \sum_{k=1}^{\min\{a, n-b+1\}} \frac{T([(1,1)^{k-1}, (a,1)],[(m,n)^k])\cdot T([(1,1)^k],[(m,n)^{k-1}, (m,b)])}{T_k((1,1), (m,n))\cdot T_{k-1}((1,1), (m,n))}.
\end{equation}
Moreover, for any $1\leq j \le a\le m$ and $1\le b\le n-j+1$,
\begin{multline}   \label{eq:cl2}
\frac{T([(1,1)^{j-1}, (a,1)], [(m,n)^{j-1}, (m,b)])}{T_{j-1}((1,1), (m,n))} \\ = \sum_{k=j}^{\min\{a, n-b+1\}} \frac{T([(1,1)^{k-1}, (a,1)],[(m,n)^k])\cdot T([(1,1)^k],[(m,n)^{k-1}, (m,b)])}{T_k((1,1), (m,n))\cdot T_{k-1}((1,1), (m,n))}.
\end{multline}
\end{lemma}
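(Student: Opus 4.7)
We first observe that \eqref{eq:cl1} is the $j=1$ case of \eqref{eq:cl2}, under the conventions $T_0((1,1),(m,n)) := 1$ and $T([(1,1)^0, (a,1)], [(m,n)^0, (m,b)]) := T_1((a,1),(m,b))$. So it suffices to prove \eqref{eq:cl2}. The idea is that all of the disjoint-path partition functions appearing in \eqref{eq:cl2} arise simultaneously as minors of a single matrix, and the Desnanot--Jacobi (Dodgson condensation) determinantal identity will provide a telescoping identity whose summand is exactly the $k$-th term on the right of \eqref{eq:cl2}.

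Concretely, for each $k \in \intint{j, K}$ with $K := \min\{a, n-b+1\}$, let $D^{(k+1)}$ denote the $(k+1) \times (k+1)$ matrix whose rows are indexed (in order) by the starting points $(1,1), (2,1), \ldots, (k,1), (a,1)$, whose columns are indexed (in order) by the ending points $(m,n), (m,n-1), \ldots, (m,n-k+1), (m,b)$, and whose $(r,c)$ entry is $T_1(\mb u_r, \mb v_c)$. By Lemma~\ref{lem:disj-alter-def} (Lindstr\"om--Gessel--Viennot), $\det(D^{(k+1)}) = T([(1,1)^k, (a,1)], [(m,n)^k, (m,b)])$; the four submatrices obtained by deleting a single row from $\{k, k+1\}$ together with a single column from $\{k, k+1\}$ have determinants equal, respectively, to $T([(1,1)^{k-1}, (a,1)], [(m,n)^{k-1}, (m,b)])$, $T_k((1,1),(m,n))$, $T([(1,1)^{k-1}, (a,1)], [(m,n)^k])$, and $T([(1,1)^k], [(m,n)^{k-1}, (m,b)])$; and deleting both rows $\{k, k+1\}$ together with both columns $\{k, k+1\}$ yields $T_{k-1}((1,1),(m,n))$. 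Applying Desnanot--Jacobi to $D^{(k+1)}$ with these row and column pairs (the sign comes out positive since $k+(k+1)+k+(k+1)$ is even), and setting $g(i) := T([(1,1)^{i-1}, (a,1)], [(m,n)^{i-1}, (m,b)])/T_{i-1}((1,1),(m,n))$, one rearranges the identity to
\[
g(k) - g(k+1) = \frac{T([(1,1)^{k-1}, (a,1)], [(m,n)^k]) \cdot T([(1,1)^k], [(m,n)^{k-1}, (m,b)])}{T_k((1,1),(m,n)) \cdot T_{k-1}((1,1),(m,n))}.
\]

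Summing from $k = j$ to $k = K$ telescopes the left side to $g(j) - g(K+1)$. Here $g(j)$ is exactly the left-hand side of \eqref{eq:cl2}, and the boundary term $g(K+1)$ vanishes because the matrix $D^{(K+1)}$ has either a repeated row (if $K = a$, forcing $(a,1)$ to appear twice among the starting points) or a repeated column (if $K = n - b + 1$, forcing $(m,b)$ to appear twice among the ending points). The main obstacle is purely bookkeeping: one must carefully verify that each of the five sub-determinants of $D^{(k+1)}$ matches the claimed partition function with the correct planar ordering so that LGV applies without spurious signs, and that the Desnanot--Jacobi sign works out as stated for deletion of the ``middle'' indices $\{k,k+1\}$ rather than the more standard first/last corner indices. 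Once this dictionary is set up, the proof reduces to a classical determinantal identity together with an elementary telescoping sum.
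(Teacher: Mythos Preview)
Your proof is correct and takes essentially the same approach as the paper: both express the relevant partition functions as minors of the same $(k+1)\times(k+1)$ matrix via LGV, apply the Desnanot--Jacobi identity (the paper derives it through Cramer's rule rather than citing it by name), and telescope. One small remark: your worry about ``middle'' indices is unfounded, since $\{k,k+1\}$ are in fact the last two indices of your $(k+1)\times(k+1)$ matrix $D^{(k+1)}$, so this is exactly the standard corner case of Desnanot--Jacobi and no sign bookkeeping is needed.
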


\begin{remark}
In a simple case, the terms appearing in Lemma~\ref{lem:cross-line} can be understood directly. Indeed, consider \eqref{eq:cl1} with $a=2$. Observe that the product of the partition functions $T_1((2,1), (m,b))$ and $T_1((1,1), (m,n))$ is the partition function of all pairs of paths such that the first is from $(2,1)$ to $(m,b)$ and the second is from $(1,1)$ to $(m,n)$. This partition function can be broken into two terms: the case where the two paths remain disjoint, and the case where they intersect. The partition function restricted to the first case is precisely $T([(1,1), (2,1)], [(m,n), (m,b)])$. In the second case, we can reroute the paths to obtain two new paths, one which connects $(1,1)$ and $(m,b)$ and the second which connects $(2,1)$ and $(m,n)$. Thus the second term equals $T_1((1,1), (m,b))\cdot T_1((2,1), (m,n))$. Putting it together we obtain,
\begin{align*}
\MoveEqLeft[10]
T_1((2,1), (m,b)) \cdot T_1((1,1), (m,n))\\
&= T([(1,1), (2,1)], [(m,n), (m,b)]) + T_1((1,1), (m,b))\cdot T_1((2,1), (m,n)),
\end{align*}
which is equivalent to the determinant expression from Lemma~\ref{lem:disj-alter-def}. Now,  dividing both sides by $T_1((1,1), (m,n))$ yields precisely \eqref{eq:cl1} in this particular case. The proof of Lemma~\ref{lem:cross-line} in Section~\ref{s.proof of intermediate identity} will proceed by an analysis of larger determinants.
\end{remark}
\begin{proof}[Proof of Lemma~\ref{l.extended invariance one path}]
The basic idea of the proof is to show that the $k$\textsuperscript{th} summand in \eqref{eq:cl1} is exactly the contribution to $S([(a,1), (m,b)])$ coming from the paths that cross from the left line ensemble to the right one on the $k$\textsuperscript{th} line from the top. Observe that a priori we have no handle on the values of such partition functions unless $k=1$, in which case we can apply the single-path case of Proposition~\ref{thm:invnyc}. The strategy to handle larger $k$ is to pack the top $k-1$ lines with paths (and divide by the partition function of those $k-1$ paths, which is the weight of a single configuration) and then invoke the general $k$ case of Proposition~\ref{thm:invnyc}; this will essentially equate the $k$\textsuperscript{th} of \eqref{eq:cl1} to the mentioned partition function. Observe that this packing indeed ensures that the endpoints in the line ensemble are the same as would arise from $k$ disjoint paths ending at the corner $(m,n)$, thus making Proposition~\ref{thm:invnyc} applicable. 

Now we turn to the details, starting with some notation. 
For $x, k\in\N$, let $(x,1)^{k, \shortuparrow} = [(x,n), (x+1,n-1), \ldots, (x+k-1,n-k+1)]$ and recall $(m,n)^{k} = [(m,n), (m,n-1), \ldots ,(m, n-k+1)]$. Now, looking at the first factor in the $k$\textsuperscript{th} summand of \eqref{eq:cl1}, we observe that by \Cref{thm:invnyc}, for any $k\in\intint{1, \min\{a, n-b+1\}}$, 

\begin{align*}
\frac{T([(1,1)^{k-1}, (a,1)],[(m,n)^k])}{T_{k-1}((1,1), (m,n))}
&= \frac{S([(1,1)^{k-1, \shortuparrow}, (a,1)^{\shortuparrow}],[(m,n)^{k}])}{S([(1,1)^{k-1, \shortuparrow}],[(m,n)^{k-1}]}.
\end{align*}

\begin{figure}[b]
\begin{tikzpicture}[scale=0.7]

\node[anchor = south west, scale=0.8] at (9, 3) {$(m, n)$}; 
\node[anchor = west, scale=0.8] at (9, 1) {$(m, n-k+1)$}; 
\node[anchor = east, scale=0.8] at (4, 0) {$(a, 1)^{\shortuparrow}$}; 
\node[anchor = east, scale=0.8] at (1, 3) {$(1, 1)^{\shortuparrow}$};

\draw[orange, thick] (4,0) -- ++(3,0) -- ++(0,1) -- ++(2,0);

\newcommand{\thewidth}{8}
\newcommand{\theheight}{4}

\foreach \x in {1, 2}
{
  \draw[draw=Mulberry, thick] (\x, \theheight-\x) -- ++(\thewidth+1-\x,0); 
}

\begin{scope}[shift={(1,-1)}]
\foreach \y in {0, ..., \theheight}
\foreach \x in {\y,...,\thewidth}
{
  \ifnum \x < \thewidth
    \draw[densely dotted] (\x,\theheight-\y) -- ++(1,0);
  \fi
  \node[fill, circle, inner sep = 1pt] at (\x,\theheight-\y) {};

  \ifnum \x > \y 
    \ifnum \y < \theheight
    \draw[densely dotted] (\x,\theheight-\y) -- ++(0,-1);
    \fi
  \fi
}
\end{scope}

\end{tikzpicture}
\caption{The purple paths are the only ones which contribute to $S([(1,1)^{k-1,\shortuparrow}], [(m,n)]^{k-1})$ (here, $k=3$). In $S([(1,1)^{k-1,\shortuparrow}, (a,1)^{\shortuparrow}], (m,n)^{k})$, the same purple paths are frozen, and there is an additional factor of the partition function associated to the starting and ending points $(a,1)^{\shortuparrow}$ and $(m,n-k+1)$ (the weight of the depicted orange path is one contribution to it). Thus in the ratio of the two terms the contribution of the top $k-1$ frozen paths cancels.}\label{f.cancellation}
\end{figure}
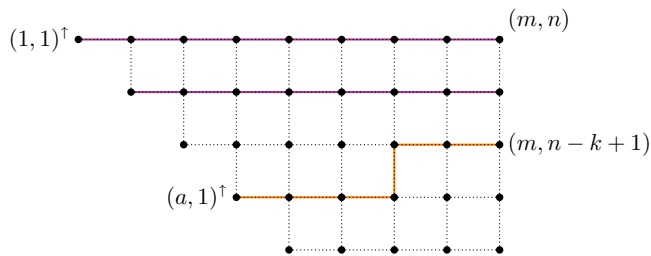

Now, for each $k$, the only contribution to $S([(1,1)^{k-1, \shortuparrow}], [(m,n)^{k-1}])$ is from  $k-1$ paths moving straight across from $(i,n-i+1)$ to $(m, n-i+1)$ for $i=1, \ldots, k-1$, as no other valid paths exist; see Figure~\ref{f.cancellation}. For analogous reasons, $S([(1,1)^{k-1, \shortuparrow}, (a,1)^\shortuparrow],[(m,n)^{k}])$ is given by the path weights of $k-1$ paths moving straight across from $(i,n-i+1)$ to $(m, n-i+1)$ for $i=1, \ldots, k-1$ multiplied by the partition function of a single polymer from $(a, 1)^{\shortuparrow}$ to $(m,n-k+1)$. Thus, the dependence on the top $k-1$ lines of $S$ factors out in both the numerator and denominator and is identical in the two. As a result, this dependence gets cancelled. This yields that
\begin{align*}
\frac{T([(1,1)^{k-1}, (a,1)],[(m,n)^k])}{T_{k-1}((1,1), (m,n))} = S\left([(a,1)^{\shortuparrow}], [(m,n-k+1)]\right).%
\end{align*}
Similarly, this time using Proposition~\ref{thm:invnyc} after a rotation to handle partition functions in the right line ensemble alone,
\begin{align*}
\frac{T([(1,1)^k],[(m,n)^{k-1}, (m,b)])}{T_{k-1}((1,1), (m,n))} = S\left([(m+1,n-k+1)], [(m, b)^{\shortrightarrow}]\right).
\end{align*}
Recall also from \eqref{e.Z_i defintion} that $Z_k^{m,n}(m) = T_k((1,1), (m,n))/T_{k-1}((1,1), (m,n))$. Substituting this equality along with the previous two displays into the first expression \eqref{eq:cl1} from \Cref{lem:cross-line} yields that
\begin{align*}
\MoveEqLeft[2]
T_1((a,1), (m,b))\\
&= \sum_{k=1}^{\min\{a, n-b+1\}}  S\left([(a,1)^{\shortuparrow}], [(m,n-k+1)]\right) \cdot Z^{m,n}_k(m)^{-1} \cdot S\left([(m+1,n-k+1)], [(m, b)^{\shortrightarrow}]\right).
\end{align*}
Recalling from \eqref{e.Y new vertex weight} that $Y_{(m+\frac{1}{2}, n-k+1)} = Z^{m,n}_k(m)^{-1}$, by Remark~\ref{r.properties of paths in joint line ensemble}, we see that the previous display is precisely the partition function $S\left([(a,1)^{\shortuparrow}], [(m, b)^{\shortrightarrow}]\right)$ decomposed based on the index of the line that the path cross from the left line ensemble to the right. This completes the proof.
\end{proof}

\subsubsection{Proof of intermediate identity}\label{s.proof of intermediate identity}

Here we give the proof of Lemma~\ref{lem:cross-line}.

\begin{proof}[Proof of Lemma~\ref{lem:cross-line}]
Take $k\in \intint{1, \min\{a, n-b+1\}}$.
Denote $\mb u_i = (i,1)$ and $\mb v_i = (m, n-i+1)$ for each $i\in\intint{1, k}$, and $\mb u_{k+1}=(a,1)$, $\mb v_{k+1}=(m, n-b+1)$. 
We consider the matrix $M=(M_{i,j})_{i,j=1}^{k+1}$, where $M_{i,j}=T_1(\mb u_i, \mb v_j)$.
Using \Cref{lem:disj-alter-def}, we have
\[
T([(1,1)^k, (a,1)], [(m,n)^k, (m,b)]) = \det M.
\]
If $k<\min\{a, n-b+1\}$, the lefthand side above is strictly positive, thus $M$ is invertible. Now consider $T([(1,1)^{k-1}, (a,1)],[(m,n)^k])$. Again by \Cref{lem:disj-alter-def}, this equals a minor of $M$ (obtained by deleting the $k$\textsuperscript{th} row and $(k+1)$\textsuperscript{st} column). Using Cramer's rule to evaluate this minor, we obtain
\begin{align*}
T([(1,1)^{k-1}, (a,1)],[(m,n)^k]) &= -(M^{-1})_{k,k+1}\cdot\det M.
\end{align*}
Similarly, we obtain
\begin{align*}
T([(1,1)^k],[(m,n)^{k-1}, (m,b)]) &= -(M^{-1})_{k+1,k}\cdot\det M,\\
T([(1,1)^{k-1}, (a,1)], [(m,n)^{k-1}, (m,b)]) &= (M^{-1})_{k,k}\cdot\det M,\\
T_k((1,1), (m,n)) &= (M^{-1})_{k+1,k+1}\cdot\det M,\\
T_{k-1}((1,1), (m,n)) &= \det\left((M^{-1})_{i,j}\right)_{i,j=k}^{k+1}\cdot\det M.
\end{align*}
Expanding $\det\left((M^{-1})_{i,j}\right)_{i,j=k}^{k+1}$ and substituting from the remaining equations above yields 
\begin{align*}
\MoveEqLeft[6]
T\left([(1,1)^k, (a,1)], [(m,n)^k, (m,b)]\right)\cdot T_{k-1}((1,1), (m,n)) \\ 
&= T([(1,1)^{k-1}, (a,1)], [(m,n)^{k-1}, (m,b)])\cdot T_k((1,1), (m,n))\\
&\qquad - T([(1,1)^{k-1}, (a,1)],[(m,n)^k])\cdot T([(1,1)^k],[(m,n)^{k-1}, (m,b)]);
\end{align*}
this is also an instance of the Desnanot-Jacobi identity (which we have essentially reproved above) applied to $M$.
By rearranging the terms we get
\begin{multline}  \label{eq:rearrange-k}
\frac{T([(1,1)^{k-1}, (a,1)],[(m,n)^k])\cdot T([(1,1)^k],[(m,n)^{k-1}, (m,b)])}{T_k((1,1), (m,n))\cdot T_{k-1}((1,1), (m,n))} \\ = 
\frac{T([(1,1)^{k-1}, (a,1)], [(m,n)^{k-1}, (m,b)])}{T_{k-1}((1,1), (m,n))} - \frac{T([(1,1)^k, (a,1)], [(m,n)^k, (m,b)])}{T_k((1,1), (m,n))}.
\end{multline}
If $k=\min\{a, n-b+1\}$, the second term in the right-hand side equals zero, as $(1,1)^k$ overlaps with $(a,1)$ or $(m,n)^k$ overlaps with $(m,b)$, so there is no space for the paths to be disjoint. Next, for the first term, either $[(1,1)^{k-1}, (a,1)]=[(1,1)^k]$ or $[(m,n)^{k-1}, (m,b)]=[(m,n)^k]$; in either case the lefthand side equals the first term on the righthand side. Thus \eqref{eq:rearrange-k} still holds.

Finally, by summing over $k\in \intint{1, \min\{a, n-b+1\}}$, or $k\in \intint{j, \min\{a, n-b+1\}}$, we get \eqref{eq:cl1} or \eqref{eq:cl2}, respectively (recalling that the second term on the righthand side in \eqref{eq:rearrange-k} is zero when $k=\min\{a,n-b+1\}$).
\end{proof}

In the remainder of Section~\ref{s.bk for log gamma} we return to working with the log-gamma polymer model rather than a general polymer model.

\subsection{Density formulas for the log-gamma polymer}

Recall the definitions of the index sets $J[m,n]$, $J_1[m,n]$, and $J_2[m,n]$ from \eqref{e.J definition} and of $Z_j^{m,n}$ from \eqref{e.Z^m,n first} and \eqref{e.Z^m,n second}. In this subsection and the next, these quantities are associated with the case where the underlying i.i.d.\ noise is given by inverse-gamma($\theta)$ random variables (recall \eqref{e.inverse gamma density}).

As mentioned, the log-gamma model is an integrable polymer model. Here, this means that we have explicit formulas for the joint distribution of a family of partition functions. We state this next, which is a consequence of results from  \cite{COSZ}. It will be proved just below.

\begin{lemma}  \label{lem:shift}
The joint probability density of $\{Z^{m,n}_j(i)\}_{(i,j)\in J[m,n]}$ at $\{z_j(i)\}_{(i,j)\in J[m,n]}$ equals
\begin{equation} \label{eq:densj}
\begin{split}
    \frac{1}{\mathfrak{Z}_{m,n}}\exp\Bigg( - \sum_{(i,j)\in J[m,n]; i\le m-1} \frac{z_j(i)}{z_j(i+1)} - \sum_{(i,j)\in J[m,n]; i\le m-1; j\le n-1} \frac{z_{j+1}(i+1)}{z_j(i)} \Bigg) \\
    \times \exp\Bigg( -\sum_{(i,j)\in J[m,n]; i\ge m+2} \frac{z_j(i)}{z_j(i-1)} - \sum_{(i,j)\in J[m,n]; i\ge m+2; j\le n-1} \frac{z_{j+1}(i-1)}{z_j(i)} \Bigg) \\ \times \prod_{(i,j)\in J[m,n]} z_j(i)^{-1} \prod_{j=1}^{\min\{m,n\}}z_j(m)^{-\theta},
\end{split}
\end{equation}
under the constraint $z_j(m) = z_j(m+1)$ for $j\in\intint{1,\min(m,n)}$, where $\mathfrak{Z}_{m,n}$ is the renormalization constant.
\end{lemma}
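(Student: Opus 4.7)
The plan is to obtain the density formula by a change of variables using the geometric RSK (gRSK) correspondence applied to the underlying $m \times n$ block of inverse-gamma weights $(X_{\mb v})_{\mb v \in \intint{1,m}\times\intint{1,n}}$, following \cite{COSZ}. The key observation is that gRSK gives a bijection from $\R_+^{m\times n}$ (the space of $X$-configurations) onto the subset of $\R_+^{J[m,n]}$ cut out by the constraint $z_j(m) = z_j(m+1)$ for $j \in \intint{1, \min(m,n)}$, and this bijection sends $X$ to the array $\{Z^{m,n}_j(i)\}_{(i,j) \in J[m,n]}$ of multi-path partition functions defined in \eqref{e.Z^m,n first}--\eqref{e.Z^m,n second}. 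The constraint reflects the fact that the two halves $J_1[m,n]$ and $J_2[m,n]$ correspond to the P and Q tableaux of gRSK, which share a common ``shape'' given by the top row $\{Z_j((1,1),(m,n))\}_{j}$. A dimension count is consistent: after imposing the $\min(m,n)$ constraints, the number of free $z$-coordinates equals $mn$.

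First I would invoke the Jacobian computation for gRSK from \cite[Section 3]{COSZ} (a version of the ``volume-preserving'' property in these coordinates), which yields that the pullback of the Lebesgue measure $\prod_{\mb v} \diff X_{\mb v}$ onto the $z$-coordinates equals $\prod_{(i,j) \in J[m,n]} z_j(i)^{-1}$ times Lebesgue measure on the constraint surface. This produces the $\prod z_j(i)^{-1}$ factor in \eqref{eq:densj}.

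Next I would rewrite the inverse-gamma density $\prod_{\mb v} X_{\mb v}^{-\theta - 1} \exp(-X_{\mb v}^{-1})$ in the new coordinates using the explicit gRSK local moves, which express each $X_{\mb v}^{-1}$ as an explicit combination of ratios of adjacent $z$-values in the output pattern. Summing over $\mb v$, the contributions to $\sum_{\mb v} X_{\mb v}^{-1}$ should decompose into exactly the two groups of nearest-neighbor ratios over $J_1[m,n]$ and $J_2[m,n]$ appearing in the first two lines of \eqref{eq:densj}. Meanwhile, the product $\prod_{\mb v} X_{\mb v}^{-\theta}$ telescopes along the gRSK recursion to leave only the top-row boundary contributions $\prod_{j=1}^{\min(m,n)} z_j(m)^{-\theta}$.

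I expect the main technical obstacle to be the bookkeeping near the interface $i \in \{m, m+1\}$ between the two halves, where one must verify that no cross-terms or mismatched boundary factors appear in the exponent and that the telescoping of the $X^{-\theta}$ factors leaves exactly the claimed top-row terms. The matching top-row constraint $z_j(m) = z_j(m+1)$ is precisely what makes this cancellation clean, in effect identifying the shapes of the P and Q tableaux. All of these calculations ultimately unwind directly from the gRSK formulas proved in \cite{COSZ}, specialized to the ``both sides'' parametrization by $J[m,n]$ rather than the more common one-sided parametrization by $J_1[m,n]$ alone.
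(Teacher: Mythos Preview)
Your proposed route via a direct gRSK Jacobian change of variables is a reasonable (and in some ways cleaner) alternative to the paper's route through the Markovian structure of \cite[Theorem 3.7, Proposition 5.3]{COSZ}. However, there is a genuine gap: gRSK applied to the $m\times n$ block does \emph{not} output the array $\{Z^{m,n}_j(i)\}_{(i,j)\in J[m,n]}$ as defined in \eqref{e.Z^m,n first}--\eqref{e.Z^m,n second}. On $J_1[m,n]$ it does, but on $J_2[m,n]$ the paper's $Z^{m,n}_j(i)$ is $Z_j((1,i-m),(m,n))$, a partition function whose \emph{starting} point varies along the left boundary with fixed endpoint $(m,n)$. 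The gRSK bijection instead produces the array $\hat Z^{m,n}_j(i) := Z_j((1,1),(m,m+n+1-i))$ on $J_2[m,n]$, i.e., partition functions from the fixed corner $(1,1)$ to points varying along the \emph{right} boundary. This is the $(P,Q)$ output of gRSK: both patterns are indexed by paths out of $(1,1)$, one to the top row and one to the right column. Your change-of-variables computation therefore yields the density \eqref{eq:densj} for $\hat Z$, not for $Z$.

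The missing step is showing that $\{Z^{m,n}_j(i)\}$ and $\{\hat Z^{m,n}_j(i)\}$ have the same joint law. This is not a triviality of gRSK or of the reflection symmetry of the model; it is a nontrivial distributional identity. The paper supplies it by invoking the shift-invariance results of \cite[Theorems 1.2 and 1.10]{DauS}, which allow one to slide the endpoints $(1,i-m)\to(m,n)$ down to $(1,1)\to(m,n-(i-m)+1)$ simultaneously for all $i$ without changing the joint distribution of all the partition functions involved. Without this ingredient your argument proves the lemma for the wrong array.
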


Before giving the proof of Lemma~\ref{lem:shift}, we make a
 simple but important observation about the explicit density function given in \eqref{eq:densj}, thus relating the law of $\{Z^{m,n}_j(i)\}_{(i,j)\in J[m,n]}$ with that of $\{Z^{m-1,n-1}_j(i)\}_{(i,j)\in J[m-1,n-1]}$. 

 We first note that $J[m-1,n-1]$ is naturally embedded inside $J[m,n]\setminus\{(i,j) \in J[m,n]: j=1\}$ and that therefore we can separate out the terms corresponding to entries of $J[m-1,n-1]$ in \eqref{eq:densj} to see the relation between the laws of $\{Z^{m,n}_j(i)\}_{(i,j)\in J[m,n]}$ and $\{Z^{m,n}_j(i)\}_{(i,j)\in J[m-1,n-1]}$. This also allows us to do so after conditioning on the values of $Z^{m,n}_1(\bm\cdot)$.

For integers $m, n \ge 2$ and any $g:\intint{1,m+n}\to \R$, we define $\Gamma_g: \R_+^{J[m-1,n-1]} \to \R_+$ as
\begin{equation}\label{e.Gamma_g definition}
\Gamma_g(\{z_j(i)\}_{(i,j)\in J[m-1,n-1]}) := \frac{1}{\tilde{\mathfrak{Z}}_{m,n,g} }  \exp\left( -\sum_{i=1}^{m-1}\frac{z_1(i)}{g(i)}-\sum_{i=m}^{m+n-2} \frac{z_1(i)}{g(i+2)} \right),
\end{equation}
where $\tilde{\mathfrak{Z}}_{m,n,g}$ is taken such that $\E\big[\Gamma_g(\{Z^{m-1,n-1}_j(i)\}_{(i,j)\in J[m-1,n-1]})\big]=1$.

Performing the separation of the terms corresponding to $J[m-1,n-1]$ after conditioning on the values of $Z^{m,n}_1(\mb\cdot)$ leads to the following statement, whose proof is by inspection and omitted.

\begin{corollary}   \label{cor:reweight}
For $g:\intint{1,m+n}\to \R$ such that $g(m) = g(m+1)$, the law of $\{Z^{m,n}_{j+1}(i+1)\}_{(i,j)\in J[m-1,n-1]}$, conditional on $Z^{m,n}_1(\bm\cdot)=g$, equals the (unconditioned) law of $\{Z^{m-1,n-1}_{j}(i)\}_{(i,j)\in J[m-1,n-1]}$ reweighted by $\Gamma_g(\{Z^{m-1,n-1}_{j}(i)\}_{(i,j)\in J[m-1,n-1]})$.  
\end{corollary}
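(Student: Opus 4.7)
The plan is to verify the claim by direct inspection of the joint density in Lemma~\ref{lem:shift}, isolating the dependence on the top line. Introducing the shorthand $w_j(i) := z_{j+1}(i+1)$ for $(i,j) \in J[m-1,n-1]$, Bayes' rule reduces the statement to showing that the $(m,n)$-density \eqref{eq:densj} evaluated at $z_1 = g$ and $z_{j+1}(i+1) = w_j(i)$ factors, up to a multiplicative constant depending only on $g$, as $\Gamma_g(w)$ times the $(m-1,n-1)$-density of Lemma~\ref{lem:shift} at $w$. The constraint $z_1(m) = z_1(m+1)$ becomes precisely the hypothesis $g(m) = g(m+1)$, while the remaining constraints for $j \ge 2$ translate under the re-indexing to the constraints $w_{j'}(m-1) = w_{j'}(m)$ of the $(m-1,n-1)$-system.

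I would next partition the four exponentiated sums and two products in \eqref{eq:densj} into three groups: (a) those involving only $z_1$, which after setting $z_1 = g$ become constants and contribute only to the normalization; (b) those involving only $z_j$ for $j \ge 2$; and (c) cross-terms coupling $z_1$ with $z_2$. For group (b), one verifies by inspection that the shift $(i,j) \mapsto (i+1, j+1)$ identifies each sum and product exactly with the corresponding one in Lemma~\ref{lem:shift} applied to the $(m-1,n-1)$-system: for example, the first sum restricted to $j \ge 2$ has index set $\{(i,j) \in J[m,n] : i \le m-1,\ j \ge 2\}$, which reparametrizes to $\{(i',j') \in J[m-1,n-1] : i' \le m-2\}$, exactly the range in the $(m-1,n-1)$-density whose boundary is at $m-1$. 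Analogous checks apply to the other three sums and to the products $\prod z_j(i)^{-1}$ and $\prod_{j=1}^{\min(m,n)} z_j(m)^{-\theta}$ restricted to $j \ge 2$, using that $\min(m,n) - 1 = \min(m-1, n-1)$.

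For group (c), the only cross-terms in \eqref{eq:densj} coupling $z_1$ with $z_2$ are the $j=1$ contributions to the two sums of the form $z_{j+1}(\cdot)/z_j(\cdot)$, namely
\[
\sum_{i=1}^{m-1}\frac{z_2(i+1)}{z_1(i)} + \sum_{i=m+2}^{m+n}\frac{z_2(i-1)}{z_1(i)}.
\]
Substituting $z_1(i) = g(i)$, $z_2(i+1) = w_1(i)$, and $z_2(i-1) = w_1(i-2)$ (so the second sum becomes $\sum_{k=m}^{m+n-2} w_1(k)/g(k+2)$ via $k = i-2$), the negative of this expression matches exactly the exponent of $\Gamma_g$ as defined in \eqref{e.Gamma_g definition}. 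Combining the three groups and using that the normalization $\tilde{\mathfrak{Z}}_{m,n,g}$ in $\Gamma_g$ is defined precisely so that $\Gamma_g$ has unit expectation against the $(m-1,n-1)$-density, one concludes that the conditional density equals $\Gamma_g(w)$ times the $(m-1,n-1)$-density. The main (and essentially only) obstacle will be the careful bookkeeping across the two halves $J_1$ and $J_2$ of the vertex set, but once the re-indexing $(i,j) \mapsto (i+1, j+1)$ is fixed, each match is mechanical.
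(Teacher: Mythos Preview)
Your proposal is correct and is exactly the approach the paper intends: the paper itself omits the proof, stating only that it is ``by inspection'' of the density in Lemma~\ref{lem:shift}, and your write-up carries out precisely that inspection (correctly identifying the cross-terms as matching the exponent in $\Gamma_g$ and verifying the re-indexing $(i,j)\mapsto(i+1,j+1)$ on each piece).
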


\begin{proof}[Proof of Lemma~\ref{lem:shift}]
Denote $\hat{Z}^{m,n}_j(i) = Z^{m,n}_j(i) = Z_j((1, 1), (i, n))$ for each $(i,j)\in J_1[m,n]$, and $\hat{Z}^{m,n}_j(i) = Z_j((1, 1), (m, m+n+1-i))$ for each $(i,j) \in J_2[m,n]$. Note that $\hat Z^{m,n}_j(m) = \hat Z^{m,n}_j(m+1)$  for each $j\in\intint{1,\min(m,n)}$, just as is the case for $Z_j^{m,n}$.

We claim that the joint probability density of $\{\hat{Z}^{m,n}_j(i)\}_{(i,j)\in J[m,n]}$ at $\{z_j(i)\}_{(i,j)\in J[m,n]}$ equals \eqref{eq:densj}.
Indeed, according to \cite[Proposition 2.5]{COSZ}, $\{\hat{Z}^{m,n}_j(i)\}_{(i,j)\in J[m,n]}$ can also be obtained via a row/column insertion procedure.
Such procedure gives a Markovian structure, as stated in \cite[Theorem 3.7]{COSZ}. Further, \cite[Proposition 5.3]{COSZ} guarantees that a certain limit of this Markovian object under a specific sequence of initial conditions  coincides with $\{\hat{Z}^{m,n}_j(i)\}_{(i,j)\in J[m,n]}$.

With these facts, the claim follows by repeatedly applying the Markovian structure in \cite[Theorem 3.7(i)]{COSZ}, using \cite[Theorem 3.7(ii)]{COSZ}, and then taking the limit from \cite[Proposition 5.3]{COSZ}. 

Finally, by a form of shift-invariance \cite[Theorems 1.2 and 1.10]{DauS}, it quickly follows that $\{Z^{m,n}_j(i)\}_{(i,j)\in J[m,n]}$ has the same distribution as $\{\hat{Z}^{m,n}_j(i)\}_{(i,j)\in J[m,n]}$. In more detail, for each $i$, shift invariance allows us to shift down the endpoints $(1,i)$ and $(m,n)$ of the partition function $Z^{m,n}_j(m+i)$ to instead be $(1,1)$ and $(m,n-i+1)$, without changing the overall joint distribution, i.e., $\{Z^{m,n}_j(i)\}_{(i,j)\in J[m,n]}$ has the same distribution before and after the shifts. Pictorially, we shift all the blue paths in the right panel of Figure~\ref{f.coords} down so that the left endpoint is $(1,1)$.
Thus the conclusion follows.
\end{proof}

\subsection{BK inequality for log-gamma polymers}\label{s.bk log gamma proof new}

In this section we give the proof of Theorem~\ref{thm:disj-BK-log-g-new}. We start with a negative association statement.
To state it, recall the definition of $\Gamma_g$ from \eqref{e.Gamma_g definition}.

\begin{lemma}\label{lem:neg-cor-fA}
Let $\msf A$ be an increasing event of $\{T_1((a,1), (m-1,b))\}_{a\in \intint{1, m-1}, b\in \intint{1, n-1}}$.
For any $g:\llbracket 1, m+n-1\rrbracket \to \R_+$,
the random variable
\begin{equation}  \label{eq:gammarfA}
\Gamma_g\left(\{Z^{m-1,n-1}_j(i)\}_{(i,j)\in J[m-1,n-1]}\right)
\end{equation}
and $\msf A$ are negatively associated; i.e., \eqref{eq:gammarfA} conditional on $\msf A$ is stochastically dominated by (unconditioned) \eqref{eq:gammarfA}.
\end{lemma}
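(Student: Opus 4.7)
The core of the proof is an application of the Harris--FKG inequality to the i.i.d.\ inverse-gamma weights $\{X_{\mb v}\}_{\mb v\in\Z^2}$; the real work is in verifying two monotonicities.

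First, inspecting \eqref{e.Gamma_g definition}, observe that $\Gamma_g(\{z_j(i)\}_{(i,j)\in J[m-1,n-1]})$ depends only on the top-line values $\{z_1(i)\}_{i\in\intint{1,m+n-2}}$ (these being precisely the $(i,j)\in J[m-1,n-1]$ with $j=1$), and since $g$ is strictly positive and the normalizing constant $\tilde{\mathfrak{Z}}_{m,n,g}$ is deterministic, this function is strictly decreasing in each $z_1(i)$. Now by definition each $Z^{m-1,n-1}_1(i)$ is a single-path partition function of the form $\sum_{\gamma} \prod_{\mb w\in\gamma} X_{\mb w}$, hence a nondecreasing function of each of the positive vertex weights $X_{\mb v}$. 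Composing, the random variable in \eqref{eq:gammarfA} is a nonincreasing measurable function of $\{X_{\mb v}\}$.

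Second, each $T_1((a,1),(m-1,b))$ is likewise a single-path partition function and therefore nondecreasing in each $X_{\mb v}$. Since $\msf A$ is an increasing event of $\{T_1((a,1),(m-1,b))\}_{a,b}$, it pulls back to an increasing event in $\{X_{\mb v}\}$.

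With the two opposite monotonicities in hand, the proof concludes by the Harris--FKG inequality applied to the product measure of i.i.d.\ inverse-gammas: for any $t\in\R$, the event $\{\Gamma_g \ge t\}$ is a decreasing event of $\{X_{\mb v}\}$, so it is negatively correlated with the increasing event $\msf A$, giving
\[
\P\bigl(\Gamma_g \ge t \cap \msf A\bigr) \le \P(\Gamma_g \ge t)\,\P(\msf A),
\]
equivalently $\P(\Gamma_g \ge t \mid \msf A) \le \P(\Gamma_g \ge t)$. Since this holds for every $t$, the conditional law of $\Gamma_g$ given $\msf A$ is stochastically dominated by its unconditional law, as claimed. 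I expect no serious obstacle: the argument is essentially mechanical once one observes that $\Gamma_g$ is built out of the top line of the smaller line ensemble in the ``wrong'' direction compared to $\msf A$.
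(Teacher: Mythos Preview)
Your proof is correct and takes essentially the same approach as the paper: both arguments reduce to the Harris--FKG inequality for the i.i.d.\ inverse-gamma weights, using that single-path partition functions are nondecreasing in each vertex weight (so $\Gamma_g$ is decreasing and $\mathds 1_{\msf A}$ is increasing). Your write-up is slightly more detailed in spelling out that $\Gamma_g$ depends only on the top-line values $z_1(i)$ and in deriving the stochastic domination from the correlation inequality, but the substance is identical.
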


\begin{proof}
Consider both $\Gamma_g\left(\{Z^{m-1,n-1}_j(i)\}_{(i,j)\in J[m-1,n-1]}\right)$ and $\mathds{1}[\msf A]$ as functions of the i.i.d.~random variables $\{X_{i,j}\}_{i\in \intint{1,m-1}, j\in\intint{1,n-1}}$. 
Since $T_1(\mb u, \mb v)$ is increasing in $\{X_{i,j}\}_{i\in \intint{1,m-1}, j\in\intint{1,n-1}}$, for any $\mb u\le \mb v$ in each coordinate, we have that $\Gamma_g\left(\{Z^{m-1,n-1}_j(i)\}_{(i,j)\in J[m-1,n-1]}\right)$ is a decreasing function (recall \eqref{e.Gamma_g definition}) and $\mathds{1}[\msf A]$ is an increasing function. 
Therefore the conclusion follows from the FKG inequality.
\end{proof}

\begin{proof}[Proof of \Cref{thm:disj-BK-log-g-new}]
It suffices to show that, for any $g:\intint{1,m+n-1}\to \R_+$ such that $g|_{\intint{1,m}}=f$,
\begin{multline}  \label{eq:freexp2}
\PP \left( \left\{ \frac{T([(1, 1), (a, 1)], [(m,n), (m,b)])}{T_1((1,1), (m,n))} \right\}_{a\in \intint{2, m}, b\in \intint{1, n-1}} \in \msf A  \;\Big|\; Z_1^{m,n} = g \right) \\ \le \PP \left( \{T_1((a-1,1), (m-1,b))\}_{a\in \intint{2, m}, b\in \intint{1, n-1}} \in \msf A \right).    
\end{multline}
By \Cref{corr:cross-line}, the left-hand side can be written as
\[
\PP \left( \left\{ F^{m-1,n-1}_{a-1,b}(\{Z^{m,n}_{j+1}(i+1)\}_{(i,j)\in J[m-1,n-1]}) \right\}_{a\in \intint{2, m}, b\in \intint{1, n-1}} \in \msf A  \;\Big|\; Z_1^{m,n} = g \right). \]
By \Cref{cor:reweight}, this further equals
\begin{multline*}
\E \bigg( \don\left\{ \left\{ F^{m-1,n-1}_{a-1,b}(\{Z^{m-1,n-1}_{j}(i)\}_{(i,j)\in J[m-1,n-1]}) \right\}_{a\in \intint{2, m}, b\in \intint{1, n-1}} \in \msf A \right\} \\ \times \Gamma_g(\{Z^{m-1,n-1}_{j}(i)\}_{(i,j)\in J[m-1,n-1]}) \bigg),
\end{multline*}
Note that by \Cref{corr:cross-line} again, this is
\[
\E \bigg( \don\left\{ \{T_1((a-1,1), (m-1,b))\}_{a\in \intint{2, m}, b\in \intint{1, n-1}}  \in \msf A \right\} \Gamma_g(\{Z^{m-1,n-1}_{j}(i)\}_{(i,j)\in J[m-1,n-1]}) \bigg).
\]
By \Cref{lem:neg-cor-fA}, the previous display is upper bounded by the right-hand side of \eqref{eq:freexp2} (also using that $\E[\Gamma_g(\{Z^{m-1,n-1}_{j}(i)\}_{(i,j)\in J[m-1,n-1]})] = 1$). Thus, the conclusion follows.
\end{proof}

\section{Inputs and tools for continuum objects}\label{s.prelim tools}

Having established a form of the BK inequality for disjoint polymers in Theorem~\ref{thm:disj-BK-log-g-new}, our next task is to take the continuum limit to establish Theorem~\ref{thm:disj-BK}. In this section we collect some properties of the continuum objects that will be useful, and we give the proof of Theorem~\ref{thm:disj-BK} (as well as of Theorems~\ref{thm:disj-BK-line-ensemble} and \ref{thm:disj-BK-line-ensemble-technical}) in Section~\ref{s.log gamma to CDRP}.
We start by stating some distributional symmetries of $\mc Z$, namely shift, shear, and reflection invariances.

\begin{lemma}[Theorem~3.1, \cite{alberts2014continuum} or Proposition 2.3, \cite{AJRS}]\label{l.Z symmetries}
$\cZ$ has the same distribution as

\begin{enumerate}
    \item (Shift and shear) $(x,s;y,t)\mapsto \cZ(x+\nu s+\alpha, s+\eta; y+\nu t + \alpha, t + \eta) \exp( \nu^2(t-s) + 2\nu(y-x) )$, for any $\nu, \alpha, \eta\in \RR$; and
    \item (Reflection) $(x,s;y,t)\mapsto \cZ(-x,s;-y,t)$, and $(x,s;y,t)\mapsto \cZ(y,-t;x,-s)$.
\end{enumerate}
Further, for any disjoint time intervals $\{(s_i,t_i)\}_{i=1}^k$, the functions $\cZ(\cdot, s_i; \cdot, t_i)$ are independent. Also, with probability one $\cZ(x, s; y, t) > 0$ for all $x,y\in\R$ and $0<s<t$.
Finally, (1) and (2) also hold for $\mc K$ by \eqref{e.K definition}.
\end{lemma}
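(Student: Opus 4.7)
The plan is to deduce all the distributional symmetries of $\cZ$ from the corresponding invariance and independence properties of the driving space-time white noise $\xi$, together with the uniqueness of the mild solution of \eqref{e.SHE definition}. The general template is: whenever $\tilde\xi$ is a deterministic transformation of $\xi$ that has the same law as $\xi$, the mild solution of the SHE driven by $\tilde\xi$ (with an appropriately transformed initial condition) exists, is unique, and can be identified with an explicit rewriting of $\cZ$; matching the two expressions yields a distributional identity jointly in all four space-time coordinates.

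For the shift part of (1) (the case $\nu = 0$), under the translation $(y,t)\mapsto(y+\alpha,t+\eta)$ the white noise is stationary, so $\tilde\xi(y,t) := \xi(y+\alpha,t+\eta)$ is again a space-time white noise. The unique mild solution driven by $\tilde\xi$ starting from $\delta_x$ at time $s$ is, by direct inspection, $\cZ(x+\alpha, s+\eta;\,y+\alpha, t+\eta)$, giving the claim. For the shear part ($\nu \neq 0$) the cleanest route is through the Feynman--Kac / chaos series representation
\[
\cZ(x,s;y,t) \;=\; p_{t-s}(y-x)\,\E_{x,s;y,t}\!\left[\exp\!\Bigl(\int_s^t \xi(B_u,u)\,du\Bigr)\right],
\]
where the expectation is over a Brownian bridge $B$ from $(x,s)$ to $(y,t)$ with diffusivity matching \eqref{e.SHE definition} and the exponential is taken in the Wick-ordered sense. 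Shifting the bridge by the linear drift $\nu(u-s)$ converts it into a bridge between the spacetime-shifted endpoints and, by Girsanov, produces exactly the Radon--Nikodym factor $\exp(\nu^2(t-s)+2\nu(y-x))$. The reflection statements in (2) are handled analogously from the symmetries $\xi(y,t)\stackrel{d}{=}\xi(-y,t)$ and $\xi(y,t)\stackrel{d}{=}\xi(y,-t)$ together with the corresponding reflection/time-reversal symmetries of the Brownian bridge.

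Independence of $\cZ(\cdot,s_i;\cdot,t_i)$ across disjoint time intervals is immediate once one notes that white noise restricted to disjoint spacetime strips $\R\times[s_i,t_i]$ is independent, and that the mild solution on each strip with deterministic initial data is a measurable functional of $\xi$ restricted to that strip alone. Strict positivity with probability one is Mueller's classical positivity theorem for the SHE (extended in the literature to cover delta initial data), which I would invoke rather than reprove.

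Finally, the extension to $\cK_n$ is obtained entrywise from the determinant formula \eqref{e.K definition}: for each $(i,j)$, apply the $\cZ$-symmetry to the entry $\cZ(x_i,s;y_j,t)$. The reflection symmetries transfer immediately. For (1), the multiplicative factor $\exp(\nu^2(t-s)+2\nu(y_j-x_i))$ at entry $(i,j)$ factors as $\exp(\nu^2(t-s))\cdot\exp(-2\nu x_i)\cdot\exp(2\nu y_j)$, so it can be pulled out row-by-row and column-by-column from the determinant, yielding a single scalar prefactor
\[
\exp\!\Bigl(n\nu^2(t-s) + 2\nu\!\sum_{j=1}^n y_j - 2\nu\!\sum_{i=1}^n x_i\Bigr)
\]
multiplying $\cK_n$ at the shifted arguments. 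The main technical obstacle I anticipate is upgrading the identities of finite-dimensional marginals produced by this argument to equality in law of the full continuous four-parameter random fields; this is resolved by the a.s.\ continuity of $\cZ$ (and hence of $\cK_n$) in all four arguments together with the continuity in the parameters of each of the above transformations, so that the two sides agree as random elements of the space of continuous functions on $\R^4_{\uparrow}$.
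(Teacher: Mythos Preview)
The paper does not prove this lemma; it is stated with a citation to \cite{alberts2014continuum} and \cite{AJRS} and used as a black box. Your sketch follows the standard route taken in those references---deduce each symmetry from the corresponding invariance of the white noise together with the Feynman--Kac/chaos representation and uniqueness of the mild solution---so there is no real discrepancy to discuss.

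One point in your shear argument does need fixing. Adding the deterministic linear function $u\mapsto \nu u$ to a Brownian bridge is simply a translation on path space and yields exactly a Brownian bridge with the shifted endpoints; there is no Radon--Nikodym factor and no Girsanov is involved. The factor $\exp(\nu^2(t-s)+2\nu(y-x))$ in the statement comes instead from the heat-kernel prefactor in the Feynman--Kac formula: with $x'=x+\nu s$, $y'=y+\nu t$,
\[
p_{t-s}(y'-x') \;=\; p_{t-s}\bigl(y-x+\nu(t-s)\bigr) \;=\; p_{t-s}(y-x)\,\exp\!\bigl(-2\nu(y-x)-\nu^2(t-s)\bigr),
\]
and this, combined with the fact that the sheared noise $\tilde\xi(z,u):=\xi(z+\nu u,u)$ has the same law as $\xi$ (the shear is area-preserving), gives the identity. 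Your treatments of the shift, the reflections, the independence over disjoint time strips, positivity via Mueller's theorem, and the row/column factorisation for $\cK_n$ are all fine.
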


\subsection{Properties of multi-point partition functions}

Next we turn to a rescaled version $\cM_n$ of the multi-layer partition function $\cK_n$ from \eqref{e.K definition}. 

The rescaling to go from $\cK_n$ to $\cM_n$ is by the Vandermonde determinants of the endpoints, and can be regarded as an entropy factor; this is necessary to relate $\cK_n$ to $\cZ_n$ (as in \eqref{e.Z_k continuum}), where all the endpoints are equal. 
We define $\cM_n: \{(\mb x,s; \mb y; t): \mb x, \mb y\in \rDe_n, s,t\in\R, s<t\}\to\R$ by
\begin{equation}\label{e.M definition}
\cM_n(\bx, s;\by, t) := \frac{\cK_n(\bx, s;\by, t)}{\Delta(\bx)\Delta(\by)} = \det[\cZ(x_i,s;y_j,t)]_{i,j=1}^k\Delta(\bx)^{-1}\Delta(\by)^{-1}
\end{equation}
where $\Delta(\bx) = \prod_{i<j}(x_i-x_j)$. %

\subsubsection{Positivity and implications.} 
Using the Karlin-McGregor theorem, it is straightforward to deduce that $\cM$ is non-negative, as shown in \cite[Proposition 5.5]{o2016multi}.
The simultaneous strict inequality is proved in \cite[Theorem 1.4]{LW}, and also in \cite[Theorem 2.17]{AJRS} with a different method.

\begin{lemma} \label{lem:pos}
Almost surely, for any $s<t$, $n\in\NN$, and $\bx, \by\in \rDe_n$, it holds that $\cM_n(\bx,s;\by,t)>0$.
\end{lemma}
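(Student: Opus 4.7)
The plan is to deduce this lemma almost entirely from the cited references, but with an eye toward what each one provides and how they combine to give the \emph{simultaneous} (over all $s<t$, $n$, and $\mathbf{x},\mathbf{y}\in\mathring{\Lambda}_n$) strict positivity.

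First I would establish almost sure nonnegativity pointwise. For fixed $s<t$, $n\in\mathbb{N}$, and $\mathbf{x},\mathbf{y}\in\mathring{\Lambda}_n$, the identity
\[
\mathcal{M}_n(\mathbf{x},s;\mathbf{y},t)=\det[\mathcal{Z}(x_i,s;y_j,t)]_{i,j=1}^n/(\Delta(\mathbf{x})\Delta(\mathbf{y}))
\]
together with the Karlin--McGregor / Lindstr\"om--Gessel--Viennot interpretation of this determinant yields a partition function for $n$ non-crossing Brownian bridges in the white-noise environment; this is exactly \cite[Proposition 5.5]{o2016multi}, which gives $\mathcal{M}_n\geq 0$ on a probability-one event depending on the choice of parameters.

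Second, I would upgrade this to a single almost sure event valid for all parameters simultaneously. For this one uses the joint continuity of $(\mathbf{x},s;\mathbf{y},t)\mapsto\mathcal{Z}(x,s;y,t)$ as a four-parameter random continuous function (already cited from \cite[Theorem 1.1]{LW}), which transfers via the determinantal formula and the continuity of $\Delta$ on $\mathring{\Lambda}_n$ to joint continuity of $\mathcal{M}_n$ away from the diagonals. Taking a countable dense set of parameters in $\{(n,\mathbf{x},s;\mathbf{y},t):n\in\mathbb{N},\,s<t,\,\mathbf{x},\mathbf{y}\in\mathring{\Lambda}_n\}$ and applying the previous pointwise nonnegativity on each, a single countable intersection of probability-one events combined with continuity yields simultaneous nonnegativity.

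The main obstacle, and the only genuinely nontrivial step, is ruling out the vanishing of $\mathcal{M}_n$; this is where I would invoke \cite[Theorem 1.4]{LW} (or alternatively \cite[Theorem 2.17]{AJRS}). Both references prove strict positivity directly by dedicated arguments: one works with the SHE and a Feynman--Kac representation together with small-ball estimates for the underlying non-intersecting Brownian bridges, while the other proceeds via a different approximation scheme. Since the references already produce the simultaneous strict positivity (over all parameters in $\mathring{\Lambda}_n$ and $s<t$) on a single probability-one event, invoking either completes the proof. Thus the proof reduces to: nonnegativity from \cite{o2016multi} plus continuity from \cite{LW}, upgraded to strict positivity by \cite[Theorem 1.4]{LW} or \cite[Theorem 2.17]{AJRS}.
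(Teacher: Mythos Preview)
Your proposal is correct and matches the paper's treatment: the paper does not give a self-contained proof but simply attributes nonnegativity to the Karlin--McGregor argument in \cite[Proposition~5.5]{o2016multi} and the simultaneous strict positivity to \cite[Theorem~1.4]{LW} (alternatively \cite[Theorem~2.17]{AJRS}). Your added remarks about extending pointwise nonnegativity to a simultaneous statement via continuity are fine but not strictly needed, since the cited strict-positivity results already deliver the simultaneous statement directly.
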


The $k=2$ case of \Cref{lem:pos} will be useful later, and we write it out explicitly: for any $s<t$, $x_1<x_2$, and $y_1<y_2$,
\begin{equation}  \label{eq:FR-quad}
\cZ(x_1,s;y_1,t)\cZ(x_2,s;y_2,t) >\cZ(x_1,s;y_2,t)\cZ(x_2,s;y_1,t).    
\end{equation}
This is a positive temperature analog of the quadrangle inequality in zero temperature (see e.g., \cite{DOV,BGH}).

This positivity yields an inequality relating $\cK_2$ and $\cZ$ which will be useful later. Here and in the remainder of the paper we adopt a slightly different notation for $\cK_2$ than in Section~\ref{s.intro}, namely, we write $\cK_2([(x_1, s), (x_2,s)]; [(y_1, t), (y_2,t)])$ rather than $\cK_2( (x_1, x_2), s; (y_1,y_2), t)$. This is in order to be more consistent with the notation from Section~\ref{s.bk for log gamma} and as it is slightly clearer when dealing with just two points.

\begin{lemma}\label{l.K Z ordered inequality}
The following holds almost surely. For any $x_1\leq x_2$, $y_1\leq y_2\leq y_3$, and $s<t$,
\begin{align*}
\frac{\mc K_2([(x_1,s), (x_2,s)]; [(y_1,t), (y_2,t)])}{\mc Z(x_2, s; y_2, t)} < \frac{\mc K_2([(x_1,s), (x_2,s)]; [(y_1,t), (y_3,t)])}{\mc Z(x_2, s; y_3,t)}
\end{align*}
and
\begin{align*}
\frac{\mc K_2([(x_1,s), (x_2,s)]; [(y_2,t), (y_3,t)])}{\mc Z(x_1, s; y_2, t)} < \frac{\mc K_2([(x_1,s), (x_2,s)]; [(y_1,t), (y_3,t)])}{\mc Z(x_1, s; y_1,t)}.
\end{align*}
\end{lemma}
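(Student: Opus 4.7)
The plan is to unpack the definitions and reduce both displayed inequalities to the quadrangle inequality \eqref{eq:FR-quad} (i.e.\ the $n=2$ case of Lemma~\ref{lem:pos}), combined with the pointwise positivity $\cZ(x,s;y,t) > 0$ from Lemma~\ref{l.Z symmetries}.

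For the first inequality, I would write out
\[
\cK_2([(x_1,s),(x_2,s)];[(y_1,t),(y_j,t)]) = \cZ(x_1,s;y_1,t)\cZ(x_2,s;y_j,t) - \cZ(x_1,s;y_j,t)\cZ(x_2,s;y_1,t)
\]
for $j=2,3$, then divide by $\cZ(x_2,s;y_j,t)$. In each case the first term collapses to the single quantity $\cZ(x_1,s;y_1,t)$, which cancels between the two sides of the claimed inequality. After cancellation, the desired inequality is equivalent to
\[
\frac{\cZ(x_1,s;y_3,t)\cZ(x_2,s;y_1,t)}{\cZ(x_2,s;y_3,t)} < \frac{\cZ(x_1,s;y_2,t)\cZ(x_2,s;y_1,t)}{\cZ(x_2,s;y_2,t)},
\]
and since $\cZ(x_2,s;y_1,t)>0$ almost surely (Lemma~\ref{l.Z symmetries}), dividing through and clearing denominators reduces the claim to
\[
\cZ(x_1,s;y_3,t)\cZ(x_2,s;y_2,t) < \cZ(x_1,s;y_2,t)\cZ(x_2,s;y_3,t),
\]
which is exactly \eqref{eq:FR-quad} applied to $x_1<x_2$ and $y_2<y_3$.

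For the second inequality, the same algebraic manipulation works: expand both $\cK_2$ terms, divide by $\cZ(x_1,s;y_2,t)$ and $\cZ(x_1,s;y_1,t)$ respectively, observe that the leading $\cZ(x_2,s;y_3,t)$ term cancels, and reduce to
\[
\frac{\cZ(x_2,s;y_1,t)}{\cZ(x_1,s;y_1,t)} < \frac{\cZ(x_2,s;y_2,t)}{\cZ(x_1,s;y_2,t)},
\]
which after clearing positive denominators is \eqref{eq:FR-quad} applied to $x_1<x_2$ and $y_1<y_2$. (If any of the weak inequalities among the $x_i$ or $y_j$ are equalities, both sides of the corresponding original display coincide and the statement should be read with the natural convention; the interesting generic case is handled by the quadrangle inequality.)

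This is almost entirely a mechanical reduction, so I do not expect any real obstacle; the only thing to be careful about is tracking signs and cancellations, and ensuring the divisors used are almost surely strictly positive, which is guaranteed by Lemma~\ref{l.Z symmetries}. The strictness in both conclusions comes directly from the strictness in \eqref{eq:FR-quad}, which in turn comes from Lemma~\ref{lem:pos}.
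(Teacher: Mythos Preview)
Your proof is correct and follows essentially the same approach as the paper's: expand $\cK_2$ via its determinantal definition, cancel the common term, and reduce to the quadrangle inequality \eqref{eq:FR-quad}. The paper cross-multiplies before cancelling while you divide first, but the algebra is identical.
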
 

\begin{proof}
We give the proof for the first inequality as the second is analogous. Using \eqref{e.K definition}, expanding, and rearranging, we must show that
\begin{align*}
\MoveEqLeft[6]
\Bigl(\cZ(x_1,s;y_1,t)\cZ(x_2,s;y_2,t) - \cZ(x_1,s;y_2,t)\cZ(x_2,s;y_1,t)\Bigr)\cZ(x_2,s;y_3,t)\\
&< \Bigl(\cZ(x_1,s;y_1,t)\cZ(x_2,s;y_3,t) - \cZ(x_1,s;y_3,t)\cZ(x_2,s;y_1,t)\Bigr)\cZ(x_2,s;y_2,t).
\end{align*}
Cancelling and rearranging, this is equivalent to
\begin{align*}
\cZ(x_1,s;y_2,t)\cZ(x_3,s;y_3,t) - \cZ(x_1,s;y_3,t)\cZ(x_2,s;y_2,t) > 0.
\end{align*}
This is precisely \eqref{eq:FR-quad}.
\end{proof} 

\subsubsection{Continuous extension.}
Next we state the formal relation of $\cM_n$  and the multi-layer partition function $\cZ_n$ (recall \eqref{e.Z_k continuum}), through the following extension of $\cM_n$ to the boundary of $\Lambda_n\times \Lambda_n$. Though we do not use this fact directly anywhere in the paper, we state it for completeness and as it is used in the proofs of other facts we invoke (such as Lemma~\ref{l.K Z identity} ahead).

\begin{lemma}[\protect{\cite[Theorem 1.3]{LW}}]  \label{lem:ext} 
For any $s<t$ and $n\in\NN$, the function $(\bx, \by)\mapsto \cM_n(\bx,s;\by,t)$ (where $\mb x, \mb y\in\rDe_n$) almost surely extends continuously to $\Lambda_n\times \Lambda_n$, and the extension satisfies
\[
2^{-n(n-1)/2}(t-s)^{n(n-1)/2}\left(\prod_{i=1}^{n-1}i!\right)\cM_n(x\mathbf{1},s;y\mathbf{1},t)=\cZ_n(x,s;y,t),
\]
where $\mathbf{1}$ is the vector in $\R^n$ where each entry equals $1$. 
\end{lemma}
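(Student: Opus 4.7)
The plan is to establish the continuous extension and the identification of the boundary value in one unified pass, by inserting the chaos expansion \eqref{e.Z_k continuum} into each entry of the determinant $\cK_n(\mb x,s;\mb y,t) = \det[\cZ(x_i,s;y_j,t)]_{i,j=1}^n$, using the Karlin-McGregor determinantal structure of non-intersecting Brownian paths to reorganize the resulting series, and computing the limit of the Gaussian prefactor via a Cauchy-Binet expansion.

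Expanding each $\cZ(x_i,s;y_j,t)$ as a chaos series and applying multilinearity of the determinant, the inner determinants of transition densities that appear are, by the Karlin-McGregor reflection principle, exactly the unnormalized densities of $n$ non-intersecting rate-two Brownian paths from $(x_i,s)$ to $(y_i,t)$ passing through the specified intermediate space-time points. This rewrites
\[
\cK_n(\mb x,s;\mb y,t) = \det[p_{t-s}(y_j-x_i)]_{i,j=1}^n \cdot\left(1+\sum_{m\ge 1}\int_{\Lambda_m([s,t])}\!\int_{\R^m} R_m^{\mb x,\mb y} \prod_{k=1}^m W(\diff z_k,\diff t_k)\right),
\]
where $R_m^{\mb x,\mb y}$ is the $m$-point correlation function of $n$ non-intersecting Brownian bridges with endpoints $(x_i,s)$ and $(y_i,t)$. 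As $\mb x\to x\mathbf{1}$ and $\mb y\to y\mathbf{1}$, $R_m^{\mb x,\mb y}$ converges to the correlation function $R$ appearing in \eqref{e.Z_k continuum}, so the series tends to $\cZ_n(x,s;y,t)/p_{t-s}(y-x)^n$.

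For the Gaussian prefactor, write $p_{t-s}(y-x)=(\pi(t-s))^{-1/2}\exp(-(y-x)^2/(t-s))$ and factor
\[
\det[p_{t-s}(y_j-x_i)] = (\pi(t-s))^{-n/2}\prod_i e^{-x_i^2/(t-s)}\prod_j e^{-y_j^2/(t-s)}\det[e^{2x_iy_j/(t-s)}].
\]
The kernel $e^{\alpha xy}=\sum_k(\alpha x)^k y^k/k!$ admits a rank factorization to which Cauchy-Binet applies:
\[
\det[e^{\alpha x_iy_j}] = \sum_{0\le k_1<\cdots<k_n}\frac{\alpha^{k_1+\cdots+k_n}}{\prod_j k_j!}\det[x_i^{k_j}]\det[y_i^{k_j}].
\]
The leading term at $(k_1,\ldots,k_n)=(0,\ldots,n-1)$ contributes $\alpha^{n(n-1)/2}\Delta(\mb x)\Delta(\mb y)/\prod_{i=1}^{n-1}i!$, while all other terms are of strictly higher order in $\mb x-x\mathbf{1}$ or $\mb y-y\mathbf{1}$. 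Setting $\alpha=2/(t-s)$, dividing by $\Delta(\mb x)\Delta(\mb y)$, and combining with the other pieces of the prefactor yields
\[
\lim_{\mb x\to x\mathbf{1},\mb y\to y\mathbf{1}}\frac{\det[p_{t-s}(y_j-x_i)]}{\Delta(\mb x)\Delta(\mb y)} = p_{t-s}(y-x)^n\cdot\frac{2^{n(n-1)/2}}{(t-s)^{n(n-1)/2}\prod_{i=1}^{n-1}i!},
\]
which, combined with the series limit above, reproduces $\cZ_n(x,s;y,t)$ multiplied by the reciprocal of the constant $2^{-n(n-1)/2}(t-s)^{n(n-1)/2}\prod_{i=1}^{n-1}i!$ claimed in the lemma.

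The main obstacle is twofold. First, the Karlin-McGregor identity must be justified at the level of the chaos series, and the convergence $R_m^{\mb x,\mb y}\to R_m$ inside the multiple Wiener integrals requires $L^2(W)$ control uniform in $(\mb x,\mb y)$ in a neighborhood of the diagonal of $\Lambda_n\times\Lambda_n$; the finiteness of the Cauchy-Binet leading constant is what makes such uniformity achievable. Second, upgrading this pointwise-in-$(\mb x,\mb y)$ identity to an almost sure continuous extension simultaneous in all parameters relies on the joint four-parameter continuity of $\cZ$, and hence of $\cK_n$, established in \cite{LW}, which permits the extension to be defined on a single probability-one event for each $s<t$ and $n\in\NN$.
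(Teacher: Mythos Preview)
The paper does not prove this lemma; it is quoted as \cite[Theorem 1.3]{LW}. So there is no in-paper argument to compare against, and I assess your proposal on its own. Your overall plan---rewrite $\cK_n$ via Karlin--McGregor as a Gaussian determinant times a chaos series with non-intersecting-bridge correlation kernels, then pass both factors to the diagonal---is the correct strategy and matches \cite{LW} and \cite{o2016multi}. But what you have written is an outline: the two items you flag as ``obstacles'' are precisely the substance of the theorem, and you do not resolve either.

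There is also a slip in the Cauchy--Binet step. You claim the terms with $(k_1,\ldots,k_n)\neq(0,\ldots,n-1)$ are of strictly higher order in $\mb x-x\mb 1$ or $\mb y-y\mb 1$; this is false. After dividing by $\Delta(\mb x)\Delta(\mb y)$ each such term becomes a product of Schur polynomials $s_\lambda(\mb x)s_\lambda(\mb y)$, which do \emph{not} vanish at $\mb x=x\mb 1$, $\mb y=y\mb 1$. The full sum gives $\alpha^{n(n-1)/2}e^{n\alpha xy}/\prod_{i=1}^{n-1} i!$, and it is exactly the factor $e^{n\alpha xy}$ that combines with $e^{-n(x^2+y^2)/(t-s)}$ to yield $p_{t-s}(y-x)^n$. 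Your displayed limit for the prefactor happens to be correct, but the justification is not.

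The more serious gap is the almost sure continuous extension. You invoke continuity of $\cZ$, hence of $\cK_n$, but continuity of $\cK_n$ on $\rDe_n\times\rDe_n$ says nothing about $\cM_n=\cK_n/(\Delta(\mb x)\Delta(\mb y))$ at the boundary of $\Lambda_n\times\Lambda_n$, where the denominator vanishes. What your chaos argument delivers (once the first obstacle is handled) is $L^2(W)$ convergence of $\cM_n(\mb x,s;\mb y,t)$ along any fixed sequence approaching the diagonal. Upgrading this to an almost sure continuous modification in all $2n$ spatial variables requires uniform moment bounds on increments of $\cM_n$ near and on the boundary and a Kolmogorov--Chentsov argument; this is the main technical content of \cite{LW} and does not follow from continuity of $\cZ$.
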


\begin{remark}
  As in Remark~\ref{r.Z_i vs mathcal Z_i}, we recall that $\cZ_2$ is not simply the continuum analog of $Z_2$, but rather is the partition function of two disjoint paths in the continuum.
\end{remark}

The following lemma relates $\cZ_2$ with $\cK_2$ and is a quick consequence of \cite[Proposition 3.2]{lun2016stochastic}. It will be used to move from Theorem~\ref{thm:disj-BK} to Theorem~\ref{thm:disj-BK-line-ensemble}, for which one needs to quantitatively approximate $\h_{t,2}$ (which is in turn defined in terms of $\cZ_2/\cZ_1$ due to \eqref{e.Z_1 Z_2 relation with h_1 h_2}) by $\mc K_2$.

\begin{lemma}\label{l.K Z identity}
The following holds almost surely. For any real numbers $s<t$, $a<b$, and $c<d$,
\begin{align}\label{e.Z K integral relation}
\log \left(1 + \frac{\mc K_2([(a,s), (b,s)]; [(c,t), (d,t)])}{\mc Z_1((a,s);(d,t))\mc Z_1((b,s);(c,t))}\right) = \int_a^b\int_c^d\frac{\mc Z_2((x,s);(y,t))}{\mc Z_1((x,s);(y,t))^2}\,\diff y\,\diff x.
\end{align}
\end{lemma}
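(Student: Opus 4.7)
The plan is to first algebraically simplify the left-hand side, and then identify the resulting expression with the right-hand side via the identity $\partial_x\partial_y\log\cZ(x,s;y,t) = \cZ_2(x,s;y,t)/\cZ_1(x,s;y,t)^2$ from \cite[Proposition~3.2]{lun2016stochastic}.

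First, unwinding the definition \eqref{e.K definition} of $\cK_2$, we have
\begin{equation*}
\cK_2([(a,s),(b,s)];[(c,t),(d,t)]) = \cZ(a,s;c,t)\cZ(b,s;d,t) - \cZ(a,s;d,t)\cZ(b,s;c,t),
\end{equation*}
so adding $1$ and dividing by $\cZ(a,s;d,t)\cZ(b,s;c,t)$ telescopes the expression inside the logarithm to $\cZ(a,s;c,t)\cZ(b,s;d,t)/[\cZ(a,s;d,t)\cZ(b,s;c,t)]$. Taking the logarithm, the left-hand side of \eqref{e.Z K integral relation} equals the mixed second difference
\begin{equation*}
\bigl[\log\cZ(a,s;c,t)+\log\cZ(b,s;d,t)\bigr] - \bigl[\log\cZ(a,s;d,t)+\log\cZ(b,s;c,t)\bigr].
\end{equation*}

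Next, assuming the required regularity of $(x,y)\mapsto\log\cZ(x,s;y,t)$, which follows from the positivity statement of Lemma~\ref{lem:pos} together with the smoothness results underpinning \cite[Proposition~3.2]{lun2016stochastic} and the continuous extension in Lemma~\ref{lem:ext}, I would rewrite this mixed second difference as an iterated integral of the mixed partial derivative:
\begin{equation*}
\int_a^b\int_c^d \partial_x\partial_y \log\cZ(x,s;y,t)\,\diff y\,\diff x.
\end{equation*}
This is the standard telescoping identity $F(b,d)-F(a,d)-F(b,c)+F(a,c) = \int_a^b\int_c^d \partial_x\partial_y F$ applied to $F(x,y) = \log\cZ(x,s;y,t)$, together with the sign convention on the order of the endpoints (using $a<b$ and $c<d$).

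Finally, \cite[Proposition~3.2]{lun2016stochastic} provides the key identity $\partial_x\partial_y\log\cZ(x,s;y,t) = \cZ_2(x,s;y,t)/\cZ(x,s;y,t)^2$ valid almost surely jointly in the parameters. Substituting this into the integrand yields the right-hand side of \eqref{e.Z K integral relation}. The main obstacle in the above plan is ensuring that the cited identity holds in a strong enough sense (i.e., jointly continuously in $(x,y)$ and everywhere almost surely, not just pointwise almost surely) so as to justify integrating it over the rectangle $[a,b]\times[c,d]$ and recovering the mixed second difference of $\log\cZ$; however, this joint regularity is provided by the continuity statements in the references cited in Section~\ref{sss:pf} (in particular \cite[Theorem~1.1]{LW}) combined with \cite[Proposition~3.2]{lun2016stochastic}.
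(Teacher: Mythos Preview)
Your proof is correct and follows essentially the same route as the paper's: both rewrite the left-hand side as $\log\bigl[\cZ(a,s;c,t)\cZ(b,s;d,t)/(\cZ(a,s;d,t)\cZ(b,s;c,t))\bigr]$ via the determinant definition of $\cK_2$, and both invoke \cite[Proposition~3.2]{lun2016stochastic} (together with the identification of $\cM_2$ with $\cZ_2$ from Lemma~\ref{lem:ext}) to equate this with the double integral. The only difference is cosmetic: the paper cites that proposition as directly yielding the integrated identity, whereas you phrase it as the pointwise differential identity $\partial_x\partial_y\log\cZ=\cZ_2/\cZ_1^2$ and then integrate. Since $\log\cZ$ has only Brownian-type spatial regularity, the classical mixed partial does not exist, so your formulation requires the weak/integrated interpretation anyway---which is exactly what the cited proposition supplies and what the paper invokes. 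Your flagged ``main obstacle'' is thus already resolved by the reference you cite, and no additional work is needed.
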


One can thus move from $\cZ_2$ to $\cK_2$, for example, by take $a$ to be very close to $b$ and $c$ very close to $d$, on a scale say polynomial in $M^{-1}$ for a large parameter $M$. The resulting entropy contribution coming from the integral on the righthand side of \eqref{e.Z K integral relation} will result in a shift of $C\log M$. This is the source of the same shift in Theorem~\ref{thm:disj-BK-line-ensemble}, and the small probability of the event where $\mc Z_1$ or $\mc Z_2$ fluctuate too much on $[a,b]$ or $[c,d]$ produces the $\exp(-cM^2)$ error there, as we will see in Section~\ref{s.log gamma to CDRP}.

\begin{proof}[Proof of Lemma~\ref{l.K Z identity}]
\cite[Proposition 3.2]{lun2016stochastic} is a deterministic statement, which, in the $n=1$ case and applied to the process $\mc M$, yields
\begin{align*}
\log \left(\frac{\mc Z_1((a,s); (c,t))\mc Z_1((b,s);(d,t))}{\mc Z_1((a,s);(d,t))\mc Z_1((b,s);(c,t))}\right) = \int_a^b\int_c^d\frac{\mc Z_2((x,s);(y,t))}{\mc Z_1((x,s);(y,t))^2}\,\diff y\,\diff x;
\end{align*}
here we have implicitly used the identity relating $\mc M_2$ with $\mc Z_2$ (Lemma~\ref{lem:ext}). Rewriting the lefthand side using \eqref{e.K definition} completes the proof.
\end{proof}

\begin{remark}
  The lefthand side of \eqref{e.Z K integral relation} can be regarded as a positive temperature version of the ``difference profile'' \cite{BGH,ganguly2023local,GZ22,dauvergne2023last}, while the righthand side can be regarded as a positive temperature analogue of the shock measure \cite{bates2022hausdorff}. These two objects have been related at the zero temperature level in a number of works such as \cite{bates2022hausdorff}.
\end{remark}

\subsection{Line ensemble facts}

Here we collect a few statements on the KPZ line ensemble. First is a certain stochastic monotonicity property. The finite case is just \cite[Theorem 2.7]{GH22}, while the interval case is a quick consequence of the finite case that we prove in Appendix~\ref{s.misc proofs}.

\begin{lemma}[Monotonicity in conditioning]\label{l.conditional monotonicity}
Let $\cD\subset\R$ be an interval or a finite set of points. 
In the case that $\mc D$ is an interval, there exists a set $\Omega\subseteq \mc C(\mc D, \R)$ which has probability 1 under the law of a rate 2 Brownian motion on $\mc D$ with standard normal starting point, and in the case $\mc D$ is finite, we may take $\Omega = \R^{\mc D}$, such that the following holds.

If $f,g\in\Omega$ with $f(x)\geq g(x)$ for all $x\in \cD$, then the conditional law of $\h_{t}$ given $\h_{t,1}|_{\mc D} = f$ stochastically dominates the same given $\h_{t,1}|_{\mc D} = g$.
\end{lemma}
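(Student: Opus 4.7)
My plan is to deduce the interval case from the finite case (which is \cite[Theorem 2.7]{GH22}) by a discretization-and-limit argument. First, I would fix an increasing sequence $\mc D_1 \subset \mc D_2 \subset \cdots$ of finite subsets of $\mc D$ whose union is dense in $\mc D$. Since $\h_{t,1}$ has continuous sample paths, the $\sigma$-algebras $\mc F_n := \sigma(\h_{t,1}|_{\mc D_n})$ increase to $\mc F := \sigma(\h_{t,1}|_{\mc D})$. Next, I would fix a countable family $\{\Phi_k\}_{k \geq 1}$ of bounded, increasing, continuous functionals on the space of line ensembles (Polish under uniform convergence on compacts) which characterizes stochastic order; such a family exists by standard Polish-space arguments.

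For each $k$, define via regular conditional distributions
\[
\Psi^{(k)}(f) := \E[\Phi_k(\h_t) \mid \h_{t,1}|_{\mc D} = f], \qquad \Psi^{(k)}_n(f) := \E[\Phi_k(\h_t) \mid \h_{t,1}|_{\mc D_n} = f|_{\mc D_n}].
\]
The finite case gives deterministic monotonicity of $\Psi^{(k)}_n$ in $f|_{\mc D_n}$ for every $f \in \mc C(\mc D, \R)$ (we may take $\Omega_n = \R^{\mc D_n}$). By Levy's upward martingale convergence theorem, $\Psi^{(k)}_n(\h_{t,1}|_{\mc D}) \to \Psi^{(k)}(\h_{t,1}|_{\mc D})$ almost surely for each $k$, so taking a countable intersection,
\[
\Omega_0 := \bigcap_k \Bigl\{f \in \mc C(\mc D, \R) : \Psi^{(k)}_n(f) \to \Psi^{(k)}(f) \text{ as } n \to \infty\Bigr\}
\]
has full measure under the marginal law of $\h_{t,1}|_{\mc D}$.

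To finish, I would invoke the Brownian Gibbs property of the KPZ line ensemble: the marginal of $\h_{t,1}$ on $\mc D$ is mutually absolutely continuous with the law of a rate two Brownian motion on $\mc D$ started at a standard normal, so $\Omega_0$ also has full measure under the latter and we may take $\Omega := \Omega_0$. For any $f, g \in \Omega$ with $f \geq g$ pointwise, the deterministic finite-case monotonicity gives $\Psi^{(k)}_n(f) \geq \Psi^{(k)}_n(g)$ for every $n$ and $k$ (since $f|_{\mc D_n} \geq g|_{\mc D_n}$), and passing to the limit yields $\Psi^{(k)}(f) \geq \Psi^{(k)}(g)$ for every $k$, which establishes the desired stochastic domination. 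The main subtlety will be ensuring that a single probability 1 set $\Omega$ works simultaneously for every test functional and every dominated partner $g$; this is handled cleanly by taking the countable intersection over $k$ and specifying $\Omega$ in advance, so that any pair in $\Omega$ is an admissible comparison.
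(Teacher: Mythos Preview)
Your proposal is correct and follows essentially the same route as the paper: reduce to the finite case (which is \cite[Theorem 2.7]{GH22}), take an increasing sequence of finite dense subsets, use L\'evy's martingale convergence to pass the finite-set inequality to the limit, intersect over a countable family to obtain a single probability-one set $\Omega$, and invoke the Brownian Gibbs property for the absolute continuity needed to transfer ``probability one'' to the reference Brownian measure. The only cosmetic difference is that the paper phrases the countable family as a generating class of increasing \emph{events} (citing \cite[Lemma~6]{barbato2005fkg} for the approximation step), whereas you phrase it as a determining class of bounded increasing continuous \emph{functionals}; these are equivalent packagings of the same step.
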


Next is a lower bound on the upper tail. 
We state it in terms of the scaled version $\hh_{t,1}$ and $\hh_{t,2}$ as defined in \eqref{e.scaling for KPZ}.  The proof in the case that $t$ is bounded away from zero was given in \cite{GH22}, and the case where $t$ is arbitrarily small was outlined in \cite[Appendix C]{ganguly2023brownian}. The proofs are reproduced in Appendix~\ref{s.misc proofs} for completeness.

\begin{proposition}[Lower bound on one-point upper tail]\label{p.lower bound on upper tail}
There exists $L_0$ such that, for all $t,L>0$ such that $L > (t^{-1/6}\vee 1)L_0$,
\begin{align*}
\P\left(\hh_{t,1}(0) > L\right) \geq \exp(-5L^{3/2}).
\end{align*}
\end{proposition}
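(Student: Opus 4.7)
The target bound $\exp(-5L^{3/2})$ lies well above the sharp one-point KPZ upper-tail asymptotic $\exp(-(\tfrac{4}{3}+o(1))L^{3/2})$, so the argument has significant slack. My plan is to split based on whether $t$ is bounded below or small, matching the two references cited in the statement.

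For $t \geq 1$ (macroscopic regime): After the trivial rescaling implicit in our choice of SHE coefficient (see the footnote relating $\cZ$ to $\tilde\cZ$), $\hh_{t,1}(0)$ is a constant shift of the standard centered, scaled KPZ height at time $2t$. I would invoke the sharp lower bound on the one-point upper tail of the KPZ equation, established using the Amir--Corwin--Quastel Fredholm determinant formula followed by steepest-descent/saddle-point analysis. These give $\P(\hh_{t,1}(0) > L) \geq \exp(-(\tfrac{4}{3}+o_L(1))L^{3/2})$, with the $o_L(1)$ uniform in $t \geq 1$, so choosing $L_0$ large enough that $\tfrac{4}{3}+o_L(1) < 5$ yields the conclusion in this regime. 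This is the argument in \cite{GH22}.

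For $0 < t < 1$ with $L > L_0 t^{-1/6}$ (short-time regime): Here classical Tracy--Widom-type asymptotics do not apply directly, since the fluctuations of $\h_{t,1}(0) = \log\cZ(0,0;0,t)+t/12$ cross over into a quasi-Gaussian (Edwards--Wilkinson) regime of scale $\sim t^{1/4}$, i.e.\ $\hh_{t,1}(0)$ has scale $\sim t^{-1/12}$. I would follow \cite[Appendix C]{ganguly2023brownian}: combine exact moment formulas for $\cZ(0,0;0,t)$ (available from the Bertini--Cancrini / Wiener chaos representation) with a truncated Paley--Zygmund-type argument applied to $\cZ(0,0;0,t)$ itself to obtain a Gaussian-type lower bound of the form $\P(\hh_{t,1}(0) > L) \geq \exp(-cL^2 t^{1/6})$. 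Paired with the KPZ-type lower bound $\exp(-cL^{3/2})$ obtained from the integrable formulas once $L$ is sufficiently large, these two estimates together give the desired $\exp(-5L^{3/2})$ throughout the range $L > L_0 t^{-1/6}$.

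The main obstacle is handling the crossover near $L \sim t^{-1/3}$, where the Gaussian-type bound $\exp(-cL^2 t^{1/6})$ and the KPZ-type bound $\exp(-cL^{3/2})$ have comparable strength and the implied constants must be tracked carefully between sub-regimes. The hypothesis $L > L_0 t^{-1/6}$ in the statement is calibrated exactly so that we are always above the threshold at which the KPZ universality regime takes over from the Gaussian one, ensuring that at least one of the two methods yields the desired $L^{3/2}$-exponent bound in every sub-regime. Verifying this calibration, together with extracting the integrable-formula constants at the level required to beat $5$ (rather than merely $\tfrac{4}{3}+o(1)$), is the main technical content of the proof.
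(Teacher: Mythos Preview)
Your proposal takes a completely different route from the paper, and in fact mischaracterizes what \cite{GH22} does. The paper's argument is not based on integrable formulas at all; it is a soft resampling argument using the Brownian Gibbs property of the KPZ line ensemble, and it handles all $t>0$ in one stroke.

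Here is the paper's proof. Fix $M=C(t^{-1/6}\vee 1)$ and consider the favourable event
\[
\fav_t=\bigl\{\hh_{t,1}(-L^{1/2})\ge -L-M\bigr\}\cap\bigl\{\hh_{t,1}(L^{1/2})\ge -L-M\bigr\}.
\]
By stationarity of $x\mapsto\hh_{t,1}(x)+x^2$ and the FKG inequality (Lemma~\ref{l.fkg}), $\P(\fav_t)\ge\P(\hh_{t,1}(0)>-M)^2$, and the one-point \emph{lower} tail bound (Proposition~\ref{lem:fh-lt}) makes this at least $1/2$ for $C$ large. Now condition on $\Fext(1,[-L^{1/2},L^{1/2}])$. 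By the Gibbs property (Lemma~\ref{lem:Gibbs}) and monotonicity in boundary data (Lemma~\ref{l.monotonicity}), on $\fav_t$ the conditional law of $\hh_{t,1}$ on $[-L^{1/2},L^{1/2}]$ stochastically dominates a rate-$2$ Brownian bridge $B$ from $(-L^{1/2},-L-M)$ to $(L^{1/2},-L-M)$ with no lower boundary. Then $B(0)$ is Gaussian with mean $-L-M$ and variance $L^{1/2}$, so
\[
\P(\hh_{t,1}(0)\ge L)\ge \tfrac12\,\P(B(0)\ge L)\ge cL^{-3/4}\exp\Bigl(-\frac{(2L+M)^2}{2L^{1/2}}\Bigr)\ge \exp(-5L^{3/2})
\]
once $L>M$, which is exactly the hypothesis $L>L_0(t^{-1/6}\vee 1)$. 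The role of the assumption $L>L_0 t^{-1/6}$ is thus not a Gaussian/KPZ crossover calibration as you suggest, but simply to guarantee $L>M$ so the Gaussian tail computation closes with constant $5$.

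Compared with your plan, the paper's argument is far more economical: no Fredholm determinant asymptotics, no moment formulas, no Paley--Zygmund, and no case splitting on $t$. The only external input is the lower-tail estimate of Proposition~\ref{lem:fh-lt}. Your route could perhaps be pushed through, but as written it has real gaps: the Paley--Zygmund step does not obviously produce a bound of the shape $\exp(-cL^2t^{1/6})$ for the \emph{upper} tail, and the claimed ``KPZ-type lower bound $\exp(-cL^{3/2})$ from integrable formulas'' uniformly for small $t$ and $L\gg t^{-1/3}$ is not a result you can simply cite. Finally, your attribution is backwards: the argument in \cite{GH22} is precisely the Gibbs-property argument reproduced here, not a steepest-descent analysis.
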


\subsection{Regularity estimates}
Here we collect some estimates on the regularity of the curves $\h_{t,j}$ of the KPZ line ensemble as well as of the KPZ sheet. Since their proofs are immediate consequences of existing estimates, we defer them to Appendix~\ref{s.misc proofs}. We start by recalling the definition of the KPZ sheet.

Recall the solution to the stochastic heat equation \eqref{e.SHE definition} $(x,s;y,t)\mapsto \cZ(x,s;y,t)$. We define the \emph{KPZ sheet} to be the random continuous function $\h: \R^4_{\shortuparrow}\to \R$ given by
\begin{align*}
\h(x,s;y,t) := \log \cZ(x,s;y,t) + \frac{t-s}{12};
\end{align*}
the righthand side is almost surely positive for all $(x,s;y,t) \in \R^4_{\shortuparrow}$ simultaneously by \cite[Theorem~2.2]{AJRS}, so $\h$ is well-defined.

\begin{lemma}\label{l.two-point supremum for line ensemble}
There exist $C, c>0$ such that, for all $j\in\N$, $x,y\in\R$, $t>0$, and $K\geq 0$,
\begin{align*}
\P\left(\sup_{x,y\in[0,K], |x-y|\leq \varepsilon}|\hh_{t,j}(x) - \hh_{t,j}(y) + x^2-y^2| \geq M\varepsilon^{1/2}(\log \varepsilon^{-1})^{1/2}\right) \leq C(K+1)\exp(-cM^2).
\end{align*}
\end{lemma}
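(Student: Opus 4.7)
The plan splits the estimate into a reduction step and a modulus-of-continuity step on a unit interval. First I would invoke the shear invariance of $\cZ$ from Lemma~\ref{l.Z symmetries}: applied with an appropriate choice of $\nu$, it implies (as noted in the paper just below \eqref{e.Z_1 Z_2 relation with h_1 h_2}) that $x \mapsto \h_{t,1}(x) + x^2/t$ is stationary in $x$, and the analogous statement for $\h_{t,2}$ follows from the same transformation applied to $\cK_2$. Passing through the scaling \eqref{e.scaling for KPZ}, both $x \mapsto \hh_{t,1}(x) + x^2$ and $x \mapsto \hh_{t,2}(x) + x^2$ are stationary. Consequently the increment $\hh_{t,j}(x) - \hh_{t,j}(y) + x^2 - y^2$ has a distribution depending only on $x-y$, and a union bound over $\lceil K \rceil + 1$ unit subintervals of $[0,K]$ reduces the claim to the case $K = 1$ and produces the $(K+1)$ prefactor.

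For the unit-interval estimate, I would leverage the $H$-Brownian Gibbs property of the KPZ$_t$ line ensemble from \cite{CH14}. Conditioning on the values of $\hh_{t,j}$ at the endpoints $0$ and $1$ (together with the curve $\hh_{t,j+1}$), the conditional law of $\hh_{t,j}|_{[0,1]}$ is a Brownian bridge of an appropriate diffusivity reweighted by a Radon-Nikodym derivative of the form $\exp(-\int_0^1 e^{\hh_{t,j+1}-\hh_{t,j}})/Z$, which is bounded above by $1$. On a good event $\mathsf{G}_M$ where $|\hh_{t,j}(0)|, |\hh_{t,j}(1)| \le M/2$ and $\hh_{t,j+1}|_{[0,1]}$ is not too large (both occurring with probability $1 - O(\exp(-cM^2))$ by uniform one-point tail bounds, which themselves follow from Proposition~\ref{p.lower bound on upper tail} together with Gaussian upper-tail bounds in the references cited earlier in this section), the conditional law is stochastically dominated by a Brownian bridge with shifted endpoints of absolute value at most $M/2$. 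The claim then follows from the classical Brownian-bridge modulus of continuity estimate: for a standard Brownian bridge $B$ on $[0,1]$,
\begin{align*}
\P\!\left(\sup_{|x-y|\le\varepsilon,\, x,y\in[0,1]}|B(x)-B(y)| \ge M\varepsilon^{1/2}(\log\varepsilon^{-1})^{1/2}\right) \le C\exp(-cM^2),
\end{align*}
which is standard (e.g., via a chaining argument). Taking expectation over the endpoint values, noting that the parabolic correction $x^2-y^2$ contributes at most $O(\varepsilon)$ uniformly on $[0,1]$ and is harmlessly absorbed, yields the bound on $\mathsf{G}_M$, and the complement is already controlled by $O(\exp(-cM^2))$.

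The main obstacle is the uniformity of constants in $t$, particularly for small $t$. For $t$ bounded below, the one-point tails and Gibbs-based comparison arguments are either classical or were recorded in \cite{GH22}. For small $t$, one must instead work with the unscaled $\h_{t,j}$ on an interval of length $t^{2/3}$ and rely on short-time estimates for the SHE and Gaussian upper/lower-tail bounds for the KPZ equation; this follows the scheme of \cite[Appendix~C]{ganguly2023brownian}, which the authors reproduce in Appendix~\ref{s.misc proofs}. Since the statement's form of the modulus and the Gaussian tail in $M$ both persist under the scaling \eqref{e.scaling for KPZ} (the key point is that Brownian-scaled increments are compatible with the $t^{-1/3}$ and $t^{2/3}$ rescaling), the small-$t$ analysis yields constants $C, c$ that are uniform in $t > 0$, completing the proof.
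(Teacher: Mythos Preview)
Your approach has a genuine gap: the statement is for \emph{all} $j\in\N$, but your Gibbs-based argument requires one-point tail bounds of the form $\P(|\hh_{t,j}(0)|>M/2)\le C\exp(-cM^2)$ uniformly in $j$ (and in $t$), as well as control on $\sup_{[0,1]}\hh_{t,j+1}$. These are not supplied by the references you cite. Proposition~\ref{p.lower bound on upper tail} is a \emph{lower} bound on $\P(\hh_{t,1}(0)>L)$, not an upper bound on $|\hh_{t,1}(0)|$, and in any case concerns only $j=1$; Proposition~\ref{lem:fh-lt} is likewise only for $j=1$. For $j\ge 2$ the single-curve resampling of $\hh_{t,j}$ also involves the curve $\hh_{t,j-1}$ above, which contributes a second factor $\exp(-\int e^{\hh_{t,j}-\hh_{t,j-1}})$ to the Radon--Nikodym derivative that you omit; controlling the normalizing constant then requires a priori separation estimates between consecutive curves, again not available uniformly in $j$ and $t$. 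Finally, ``stochastic domination by a Brownian bridge'' is not the right notion here: what you need is a bound on the Radon--Nikodym derivative (equivalently, a lower bound on the partition function), which is a different and harder statement. Without these inputs your argument does not close even for $j=2$.

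The paper takes a completely different and much shorter route: it invokes \cite[Theorem~2.8(ii)]{wu2024applications}, a modulus-of-continuity estimate valid for any line ensemble in the log-concave class $\mathrm{LC}$, with constants depending only on membership in $\mathrm{LC}$ and not on the curve index or any model parameter. Since the O'Connell--Yor diffusion lies in $\mathrm{LC}$ by \cite[Proposition~3.4]{wu2024applications} and $\mathrm{LC}$ is closed under weak limits and constant shifts, the KPZ line ensemble $\hh_t$ lies in $\mathrm{LC}$ for every $t>0$; the result then follows from stationarity of $\hh_{t,j}(x)+x^2$. This black-box route sidesteps all the $j$- and $t$-uniformity issues that your approach would need to resolve from scratch.
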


\begin{lemma}\label{l.two point for kpz sheet}
There exist $C,c>0$ such that for any $t>0$, $\varepsilon>0$, $K\geq 0$, and $M>0$,
\begin{align*}
\P\left(\sup_{\substack{|x_1|,|x_2|,|y_1|,|y_2| \leq K\\ \|(x_1,y_1) - (x_2,y_2)\| \leq \varepsilon}}|\h(x_1,0;y_1,t) - \h(x_2,0;y_2,t)| \geq M\varepsilon^{1/2}(\log\varepsilon^{-1})^{1/2}\right) \leq C(K+1)^2\exp(-cM^2).
\end{align*}

\end{lemma}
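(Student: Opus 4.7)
The plan is to combine a two-point tail estimate on increments of the KPZ sheet with a standard chaining/Kolmogorov--Chentsov argument, and then union-bound over a cover of $[-K,K]^2$ by unit boxes using shift invariance.

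First, I would record or cite the required one-increment tail bound: for any $(x_1,y_1), (x_2,y_2) \in \R^2$ with $\delta := \|(x_1,y_1)-(x_2,y_2)\| \leq 1$,
\begin{equation*}
\P\Bigl(|\h(x_1,0;y_1,t)-\h(x_2,0;y_2,t)| \geq M\delta^{1/2}\Bigr) \leq C\exp(-cM^2),
\end{equation*}
with $C,c$ depending only on $t$ (and uniform for $t$ in compacts; the scaling in $t$ is not needed for this lemma since $t$ is fixed). This is available from the existing literature on regularity of the CDRP free energy and the KPZ sheet (for example the moment/Mehler-type estimates used in \cite{alberts2014continuum} and \cite{AJRS}, or the estimates underlying the continuity of $\cZ$ asserted in \cite{LW}); it is the natural two-parameter analog of the one-point increment estimate underlying Lemma~\ref{l.two-point supremum for line ensemble}.

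Next, fix a unit box $B=[a,a+1]\times[b,b+1]\subseteq\R^2$ and consider the process $g(x,y):=\h(x,0;y,t)$ restricted to $B$. Applying a standard two-dimensional Kolmogorov--Chentsov/dyadic chaining argument (cover $B$ by a grid at scale $2^{-k}$ for each $k\geq 0$, apply the increment bound at every scale, and sum the resulting geometric series in $M^2$) converts the sub-Gaussian pointwise increment bound with variance proxy $\delta$ into the modulus-of-continuity bound
\begin{equation*}
\P\Biggl(\sup_{\substack{(x_1,y_1),(x_2,y_2)\in B\\ \|(x_1,y_1)-(x_2,y_2)\|\le\varepsilon}}|g(x_1,y_1)-g(x_2,y_2)|\ge M\varepsilon^{1/2}(\log\varepsilon^{-1})^{1/2}\Biggr)\le C\exp(-cM^2),
\end{equation*}
with possibly adjusted constants. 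The extra $(\log\varepsilon^{-1})^{1/2}$ factor is the standard cost of converting a pointwise sub-Gaussian bound into a uniform one over a set of metric-entropy $\log\varepsilon^{-1}$.

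Finally, I would cover $\{(x,y):|x|,|y|\le K\}$ by at most $C(K+1)^2$ unit boxes. By the shift invariance of $\cZ$ (Lemma~\ref{l.Z symmetries}), the law of $g$ restricted to each such box (after the deterministic parabolic reparametrization present in the definition of $\h$, which is uniformly bounded in $C^\infty$ on a unit box) is comparable to that of $g$ on a fixed unit box, so the per-box estimate above applies with constants independent of the box. Union-bounding produces the $C(K+1)^2\exp(-cM^2)$ factor on the right-hand side and completes the proof.

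The only non-trivial step is the two-point increment tail that seeds the chaining; the paper already flags that the lemma is an ``immediate consequence of existing estimates,'' so the main task is simply to point to the correct reference in the literature (the chaining itself and the cover-and-union-bound step are entirely standard).
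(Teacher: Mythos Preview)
Your overall architecture (two-point sub-Gaussian increment bound, then chaining, then union-bound over unit boxes) matches the paper's. The one substantive difference is in how the seed two-point estimate is obtained. You propose to cite it from the literature (\cite{alberts2014continuum}, \cite{AJRS}, \cite{LW}) and explicitly allow the constants to depend on $t$; but the lemma asserts constants uniform in $t>0$, and the references you name do not obviously deliver this. The paper instead derives the two-point bound internally: it invokes the one-coordinate increment estimate for $\hh_{t,j}$ (Lemma~\ref{l.two-point for line ensemble}, itself a consequence of \cite{wu2024applications} and membership of $\hh_t$ in the log-concave class, with constants independent of $t$), and then uses the reflection symmetry $\h(x,0;y,t)\stackrel{d}{=}\h(y,0;x,t)$ from Lemma~\ref{l.Z symmetries} together with the triangle inequality to promote the one-coordinate bound to a two-coordinate one. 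This route is both more self-contained and automatically $t$-uniform. Apart from this, the chaining step and the $(K+1)^2$ union bound are the same in both proofs.
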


\section{From log-gamma polymers to the KPZ equation}\label{s.log gamma to CDRP}

In this section we use Theorem~\ref{thm:disj-BK-log-g-new} to give the proofs of Theorems~\ref{thm:disj-BK-line-ensemble}, \ref{thm:disj-BK-line-ensemble-technical}, and \ref{thm:disj-BK}. For Theorem~\ref{thm:disj-BK} we essentially just need to take the scaling limit of the log-gamma partition function to the CDRP partition function, for which we start by setting up the scalings in the next section. Then we prove Theorem~\ref{thm:disj-BK} in Section~\ref{s.cdrp bk proof} and Theorems~\ref{thm:disj-BK-line-ensemble} and \ref{thm:disj-BK-line-ensemble-technical} in Section~\ref{s.kpz line ensemble bk proof}.

\subsection{Scalings for log gamma} Recall from Section~\ref{s.log gamma} that $\theta$ is the parameter associated to the inverse-gamma distribution. For a scaling parameter $n$, we set $\theta = 2n^{1/2}$ and, for $\mb u, \mb v\in\Z^2$ which are ordered $\mb u\leq \mb v$ component-wise, let $Z^n_1(\mb u, \mb v)$ be the partition function as defined in \eqref{e.Z_i defintion} with this value of $\theta$. Recall $\R^4_{\shortuparrow}$ from Section~\ref{sss:pf}. Let $\mc Z^{n}$ be defined by
\begin{align}\label{e.Z^n definition}
\mc Z^{n}(x,s; y,t) = Z^n_1\left(ns+2n^{1/2}x, ns; nt+2n^{1/2}y, nt\right)\cdot n^{1/2}\cdot \left(2^{-1}(2n^{1/2}-1)\right)^{2n(t-s)+2n^{1/2}(y-x)}
\end{align}
for all $(x,s;y,t)\in\R^4_{\uparrow}$ such that the arguments of $Z^n_1$ are all integers; for all other $(x,s;y,t)\in\R^4_{\uparrow}$, we extend the definition by linear interpolation.

The following lemma was essentially proven in \cite{alberts2014intermediate}, and a similar statement was given in \cite[Proposition 5.10]{wu2019tightness}, so we will only outline its proof.

\begin{lemma}\label{l.Z joint convergence}
$\mc Z^n \to \mc Z$ weakly in the topology of uniform convergence on compact sets of $\R^4_{\uparrow}$ as $n\to\infty$.
\end{lemma}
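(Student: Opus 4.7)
The plan is the standard two-step functional convergence strategy: establish convergence of finite-dimensional marginals, and then prove tightness in $C(\R^4_\uparrow)$ with the topology of uniform convergence on compacta. Any subsequential weak limit will then be identified with $\mc Z$ via uniqueness of finite-dimensional distributions.

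For the finite-dimensional convergence, I would invoke the polynomial chaos expansion of \cite{alberts2014intermediate}. Under the choice $\theta = 2n^{1/2}$ the inverse-gamma vertex weights have mean $(2n^{1/2}-1)^{-1}$, so subtracting the mean from each weight produces centered variables whose variance is of order $n^{-1/2}$. Writing $Z^n_1$ as a sum over collections of vertices where one picks out the centered part, the $m$-th term in the resulting discrete chaos expansion is a sum over $m$-tuples of lattice sites. The exponential prefactor $(2^{-1}(2n^{1/2}-1))^{2n(t-s)+2n^{1/2}(y-x)}$ removes exactly the contribution of the expected weight along any up-right path, while the $n^{1/2}$ normalization matches the discrete Newton's formula for random-walk path counts to the Gaussian heat kernel. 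Standard Riemann-sum convergence of the $m$-th discrete chaos to the $m$-th Wiener chaos (as in \cite{alberts2014intermediate}), together with uniform $L^2$ bounds on the tails of the chaos expansion, then gives convergence in distribution of $\mc Z^n(x,s;y,t)$ to the chaos representation \eqref{e.Z_k continuum} of $\mc Z$ for each fixed $(x,s;y,t)$. The same computation carried out coordinatewise for any finite collection of space-time points yields joint finite-dimensional convergence, since the discrete weights used for different time intervals are independent and match the independence structure of $\xi$.

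For tightness, I would use Kolmogorov's criterion applied to the four-parameter family. \cite[Proposition 5.10]{wu2019tightness} already supplies spatial tightness (in $x$ and $y$ with $s,t$ fixed) via moment bounds on spatial increments of $\mc Z^n$. Analogous temporal-increment moment bounds can be derived by using the discrete Chapman--Kolmogorov identity to write $\mc Z^n(x,s_1;y,t) - \mc Z^n(x,s_2;y,t)$ as a convolution of partition functions over the strip between the two time coordinates, and then applying the chaos $L^2$ bound to that strip; this reproduces the $|s_1-s_2|^{1/4-}$ Hölder modulus of the SHE solution. Combining the spatial and temporal increment bounds via the triangle inequality yields a joint modulus of continuity sufficient for Kolmogorov tightness on every compact subset of $\R^4_\uparrow$. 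The main technical obstacle is ensuring the temporal increment bounds remain uniform as $s\to t$, where the delta initial condition makes $\mc Z^n$ singular; one treats this by bounding $(t-s)^{1/2}\mc Z^n(x,s;y,t)$ and its increments rather than $\mc Z^n$ directly, mirroring the corresponding estimates for $\mc Z$. Combining tightness with finite-dimensional convergence gives the lemma.
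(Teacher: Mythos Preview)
Your approach is correct and essentially the same as the paper's, which simply cites \cite[Theorem~2.7]{alberts2014intermediate} for the full four-parameter convergence together with the observation from \cite{wu2019tightness} that the chaos argument extends when the rescaled inverse-gamma variance merely converges to~$1$, rather than re-deriving the chaos and tightness steps as you outline. One small slip: the variance of the centered weight $X-\EE X$ is of order $n^{-3/2}$, not $n^{-1/2}$; the quantity with variance of order $n^{-1/2}$ is $X/\EE X - 1$, which is what actually enters the chaos expansion after the deterministic prefactor extracts the mean along each path.
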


\begin{proof}[Proof outline]
\cite[Theorem 2.7]{alberts2014intermediate} asserts the desired convergence for the full space-time field in a general polymer model where the vertex weight distributions have mean zero and variance one, which is not the case for the (rescaled) inverse gamma random variables we have. \cite[Proposition 5.10]{wu2019tightness} obtains the same statement for the marginal where $x=s=0$ in the case of rescaled inverse gamma random variables by observing that the argument of \cite{alberts2014intermediate} also holds if the variance converges to $1$ appropriately quickly (as $n\to\infty$). This observation does not rely on $x=s=0$ and so the same observation applied to the full convergence result of \cite[Theorem 2.7]{alberts2014intermediate} gives our result.
\end{proof}

\noindent Next we define the object which will converge to $\cK_2$ (recall its definition from \eqref{e.K definition}). First, let 
$$\R^6_{\shortuparrow, \leq} := \Bigl\{(x_1, x_2, s; y_1,y_2, t) : t >s \geq 0; (x_1,x_2), (y_1,y_2) \in \Lambda_2\Bigr\}.$$
Define  $\cK_2^n:\R^6_{\shortuparrow, \leq} \to \R$ by
\begin{equation}\label{e.K^n definition}
\begin{split}
  \MoveEqLeft[1]
\mc K^n_2\left([(x_1,s), (x_2,s)]; [(y_1,t), (y_2,t)]\right)\\
&= T\left([(ns+2n^{1/2}x_1, ns), (ns+2n^{1/2}x_2, ns)]; [(nt+2n^{1/2}y_1, nt), (nt+2n^{1/2}y_2, nt-1)]\right)\\
&\qquad\times n\times (2^{-1}(2n^{1/2}-1))^{4n(t-s)-1+2n^{1/2}(y_1+y_2-x_1-x_2)}
\end{split}
\end{equation}
for all $(x_1,x_2,y_1, y_2, s, t)$ such that $t\geq s+\frac{1}{n}$ and all the arguments of $T$ are integers, and for all other $(x_1,x_2,y_1, y_2, s, t)$ in the domain by linear interpolation. 

Here we use $nt-1$ in the second coordinate instead of $nt$ as in the first so as to match what appears in Corollary~\ref{c.bk coupling log gamma new}. It is because of this extra $-1$ term that the constant rescaling factor in \eqref{e.K^n definition} (i.e., the final line) contains an extra $-1$ in the exponent compared to the square of the analogous rescaling factor in \eqref{e.Z^n definition}, as can be seen by performing the substitutions $t\mapsto t-\frac{1}{n}$ and $y\mapsto y+\frac{1}{2n^{1/2}}$ in \eqref{e.Z^n definition}. An important point is that the fluctuations of the quantity in the first line of \eqref{e.K^n definition} after replacing $nt-1$ by $nt$ differs by a $1+o(1)$ factor, modulo the just mentioned deterministic modification in the scaling. Thus, as we see next, in the limit the distinction is not present.

In the following we regard $\cK_2^n$ as a continuous function defined on all of $\R^6_{\shortuparrow, \leq}$ by embedding the above definition and extending to the remainder of the domain in an arbitrary manner that respects continuity.

\begin{lemma}\label{l.K convergence}
As $n\to\infty$, in the topology of uniform convergence on compact sets of $\R^4_{\shortuparrow}\times \R^6_{\shortuparrow, \leq}$,
$\smash{(\cZ^n,\mc K^n_2) \stackrel{d}{\to} (\cZ, \mc K_2)}.$
\end{lemma}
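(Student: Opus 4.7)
The plan is to express $\mc K^n_2$ as an explicit algebraic combination of $\mc Z^n$ evaluated at nearby points using the Lindström--Gessel--Viennot identity, and then pass to the limit using Lemma~\ref{l.Z joint convergence} together with the continuity of $\mc Z$.

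First, for any $(x_1,x_2,s;y_1,y_2,t)\in\R^6_{\shortuparrow,\leq}$ for which the arguments inside $T$ in \eqref{e.K^n definition} are integers, Lemma~\ref{lem:disj-alter-def} gives a $2\times2$ determinant expression for $T([\mb u_1,\mb u_2],[\mb v_1,\mb v_2])$ in terms of single-path partition functions. Writing $r_n := 2^{-1}(2n^{1/2}-1)$, the definition \eqref{e.Z^n definition} of $\mc Z^n$ can be inverted to express each $T_1((ns+2n^{1/2}x_i,ns),(nt+2n^{1/2}y_j,nt))$ as $n^{-1/2}r_n^{-2n(t-s)-2n^{1/2}(y_j-x_i)}\,\mc Z^n(x_i,s;y_j,t)$; likewise for the endpoints on the row $nt-1$, the same formula applies with the substitutions $t\mapsto t-\tfrac1n$ and $y\mapsto y+\tfrac1{2n^{1/2}}$, so that
\[
T_1((ns+2n^{1/2}x_i,ns),(nt+2n^{1/2}y_j,nt-1)) = n^{-1/2}r_n^{1-2n(t-s)-2n^{1/2}(y_j-x_i)}\,\mc Z^n\!\left(x_i,s;y_j+\tfrac1{2n^{1/2}},t-\tfrac1n\right).
\]

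The key algebraic miracle is that the exponents of $r_n$ appearing in both diagonal products $T_1(\mb u_1,\mb v_1)T_1(\mb u_2,\mb v_2)$ and $T_1(\mb u_1,\mb v_2)T_1(\mb u_2,\mb v_1)$ coincide (since $(y_1-x_1)+(y_2-x_2)=(y_2-x_1)+(y_1-x_2)$), and after multiplying by the prefactor $n\cdot r_n^{4n(t-s)-1+2n^{1/2}(y_1+y_2-x_1-x_2)}$ of \eqref{e.K^n definition}, all the $n^{1/2}$ and $r_n$ factors cancel exactly. I therefore expect to obtain, at all lattice points in the domain,
\begin{equation}\label{e.K^n clean identity}
\begin{split}
\mc K^n_2([(x_1,s),(x_2,s)];[(y_1,t),(y_2,t)])
&= \mc Z^n(x_1,s;y_1,t)\,\mc Z^n\!\left(x_2,s;y_2+\tfrac1{2n^{1/2}},t-\tfrac1n\right)\\
&\quad - \mc Z^n(x_2,s;y_1,t)\,\mc Z^n\!\left(x_1,s;y_2+\tfrac1{2n^{1/2}},t-\tfrac1n\right).
\end{split}
\end{equation}
Since both sides are continuous on $\R^6_{\shortuparrow,\leq}$ (the left via the linear interpolation stipulated after \eqref{e.K^n definition}, the right automatically), and the lattice becomes asymptotically dense, the identity essentially extends to all points up to an interpolation error that vanishes uniformly on compacts as $n\to\infty$.

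For the passage to the limit, I would invoke Skorokhod's representation on the Polish space $\mc C(\R^4_{\shortuparrow},\R)$ (with the uniform-on-compacts topology) to realize the convergence $\mc Z^n\to\mc Z$ from Lemma~\ref{l.Z joint convergence} almost surely. Because $\mc Z$ is continuous and a.s.\ positive (Lemma~\ref{l.Z symmetries}), the shifts $y_2\mapsto y_2+\tfrac1{2n^{1/2}}$ and $t\mapsto t-\tfrac1n$ in \eqref{e.K^n clean identity} are absorbed by the modulus of continuity of $\mc Z$ on compact sets. Combining this with \eqref{e.K^n clean identity} and the definition \eqref{e.K definition} of $\mc K_2$, one concludes that $\mc K^n_2\to \mc K_2$ a.s.\ uniformly on compacts, jointly with $\mc Z^n\to\mc Z$. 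This gives the joint weak convergence.

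The only genuinely delicate step is verifying the scaling bookkeeping that produces the clean identity \eqref{e.K^n clean identity}; everything else (the LGV expansion, the Skorokhod coupling, and the continuity argument) is routine. No new probabilistic input beyond Lemma~\ref{l.Z joint convergence} should be needed.
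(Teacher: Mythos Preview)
Your proposal is correct and follows essentially the same approach as the paper: both use the Lindstr\"om--Gessel--Viennot expansion (Lemma~\ref{lem:disj-alter-def}) to write $\mc K^n_2$ as a $2\times 2$ determinant in single-path partition functions, verify that the scaling factors in \eqref{e.K^n definition} match those of $\mc Z^n$ (with the $nt-1$ row handled by the shift $t\mapsto t-\tfrac1n$, $y\mapsto y+\tfrac1{2n^{1/2}}$), and then invoke Lemma~\ref{l.Z joint convergence}. Your version is simply more explicit about the algebra leading to \eqref{e.K^n clean identity} and about the Skorokhod coupling, whereas the paper compresses these into a couple of sentences.
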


\begin{proof}
Since $\cK_2$ and $\cK^n_2$ are the same continuous function of $\cZ$ and $\cZ^n$ by \eqref{e.K definition} and Lemma~\ref{lem:disj-alter-def}, respectively, it suffices to prove that both the marginals of $(\cZ^n,\mc K^n_2)$ converge to the correct limits. That $\smash{\cZ^n\stackrel{d}{\to} \cZ}$ is Lemma~\ref{l.Z joint convergence}. So we turn to the convergence of $\cK^n_2$.

First, by Lemma~\ref{lem:disj-alter-def}, we can express $\cK_2^n$ as a determinant of a matrix whose entries are given by $Z^n_1$ evaluated at the appropriate points. By the joint convergence of $Z^n_1$ after rescaling to $\cZ$ as given in Lemma~\ref{l.Z joint convergence}, we obtain that the limit as $n\to\infty$ of this determinant is exactly the one appearing in the definition \eqref{e.K definition} of $\cK_2$ as a determinant of $\cZ$ (note that the required rescaling for the factors of $Z_1(ns+2n^{1/2}x_i,ns; nt+2n^{1/2}y, nt-1)$ matches what is present by the discussion after \eqref{e.K^n definition}). This completes the proof.
\end{proof}

\subsection{Proof of the limiting BK inequality for disjoint polymers in the CDRP}\label{s.cdrp bk proof}

Here we give the proof of Theorem~\ref{thm:disj-BK}. We start with two basic but useful facts about weak convergence.

\begin{lemma}\label{l.weak limit of conditioned measures}
Let $S$ be a metric space with Borel $\sigma$-algebra $\mc S$. Let $X_1, X_2, \ldots$ and $X$ be random elements taking values in $S$ with $X_n \smash{\stackrel{d}{\to}} X$. Let $\msf E\in \mc S$ be a continuity set for $X$, i.e., $\P(X\in \partial\msf E) = 0$, where $\partial \msf E = \bar{\msf E}\setminus \msf E^{\circ}$ is its topological boundary, and suppose $\P(X\in\msf E) > 0$. Then $X_n$ conditioned on $X_n\in\msf E$ converges in distribution to $X$ conditioned on $X\in\msf E$.
\end{lemma}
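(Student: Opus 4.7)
The plan is to verify convergence in distribution directly: for every bounded continuous $f:S\to\R$, I would show that
\[
\E[f(X_n) \mid X_n\in \msf E] \longrightarrow \E[f(X) \mid X\in \msf E].
\]
Writing each conditional expectation as a ratio, it suffices to prove (i) $\P(X_n \in \msf E)\to \P(X\in \msf E)$, and (ii) $\E[f(X_n)\mathbf{1}_{\msf E}(X_n)] \to \E[f(X)\mathbf{1}_{\msf E}(X)]$. Combined with the hypothesis $\P(X\in\msf E)>0$, this ratio argument immediately yields the desired conditional weak convergence; the final step is just continuity of division at a nonzero denominator.

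For (i), the continuity-set hypothesis $\P(X\in\partial \msf E)=0$ is exactly the condition under which the Portmanteau theorem upgrades convergence in distribution to convergence of probabilities: $\P(X_n\in\msf E)\to\P(X\in\msf E)$. In particular, $\P(X_n\in \msf E)>0$ for all sufficiently large $n$, so the conditional laws are well-defined from some $n_0$ onwards.

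For (ii), I would consider the bounded function $g := f\cdot \mathbf{1}_{\msf E}$. A point $x\in S$ fails to be a continuity point of $\mathbf{1}_{\msf E}$ precisely when $x\in \partial \msf E$, so, combining with the continuity of $f$ everywhere, the discontinuity set $D_g$ of $g$ is contained in $\partial \msf E$. By assumption $\P(X\in D_g)\le \P(X\in\partial \msf E)=0$. The extended form of the Portmanteau/continuous mapping theorem (bounded functions continuous away from a $P_X$-null set) then gives $\E[g(X_n)]\to \E[g(X)]$, which is exactly (ii).

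There is no real obstacle here; the only step requiring a moment's thought is the observation that the discontinuity set of $\mathbf{1}_{\msf E}$ equals $\partial \msf E$, which is what makes the continuity-set hypothesis precisely the right one. Everything else is a standard Portmanteau argument, and the whole proof can be packaged as a direct application of two statements from the Portmanteau theorem.
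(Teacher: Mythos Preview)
Your proof is correct and takes essentially the same approach as the paper: write the conditional expectation as a ratio, apply the Portmanteau theorem to the denominator, and apply the continuous mapping theorem (for a.s.\ continuous bounded functions) to the numerator. Your write-up is in fact slightly more detailed than the paper's in identifying the discontinuity set of $f\cdot\mathbf{1}_{\msf E}$ and noting that the conditional laws are well-defined for large $n$.
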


\begin{proof}
It suffices to show that, for any bounded continuous function $f:S\to\R$, as $n\to\infty$,
$
\E[f(X_n)\mid X_n\in\msf E] \to \E[f(X)\mid X\in\msf E].
$
Write $\E[f(X_n)\mid X_n\in\msf E] = \E[f(X_n)\one_{X_n\in\msf E}]/\P(X_n\in\msf E)$. Then what we must show follows from the Portmanteau and continuous mapping theorems since $\msf E$ is a continuity set for $X$. First, the Portmanteau theorem implies $\P(X_n\in\msf E)\to \P(X\in\msf E)$, and, since $x\mapsto f(x)\one_{x\in\msf E}$ is almost surely continuous at $X$, the continuous mapping theorem implies $\E[f(X_n)\one_{X_n\in\msf E}]\to \E[f(X)\one_{X\in\msf E}]$.
\end{proof}

\begin{proof}[Proof of \Cref{thm:disj-BK}]
Recall we wish to bound the conditional probability 
\begin{equation}\label{e.conditional prob to bound}
\PP\left(\log \cK_2((x_1,x_2), s; (y_1, \bm\cdot), t)|_{[y_1,y_1+K]} - \log \cZ(x_1, s; y_1, t)  \in \msf A  \mid \log \cZ(x_1, s; \bm\cdot, t)|_{[y_1-R, y_1]}\right).
\end{equation}
(The case where $+K$ and $-R$ are simultaneously replaced by $-K$ and $+R$ is immediate after recalling the distributional symmetry of the processes under reflection around the origin, Lemma~\ref{l.Z symmetries}.) By spatial and temporal translation invariance (Lemma~\ref{l.Z symmetries}) of $\cZ$ and $\cK_2$, it suffices to prove the case $x_1=s=0$. By shear invariance of $\cK_2$ and $\mc Z$ (again Lemma~\ref{l.Z symmetries}), it further suffices to take $y_1=0$.

Let $\{\mc U_m\}$ be an increasing sequence of finite sets, i.e., $\mc U_1 \subset \mc U_2 \subset \ldots$ such that $\cup_{m=1}^\infty \mc U_m$ is dense in $[-R, 0]$. Note that if we consider the conditional probability above in \eqref{e.conditional prob to bound} but conditioned on $\log \cZ(0, 0; \bm\cdot, t)|_{\mc U_n}$ rather than $\log \cZ(0, 0; \bm\cdot, t)|_{[-R, 0]}$, then this sequence of conditional probabilities forms a martingale (in $n$) whose limit is \eqref{e.conditional prob to bound} (due to the denseness assumption on $\{\mc U_n\}$ and the continuity of $\mc Z$). So it suffices to prove the desired bound on the conditional probability conditioned on the values of $\log \cZ(0, 0; \bm\cdot, t)$ on a finite set; this is useful as we can avoid technicalities regarding the convergence of probabilities conditioned on equaling a given element in the space of continuous functions and instead work in finite-dimensional settings.

Now we turn to prove the desired bound when conditioned on the values of $\log \cZ(x_1, s; \bm\cdot, t)$ on a finite set $\mc U \subset [-R, 0]$. Let $g : \mc U \to \R$.   For any $\varepsilon\in (0,\infty]$ fixed, let $\mu_n^\varepsilon$ be the law of %
\begin{align*}
\MoveEqLeft[16]
\frac{\mc K^n_2([(0,0), (x_2, 0)]; [(0,t), (\bm\cdot,t-\frac{1}{n})])|_{[0,K]}}{\mc Z^n(0,0; 0,t)}\\
&\text{conditioned on}\quad \mc Z^n(0,0; \bm\cdot,t)|_{\mc U} \in \prod_{x\in\mc U}[g(x), g(x)+\varepsilon]%
\end{align*}
and $\nu_n$ be the law of $\mc Z^n(x_2,0; \bm\cdot,t-\tfrac{1}{n})$. Let $\mu^\varepsilon$ be the law of 
\begin{align*}
\frac{\mc K_2([(0,0), (x_2,0)]; [(0,t), (\bm\cdot,t)])|_{[0,K]}}{\mc Z(0,0; 0,t)} \quad\text{conditioned on}\quad \mc Z(0,0; \bm\cdot,t)|_{\mc U} \in \prod_{x\in\mc U}[g(x), g(x)+\varepsilon],
\end{align*}
and $\nu$ be the law of $\mc Z(x_2,0; \bm\cdot,t)$. 

Next we wish to take the weak limit as $n\to\infty$ and make use of Lemma~\ref{l.weak limit of conditioned measures}, whose assumptions we now verify. First, by Lemma~\ref{l.K convergence} we know that $\smash{(\cZ^n, \cK^n_2) \stackrel{d}{\to} (\cZ,\cK_2)}$. Second, $\cZ$ and $\cK_2$ are atomless (the latter following from the former by the definition \eqref{e.K definition}) and $\cZ(0,0; \bm\cdot, t)|_{\mc U}$ has full support, which follows from the Brownian Gibbs property of the KPZ line ensemble (see Section~\ref{ss:legp} or \cite{CH14}) or by \cite[Theorem 1.2]{chen2021regularity}. So Lemma~\ref{l.weak limit of conditioned measures} yields that, as $n\to\infty$,
\begin{align}\label{e.conditioned measure weak limit}
\mu^\varepsilon_n \stackrel{d}{\to} \mu^\varepsilon \quad\text{and}\quad \nu_n \stackrel{d}{\to} \nu.
\end{align}

Now we wish to apply Corollary~\ref{c.bk coupling log gamma new}. We make substitutions for the coordinates and quantities appearing in that statement so as to put things in the correct scaling. First, replace $a$ by $2n^{1/2}x_2$, $b$ by $nt$, and $b'$ by $nt+2n^{1/2}y_2$. Next, we replace $x\mapsto f(x)$ by $x\mapsto g(x)n^{1/2}(2^{-1}(2n^{1/2}-1))^{2nt}$. Now, Corollary~\ref{c.bk coupling log gamma new} and the definition \eqref{e.K^n definition} of $\mc K^n_2$ yield that there exists a coupling of $X^\varepsilon_n(\bm\cdot)\sim\mu^\varepsilon_n$ and $Y_n(\bm\cdot)\sim \nu_n$ such that, almost surely, for all $x\in[0, K]$,
\begin{align*}
X^\varepsilon_n(x) \leq Y_n(x).
\end{align*}
Combining \eqref{e.conditioned measure weak limit} with the previous display, the Skorohod representation theorem yields a coupling of $X^\varepsilon(\bm\cdot)\sim\mu^\varepsilon$  and $Y(\bm\cdot)\sim\nu$ such that $X^\varepsilon(x) \leq Y(x)$ for all $x\in[0, K]$.%

Setting $g(x)=e^{f(x)}$ for all $x\in \mc U$, the coupling yields, with $\varepsilon'_x = \log(1+\varepsilon e^{-f(x)})$ for all $x\in\mc U$ and for any increasing Borel measurable set $\msf A$, that
\begin{align*}
\MoveEqLeft[23]
\PP \Biggl(\hspace{-0.8cm}\parbox[c]{8.1cm}{\centering$\bigl(\log \cK([(0,0)(x_2, 0)]; [(0,t), (\bm\cdot, t)])|_{[0, K]}$\\[4pt]
\hspace{4.2cm}$- \log \cZ(0, 0; 0, t)\bigr) \in \msf A$}\   \middd \log \cZ(0, 0; \bm\cdot, t)|_{\mc U} \in \prod_{x\in\mc U}[f(x), f(x)+\varepsilon'_x] \Biggr) \\ 
&\hspace{2cm}\leq \PP \Bigl( \log \cZ(x_2, 0; \bm\cdot, t)|_{[0, K]} \in \msf A \Bigr). 
\end{align*}
Taking $\varepsilon\to 0$ and using the Portmanteau theorem completes the proof in the case of conditioning on $\log\cZ(0, 0; \bm\cdot, t) = f$.  Since the righthand side has no dependence on $f$, averaging the inequality over the conditioning variable yields the same inequality when conditioned on $\log \cZ(0, 0; \bm\cdot, t) \geq f$.
\end{proof}

\subsection{Proof of BK inequality for the KPZ line ensemble}\label{s.kpz line ensemble bk proof}

Here we give the proofs of Theorems~\ref{thm:disj-BK-line-ensemble} and \ref{thm:disj-BK-line-ensemble-technical}. We will need a way to move between $\h_{t,2}$ (equivalently, $\mc Z_2$) and $\mc K_2$, since we intend to use the analogous BK inequality statement we have already proven for $\cK_2$ (Theorem~\ref{thm:disj-BK}). This is provided by the next lemma.

\begin{lemma}\label{l.Z K comparison}
Let $K>0$. There exist $C,c>0$ such that, for $\varepsilon>0$, $s<t$, $x\in\R$, and $1\leq \lambda \leq \varepsilon^{-1/2}(\log\varepsilon^{-1})^{-1/2}$, the following holds. With probability at least $1-3K\exp(-c\lambda^2)$, for all $|y|\leq K$,
\begin{align*}
\mc Z_2\left((x,s); (y,t)\right) \leq \varepsilon^{-2}(1+C\lambda\varepsilon^{1/2}(\log \varepsilon^{-1})^{1/2})\cdot \mc K_2\left([(x,s), (x+\varepsilon,s)]; [(y,t), (y+\varepsilon,t)]\right).
\end{align*}
For any fixed $y\in\R$, the same holds with probability at least $1-C\exp(-c\lambda^2)$.
\end{lemma}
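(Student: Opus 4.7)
The plan is to start from Lemma~\ref{l.K Z identity} applied to the small rectangle with corners $(x,y)$ and $(x+\varepsilon,y+\varepsilon)$. This yields
\[
\log\!\left(1 + \frac{\mc K_2([(x,s),(x{+}\varepsilon,s)]; [(y,t),(y{+}\varepsilon,t)])}{\cZ(x,s;y{+}\varepsilon,t)\,\cZ(x{+}\varepsilon,s;y,t)}\right) = \int_x^{x+\varepsilon}\!\!\int_y^{y+\varepsilon}\frac{\cZ_2((x',s);(y',t))}{\cZ(x',s;y',t)^2}\,\diff y'\,\diff x'.
\]
Applying $\log(1+u)\le u$ on the left and lower bounding the integrand on the right by its value at the corner $(x,y)$ times a small multiplicative error will, after rearrangement, give an upper bound on $\cZ_2((x,s);(y,t))$ of the claimed shape, modulo comparing $\cZ(x,s;y,t)^2$ with the product $\cZ(x,s;y{+}\varepsilon,t)\cZ(x{+}\varepsilon,s;y,t)$ appearing in the denominator.

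Concretely, I will establish two modulus-of-continuity statements, each with the stated failure probability uniformly in $|y|\le K$: (a)~$\log\cZ$ varies by at most $C\lambda\varepsilon^{1/2}(\log\varepsilon^{-1})^{1/2}$ on $[x,x{+}\varepsilon]\times[y,y{+}\varepsilon]$; (b)~the same bound holds for $\log\cZ_2$. Claim (a) is exactly Lemma~\ref{l.two point for kpz sheet} combined with a union bound over an $\varepsilon$-grid of $y$-values in $[-K,K]$, contributing the prefactor $K$. Together, (a) and (b) yield both the lower bound on the integrand needed to extract $\cZ_2((x,s);(y,t))/\cZ(x,s;y,t)^2$ from the integral, and the comparison $\cZ(x,s;y,t)^2\le \cZ(x,s;y{+}\varepsilon,t)\cZ(x{+}\varepsilon,s;y,t)\cdot\exp(C\lambda\varepsilon^{1/2}(\log\varepsilon^{-1})^{1/2})$. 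Combining everything and using $e^u\le 1+Cu$ for $u$ bounded (legitimate since $\lambda\le\varepsilon^{-1/2}(\log\varepsilon^{-1})^{-1/2}$ makes the exponent at most $1$) gives the advertised inequality.

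The main obstacle is establishing (b). By \eqref{e.Z_1 Z_2 relation with h_1 h_2} and \eqref{e.scaling for KPZ}, for the basepoint $(0,s)$ we may write $\log\cZ_2((0,s);(y,t)) = \h_{t-s,1}(y)+\h_{t-s,2}(y)-(t-s)/6$, so variation in $y$ for fixed starting point follows from Lemma~\ref{l.two-point supremum for line ensemble} applied for $j=1,2$ (along with the parabolic correction being negligible on $\varepsilon$-scales). To control variation in the starting-point coordinate $x'\in[x,x+\varepsilon]$, I will use the time-reversal symmetry $\cZ(x,s;y,t)\stackrel{d}{=}\cZ(y,-t;x,-s)$ from Lemma~\ref{l.Z symmetries}(2), which lifts to an analogous symmetry for $\cZ_2$ (via its appearance as a limit of $\cM_n$, or by direct verification at the level of the definition \eqref{e.Z_k continuum}), thereby reducing the $x'$-variation to a $y'$-variation of a time-reversed process, again handled by Lemma~\ref{l.two-point supremum for line ensemble}.

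For the final assertion about a fixed $y$, no union bound over $|y|\le K$ is needed: the modulus of continuity is only required at the two $y$-values $y$ and $y+\varepsilon$ (and at $x$ and $x+\varepsilon$), so claims (a) and (b) hold at a single prescribed location, removing the factor of $K$ and yielding the improved probability $1-C\exp(-c\lambda^2)$.
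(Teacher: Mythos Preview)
Your proposal is correct and follows essentially the same route as the paper: apply Lemma~\ref{l.K Z identity} on the $\varepsilon\times\varepsilon$ rectangle, use $\log(1+u)\le u$, and control the integrand and the denominator via the modulus-of-continuity bounds of Lemmas~\ref{l.two-point supremum for line ensemble} and \ref{l.two point for kpz sheet}. You are in fact slightly more careful than the paper in explicitly flagging the need to control the $x'$-variation of $\log\cZ_2$ and proposing time-reversal to handle it; the paper's written proof only records the endpoint-variation estimate \eqref{e.Z_2 continuity} for $\cZ_2$ and then asserts the final inequality.
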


Here, as we will see, the $\varepsilon^{-2}$ comes from evaluating the integral in Lemma~\ref{l.K Z identity} over a pair of $\varepsilon$-sized neighborhoods, and the $\varepsilon^{1/2}(\log\varepsilon^{-1})^{1/2}$ term and error probability bound come from Brownian modulus of continuity estimates for $\cZ_1$ and $\cZ_2$.

\begin{proof}[Proof of Lemma~\ref{l.Z K comparison}]
By spatial and temporal translation invariance we may assume $x=s=0$. Now by \eqref{e.Z_1 Z_2 relation with h_1 h_2}, for any $w\in\R$,
$$\log \mc Z_2\left((0,0); (w,t)\right) = \h_{t,1}(w) + \h_{t,2}(w) - t/6.$$
Then Lemma~\ref{l.two-point supremum for line ensemble} applied to $\h_{t,1}$ and $\h_{t,2}$ separately, along with the triangle inequality,  yields that, on an event with probability at least $1-K\exp(-c\lambda^2)$, for all $|y|\leq K$ and all $w\in [y,y+\varepsilon]$, $|\log \mc Z_2\left((0,0); (w,t)\right) - \log \mc Z_2\left((0,0); (y,t)\right)| \leq \lambda\varepsilon^{1/2}(\log\varepsilon^{-1})^{1/2},$ which implies
\begin{align}\label{e.Z_2 continuity}
\mc Z_2\left((0,0); (w,t)\right) \leq \mc Z_2\left((0,0); (y,t)\right)(1+CR\varepsilon^{1/2}(\log\varepsilon^{-1})^{1/2})
\end{align}
for all such $w$. Similarly we also have that on an event of probability at least $1-K\exp(-c\lambda^2)$, for all $|y|\leq K$ and $w\in[y,y+\varepsilon]$,
\begin{align}\label{e.Z_1 continuity}
\frac{\mc Z_1\left((0,0); (w,t)\right)}{\mc Z_1\left((0,0); (y,t)\right)} \in [1-C\lambda\varepsilon^{1/2}(\log\varepsilon^{-1})^{1/2}, 1+C\lambda\varepsilon^{1/2}(\log\varepsilon^{-1})^{1/2}],
\end{align}
and, by Lemma~\ref{l.two point for kpz sheet}, the same also holds with $(0,0)$ in the numerator replaced by $(\varepsilon,0)$.

Now we apply Lemma~\ref{l.K Z identity} with $a=0$, $b=\varepsilon$, $c=y$, $d=y+\varepsilon$ and use $\log(1+w) < w$ for $w>0$ to obtain that, deterministically,
\begin{align*}
\int_{0}^\varepsilon\int_{y}^{y+\varepsilon}\frac{\mc Z_2((w,0);(z,t))}{\mc Z_1((w,0);(z,t))^2}\,\diff z\, \diff w \leq \frac{\mc K_2([(0,0), (\varepsilon,0)]; [(y,t), (y+\varepsilon,t)])}{\mc Z_1((0,0);(y+\varepsilon,t))\mc Z_1((\varepsilon,0);(y,t))}.
\end{align*}
Applying the estimates \eqref{e.Z_2 continuity} and \eqref{e.Z_1 continuity} to the previous display yields that, on an event of probability at least $1-3K\exp(-c\lambda^2)$, for all $|y|\leq K$,
\begin{align*}
\MoveEqLeft[10]
\varepsilon^2\frac{\mc Z_2((0,0);(y,t))}{\mc Z_1((0,0);(y,t))^2}\cdot\frac{(1-C\lambda\varepsilon^{1/2}(\log\varepsilon^{-1})^{1/2})}{(1+C\lambda\varepsilon^{1/2}(\log\varepsilon^{-1})^{1/2})}\\
&\leq \frac{\mc K_2([(0,0), (\varepsilon,0)]; [(y,t), (y+\varepsilon,t)])}{\mc Z_1((0,0);(y,t))^2}\cdot(1+C\lambda\varepsilon^{1/2}(\log\varepsilon^{-1})^{1/2}).
\end{align*}
Rearranging, cancelling common terms, and relabeling $C$ completes the proof in the case of controlling simultaneously over all $y\in[0,K]$. In the case of a fixed $y\in\R$, we use the stationarity of $w\mapsto \cZ_2( (0,0), (w,t))$ and Lemma~\ref{l.two-point supremum for line ensemble} with $K=1$ to obtain \eqref{e.Z_2 continuity} and \eqref{e.Z_1 continuity} at $w=y$ with probability at least $1-C\exp(-c\lambda^2)$. This completes the proof.
\end{proof}

Now we may turn to giving the proofs of Theorems~\ref{thm:disj-BK-line-ensemble} and \ref{thm:disj-BK-line-ensemble-technical}. For the convenience of the reader, we restate Theorem~\ref{thm:disj-BK-line-ensemble-technical} below.

\BKlineensembletechnical*

The shift by $Ct^{-1/3}\log M$ in \eqref{e.general bk} will arise in the proof from an invocation of Lemma~\ref{l.Z K comparison} (with the parameters set as functions of $M$) to move from $\cZ_2$ to $\cK_2$, and this is also the source of the $\exp(-cM^2)$ in the error term.

\begin{remark}\label{r.other choices of error term}
Observe that, by taking $M= M't$, for $M'$ a constant depending on $f$, Theorem~\ref{thm:disj-BK-line-ensemble-technical} yields
\begin{align*}
\MoveEqLeft[18]
\P\Bigl(\hh_{t,2}|_{[y,y+K]} - Ct^{-1/3}\log(M't) \in \msf A \midd \hh_{t,1}|_{[y-R, y]} = f \Bigr)\\
&\leq \P\left(\hh_{t,1}|_{[y,y+K]} \in \msf A\right) + CKt^{2/3}\exp(-c(M')^2t^2).
\end{align*}
In particular, as $t\to\infty$, one obtains the zero temperature version of the same inequality, i.e., for the parabolic Airy line ensemble \cite[Theorem 2.7]{GH22}, using the convergence of the KPZ line ensemble to the parabolic Airy line ensemble (which follows from combining the main results of \cite{Wu21} and \cite{AHALE}). This is also an immediate consequence of the full BK inequality as proven in \cite{GH22}. 
\end{remark}

\begin{proof}[Proof of Theorem~\ref{thm:disj-BK-line-ensemble} (assuming Theorem~\ref{thm:disj-BK-line-ensemble-technical})]
This follows immediately after taking $R=0$ and $f(y) = L$ in Theorem~\ref{thm:disj-BK-line-ensemble-technical}, and recalling that $\P(\hh_{t,1}(y) \geq L)  = \P(\hh_{t,1}(0) \geq L + y^2) \geq \exp(-c(L+y^2)^{3/2})$ if $L > L_0t^{-1/6}$ from Proposition~\ref{p.lower bound on upper tail}. Since $M>C(L+y^2)^{3/4}$ for a large enough constant $C$ is assumed, it follows that
\begin{align*}
\frac{C(K+1)t^{2/3}\exp(-cM^2)}{\P(\hh_{t,1}(y) \geq L)} \leq C(K+1)t^{2/3}\exp(-cM^2)
\end{align*}
(indeed, as mentioned after Theorem~\ref{thm:disj-BK-line-ensemble}, this lower bound on $M$ was assumed precisely for the above inequality to hold). This completes the proof.
\end{proof}

\begin{proof}[Proof of Theorem~\ref{thm:disj-BK-line-ensemble-technical}]
First, by stationarity of $w\mapsto \hh_{t,i}(w)+w^2$ jointly across $i$ (see, e.g., \cite[Proposition 1.3]{nica2021intermediate}), we may assume without loss of generality that $y=0$ (without modifying the values of $C$ and $c$, as can be seen by inspecting \eqref{e.general bk}). 

Next, it suffices to prove \eqref{e.general bk} in the case of $y+\bm\cdot$ rather than $y-\bm\cdot$. This is because (since we have reduced to $y=0$) $w\mapsto \mc Z_i\left((0,0);(w,t)\right)$ has the same distribution as $w\mapsto \mc Z_i\left((0,0);(-w,t)\right)$ for $i=1$ and $2$ jointly (which follows immediately from the definition in terms of the chaos expansions and the analogous symmetry of the underlying white noise).

Now, by monotonicity in conditioning (Lemma~\ref{l.conditional monotonicity}),
\begin{align*}
\MoveEqLeft[16]
\P\Bigl(\hh_{t,2}(\bm \cdot)|_{[0,K]} - Ct^{-1/3}\log M \in \msf A \midd \hh_{t,1}|_{[-R, 0]} = f \Bigr)\\
&\leq \P\Bigl(\hh_{t,2}(\bm \cdot)|_{[0,K]} - Ct^{-1/3}\log M \in \msf A \midd \hh_{t,1}|_{[-R, 0]} \geq f \Bigr),
\end{align*}
so it suffices to bound the latter quantity. We assume $f:[y-R,y]\to\R\cup\{-\infty\}$ is upper semicontinuous. In the remaining argument we will work under the conditioning $\hh_{t,1}|_{[-R, 0]} \geq f$.

Now, from \eqref{e.Z_1 Z_2 relation with h_1 h_2} and \eqref{e.scaling for KPZ} we may write $\hh_{t,2}(z)$ as
\begin{align*}
\hh_{t,2}(z) = t^{-1/3}\log \frac{\mc Z_2\left((0,0); (zt^{2/3},t)\right)}{\mc Z\left((0,0); (zt^{2/3},t)\right)} + \frac{t^{2/3}}{12}.
\end{align*}

We apply Lemma~\ref{l.Z K comparison} with $\varepsilon = M^{-4}$, $\lambda = M = \varepsilon^{-1/4} \ll \varepsilon^{-1/2}(\log\varepsilon^{-1})^{-1/2}$, $x=0$, and $s=0$, which yields that, with probability at least $1-3(K+1)t^{2/3}\exp(-cM^2)$, for all $|z|\leq K$, 
\begin{align}
\log \mc Z_2((0,0); (zt^{2/3},t))
&\leq  C + 8\log M + \log \mc K_2([(0, 0), (M^{-4}, 0)]; [(zt^{2/3}, t), (zt^{2/3}+M^{-4}, t)])\nonumber\\
&\leq  C\log M + \log \mc K_2([(0, 0), (M^{-4}, 0)]; [(zt^{2/3}, t), (zt^{2/3}+M^{-4}, t)]). \label{e.Z K final inequality}
\end{align}
We denote the event that the previous display occurs by $\mc E$, so that 
\begin{equation}\label{e.E^c prob bound}
\P(\mc E^c)\leq 3(K+1)t^{2/3}\exp(-cM^2).
\end{equation}

We have now related $\cZ_2$ to $\cK_2$. Ultimately we will need to invoke Theorem~\ref{thm:disj-BK}. But observe that, taking $x_1=0$, the probability appearing in that statement conditions on $\cZ(0,0; \bm\cdot, t)|_{[y_1-R,y_1]}$ and considers $\cK_2([(0,0), (x_2, 0)]; [(y_1, t), (\bm\cdot, t)])|_{[y_1,y_1+K]}$, i.e., the $x$-coordinate of the first ending point in $\cK_2$ is fixed and matches the right endpoint of the interval on which $\cZ$ is being conditioned. However, note that the $x$-coordinate of the first endpoint in the $\cK_2$ term in \eqref{e.Z K final inequality} is not fixed.

To address this, we next apply Lemma~\ref{l.K Z ordered inequality} with $x_1 = 0$, $x_2 = M^{-4}$, $y_1=0$, $y_2 = zt^{2/3}$, $y_3 = zt^{2/3}+M^{-4}$ to obtain that, almost surely, for all $z\in[0,K]$,
\begin{align*}
\frac{\mc K_2([(0, 0), (M^{-4}, 0)]; [(zt^{2/3}, t), (zt^{2/3}+M^{-4}, t)])}{\mc Z\left((0,0); (zt^{2/3},t)\right)} \leq \frac{\mc K_2([(0, 0), (M^{-4}, 0)]; [(0, t), (zt^{2/3}+M^{-4}, t)])}{\mc Z\left((0,0); (0,t)\right)},
\end{align*}
so that, indeed, we may work with a quantity in which the $x$-coordinate of the first endpoint in $\cK_2$ is fixed to be zero (which is also the right endpoint of the interval that $\hh_{t,1}$ will be  conditioned on).
Thus we see that, on $\mc E$, for all $z\in[0,K]$,
\begin{align}\label{e.h and K inequality}
\hh_{t,2}(z) \leq Ct^{-1/3}\log M + t^{-1/3}\log \frac{\mc K_2([(0, 0), (M^{-4}, 0)]; [(0, t), (zt^{2/3}+M^{-4}, t)])}{\mc Z\left((0,0); (0,t)\right)} + \frac{t^{2/3}}{12}.
\end{align}

For notational convenience, for all $w\in\R$, let 
\begin{equation}\label{e.tilde K}
\tilde{\mc K}^{(t)}_2(w) := \mc K_2([(0, 0), (M^{-4}, 0)]; [(0, t), (wt^{2/3}+M^{-4}, t)]).
\end{equation}
Next note that since $\msf A$ is increasing, $f\in \msf A$ and $g\geq f$ implies $g\in\msf A$. With this and \eqref{e.h and K inequality}, we obtain
\begin{align}
\MoveEqLeft[1]
\P\Bigl(\hh_{t,2}(\bm \cdot)|_{[0,K]} - Ct^{-1/3}\log M \in \msf A \midd \hh_{t,1}|_{[-R, 0]} \geq f \Bigr)\nonumber\\
&\leq \P\left(t^{-1/3}\log \frac{\tilde{\mc K}^{(t)}_2(\bm\cdot)|_{[0,K]}}{\mc Z\left((0,0); (0,t)\right)} + \tfrac{1}{12}t^{2/3} \in \msf A \middd \hh_{t,1}|_{[-R, 0]} \geq f\right) + \P\left(\mc E^c \midd \hh_{t,1}|_{[-R, 0]} \geq f\right). \label{e.BK derivation final inequality}
\end{align}

We wish to now invoke Theorem~\ref{thm:disj-BK} to bound the first term in \eqref{e.BK derivation final inequality}. We set $x_1 = 0$, $x_2 = M^{-4}$, $y_1 = 0$, and $s=0$. Then Theorem~\ref{thm:disj-BK} (recalling \eqref{e.tilde K} and that $\hh_{t,1}(0)$ is an increasing linear function of $\cZ( (0,0); (0,t))$) implies
\begin{align*}
\MoveEqLeft[12]
\P\left(t^{-1/3}\log \frac{\tilde{\mc K}^{(t)}_2(\bm\cdot)|_{[0,K]}}{\mc Z\left((0,0); (0,t)\right)} + \tfrac{1}{12}t^{2/3} \in \msf A \midd \hh_{t,1}|_{[-R, 0]} \geq f\right)\\
&\leq \P\left(z\mapsto t^{-1/3}\log \mc Z(M^{-4},0; zt^{2/3}+M^{-4},t)|_{[0,K]} + \tfrac{1}{12}t^{2/3} \in \msf A\right).
\end{align*}
By translation invariance, the final probability equals 
$$\P\left(z\mapsto t^{-1/3}\log \mc Z(0,0; zt^{2/3},t)|_{[0,K]} + \tfrac{1}{12}t^{2/3}\in \msf A\right) = \P\left(\hh_{t,1}(\bm\cdot)|_{[0,K]}\in \msf A\right)$$
by \eqref{e.Z_1 Z_2 relation with h_1 h_2}, which is our final upper bound on the first term of \eqref{e.BK derivation final inequality}. By the trivial bound, the second term of \eqref{e.BK derivation final inequality} is upper bounded by
\begin{align*}
\frac{\P\left(\mc E^c\right)}{\P\left(\hh_{t,1}|_{[-R, 0]} \geq f\right)}.
\end{align*}
As already noted in \eqref{e.E^c prob bound}, the numerator is upper bounded by $3(K+1)t^{2/3}\exp(-cM^2)$. %
This completes the proof.
\end{proof}

\section{Generalizations}    \label{s.generalizations}

In this section we discuss how our method actually extends to other multi-point partition functions. In Section~\ref{s.generalization log gamma}, we give a generalization in the log-gamma model, Theorem~\ref{thm:disj-BK-log-g-ext}, that follows easily from the arguments presented in Section~\ref{s.bk for log gamma}. In Sections~\ref{s.generalization.cdrp} and \ref{s.generalization.kpz} we indicate how Theorem~\ref{thm:disj-BK-log-g-ext} would lead to results for the CDPR and KPZ line ensemble, respectively, modulo some technical ingredients which are currently not available in the literature. 
To keep the exposition brief, instead of providing all the details, we simply indicate in words how to adapt the the proofs from the previous sections.

\subsection{More points in log-gamma}\label{s.generalization log gamma} To begin with, for the log-gamma polymer model, the proof of \Cref{thm:disj-BK-log-g-new} also leads to the following inequality (here we follow the notation of \Cref{s.log gamma}).

Let $\theta>0$. For any integers $m, n, w$ satisfying $m, n > w \ge 1$, let $\Omega_{m,n,w}$ be the following set of integer tuples:
\begin{align*}
\Omega_{m,n,w} := \left\{ (a_1, \ldots, a_k; \ell_1, \ldots, \ell_k)\in\N^{2k} : \parbox[c]{5.5cm}{\centering $k\in\N, w<a_1<\cdots <a_k\le m$,\\ $n-w>\ell_1>\cdots>\ell_k \ge 0$} \right\}.
\end{align*}
Here, $w$ will be the number of ``padding'' points that will form the leftmost starting point and topmost ending points in our theorem statement, and the conditions above ensure that there is sufficient space for these points.

Denote $(1,1)^k=[(1,1), \ldots, (k,1)]$ and $(m, n)^k =[(m,n), \ldots, (m,n-k+1)]$ for each $k\in \N$. Also, for more compact notation, for a vector $\mb a = (a_1, \ldots, a_k)$, let $(\mb a, 1) = ((a_1,1), \ldots, (a_k,1))$, and for a vector $\bm \ell = (\ell_1, \ldots, \ell_k)$, let $(m,\bm\ell) = ( (m,\ell_1), \ldots, (m,\ell_k) )$. Finally, recall the definitions of $J[m,n]$ from \eqref{e.J definition} and of $Z^{m,n}_j(i)$ from \eqref{e.Z^m,n first} and \eqref{e.Z^m,n second}.

\begin{theorem}   \label{thm:disj-BK-log-g-ext}
 For any increasing Borel measurable set $\msf A \subseteq \R^{|\Omega_{m,n,w}|}$, almost surely
\begin{equation*}
\begin{split}
\MoveEqLeft[23]
  \PP \Bigg( \Bigg\{\frac{T( [(1,1)^w, (\mb a, 1)], [(m,n)^w, (m,\bm\ell)] )}{ T_w((1,1), (m,n))}\Bigg\}_{ (\mb a; \bm\ell) \in \Omega_{m,n,w} } \!\!\! \in \msf A 
  \middd \Bigl\{Z^{m,n}_j(i) : (i,j) \in J[m,n], 1\leq j\leq w \Bigr\}\Bigg) \\
  &\le \PP \left( \Bigl\{T( [(\mb a,1)], [(m,\bm \ell)] )\Bigr\}_{ (\mb a; \bm \ell) \in \Omega_{m,n,w} } \!\! \in \msf A \right).    
\end{split}
\end{equation*}
\end{theorem}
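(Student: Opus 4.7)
The plan is to adapt the proof of Theorem~\ref{thm:disj-BK-log-g-new} almost verbatim, peeling off $w$ curves from the top of the line ensemble rather than a single curve. The three ingredients that will drive the argument are: a $w$-line generalization of Corollary~\ref{corr:cross-line}; a $w$-line generalization of Corollary~\ref{cor:reweight}; and the FKG inequality applied exactly as in Lemma~\ref{lem:neg-cor-fA}.

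First, I would apply Proposition~\ref{p.extended invariance} to rewrite
\[
T\bigl([(1,1)^w, (\mb a, 1)], [(m,n)^w, (m,\bm\ell)]\bigr) = S\bigl([(1,1)^{w,\shortuparrow}, (\mb a, 1)^\shortuparrow], [(m,n)^w, (m,\bm\ell)^\shortrightarrow]\bigr),
\]
where $(i,1)^{w,\shortuparrow}$ collects the points $(i,n-i+1)$ for $i\in\intint{1,w}$. By planarity, applying the same forcing argument as in the proof of Lemma~\ref{l.extended invariance one path}, the $w$ padding paths from $(1,1)^{w,\shortuparrow}$ to $(m,n)^w$ must stay on the top $w$ lines and travel straight across; their combined weight is precisely $T_w((1,1), (m,n))$ by Proposition~\ref{thm:invnyc}. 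The remaining $k$ paths with endpoints $(\mb a, 1)^\shortuparrow$ and $(m,\bm\ell)^\shortrightarrow$ live strictly below the top $w$ lines by the constraint $a_i > w$ and $\ell_i \le n-w-1$. This factorization identifies the ratio
\[
\frac{T([(1,1)^w, (\mb a, 1)], [(m,n)^w, (m,\bm\ell)])}{T_w((1,1), (m,n))} = F^{(w)}_{\mb a, \bm\ell}\bigl(\{Z^{m,n}_{j+w}(i+w)\}_{(i,j)\in J[m-w,n-w]}\bigr)
\]
with a multi-path partition function $F^{(w)}_{\mb a, \bm\ell}$ in the restricted line ensemble (top $w$ lines deleted), generalizing Corollary~\ref{corr:cross-line}. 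A second invocation of Proposition~\ref{p.extended invariance}, now in the smaller $(m-w)\times(n-w)$ environment, identifies $F^{(w)}_{\mb a, \bm\ell}$ evaluated at $\{Z^{m-w,n-w}_j(i)\}$ with $T([(\mb a,1)], [(m,\bm\ell)])$ in that smaller environment (up to the natural index convention that makes the planar ordering work).

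Next, I would read off the conditional density of the bottom $n-w$ curves given the top $w$ curves directly from Lemma~\ref{lem:shift}. The only terms in \eqref{eq:densj} that couple $\{z_j\}_{j\le w}$ and $\{z_j\}_{j> w}$ are the cross-terms of the form $z_{w+1}(i\pm 1)/z_w(i)$ between the $w$-th and $(w+1)$-st layers; all other terms factor between the two blocks. Writing $g$ for the conditioned values on the top $w$ lines, this gives a $w$-fold analogue of Corollary~\ref{cor:reweight}: the conditional law of $\{Z^{m,n}_{j+w}(i+w)\}_{(i,j)\in J[m-w,n-w]}$ equals the unconditional law of $\{Z^{m-w,n-w}_j(i)\}_{(i,j)\in J[m-w,n-w]}$ reweighted by a factor $\Gamma^{(w)}_g$ built out of exponentials $\exp(-Z^{m-w,n-w}_1(\cdot)/g(w,\cdot))$. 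The crucial monotonicity observation is that since each $Z^{m-w,n-w}_1(i)$ is an increasing function of the i.i.d.\ weights in the smaller environment, $\Gamma^{(w)}_g$ is a decreasing function of those weights, just as $\Gamma_g$ was in \eqref{e.Gamma_g definition}.

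Finally, since $T([(\mb a, 1)], [(m, \bm\ell)])$ in the smaller environment is an increasing function of the underlying i.i.d.\ weights (being a sum of products of those weights), the event that this family lies in the increasing set $\msf A$ is increasing in those weights. The FKG inequality applied exactly as in Lemma~\ref{lem:neg-cor-fA} then bounds the conditional probability $\E[\one_{\msf A}\,\Gamma^{(w)}_g]$ by $\P(\msf A)\cdot \E[\Gamma^{(w)}_g] = \P(\msf A)$, which is the claimed inequality. The main technical obstacle will be the combinatorial bookkeeping in the first step: verifying cleanly that after the $w$ padding paths are packed on top, the remaining partition function in the joint line ensemble is exactly a multi-path partition function in the restricted ensemble, with no residual interaction with the top $w$ lines. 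While conceptually identical to the $w=1,\ell=0$ case treated in Lemma~\ref{l.extended invariance one path}, the multi-path accounting, and the index alignment between the original $m\times n$ environment on the LHS and the smaller environment implicitly parameterizing the RHS, will require careful notation.
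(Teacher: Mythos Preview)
Your proposal is correct and follows essentially the same approach as the paper's own (sketched) proof: both peel off the top $w$ lines using the extended invariance identity and the packing argument, identify the conditional law via Lemma~\ref{lem:shift} as a reweighting of the smaller line ensemble, and conclude by FKG exactly as in Lemma~\ref{lem:neg-cor-fA}. One small notational slip: in your displayed $S$-identity the endpoints $(m,n-1),\ldots,(m,n-w+1)$ should carry a $\shortrightarrow$ (they lie on the right boundary, not the top line), so the forced padding paths for $i\ge 2$ actually cross into $V_2$; the weights still telescope to $T_w((1,1),(m,n))$ as you claim, but the justification is via the extended identity rather than Proposition~\ref{thm:invnyc} directly.
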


Its proof is essentially verbatim that of \Cref{thm:disj-BK-log-g-new}. Indeed, first, Proposition~\ref{p.extended invariance} expresses
\[T( [(a_1,1), \ldots, (a_k,1)], [(m,\ell_1), \ldots, (m,\ell_k)] )\]
as a $k$-path partition function in the line ensemble, i.e., in terms of $\{Z^{m,n}_j(i)\}_{(i,j)\in J[m,n]}$.
Then, \Cref{corr:cross-line} can be generalized to express the ratio 
\[ \frac{T( [(1,1)^w, (\mb a, 1)], [(m,n)^w, (m,\bm\ell)] )}{ T_w((1,1), (m,n))} = \frac{T( [(1,1)^w, (a_1,1), \ldots, (a_k,1)], [(m,n)^w, (m,\ell_1), \ldots, (m,\ell_k)] )}{ T_w((1,1), (m,n))},\]
also in terms of  $\{Z^{m,n}_j(i)\}_{(i,j)\in J[m,n]}$; more precisely, as a $k$-path partition function in the line ensemble obtained by excluding the top $w$ lines of $Z^{m,n}$. The idea of the proof is exactly the same as that of Corollary~\ref{corr:cross-line}, namely the $w$ paths from $(1,1)^{w,\shortuparrow}$ to $(m,n)^{w,\shortrightarrow}$ in the line ensemble are frozen and fully occupy the top $w$ lines, and the contribution of these paths to the numerator of the previous display is exactly cancelled by the denominator.

Second, \Cref{lem:shift} would imply a variant of \Cref{cor:reweight}, where the law of $\{Z^{m,n}_{j+w}(i+w)\}_{(i,j)\in J[m-w,n-w]}$ conditional on $\{Z^{m,n}_j(\bm\cdot)\}_{i=1}^w$ is given by the (unconditioned) law of $$\{Z^{m-w,n-w}_{j}(i)\}_{(i,j)\in J[m-w,n-w]}$$ with a certain reweighting factor.
The reweighting is of a form similar to $\Gamma_g$ (from \eqref{e.Gamma_g definition}), and is also negatively associated with any increasing event, by the proof of \Cref{lem:neg-cor-fA}. 
Then the same arguments in the proof of \Cref{thm:disj-BK-log-g-new} lead to \Cref{thm:disj-BK-log-g-ext}.

\subsection{CDRP limit}\label{s.generalization.cdrp}
If one passes \Cref{thm:disj-BK-log-g-ext} through the log-gamma to the CDRP scaling limit, as in our proof of \Cref{thm:disj-BK}, one would get a generalization of \Cref{thm:disj-BK}. However, such a scaling limit result for the multi-path partition function with general endpoints does not seem to be present in the literature, which we will expand on a little more shortly. For this reason, the following simply indicates the form of the result one would obtain for the CDRP from Theorem~\ref{thm:disj-BK-log-g-ext} if such a scaling limit was established. 

Recall the notation set up in \Cref{s.prelim tools}.
Fix any real numbers $x, y\in\R$, and $w, n\in \N$. Let 
$\msf A\subseteq \prod_{i=1}^n\mc C(\Lambda_i([x,x+K])\times\Lambda_i([y,y+K]),\R)$
 for some $K>0$ be an increasing Borel measurable set, where by increasing we mean that if $\mb f = (f_1, \ldots, f_n) \in \msf A$ and $\mb g = (g_1, \ldots, g_n)$ is continuous, has the same domain as $\mb f$, and satisfies $g_i(z) \geq f_i(z)$ for each $i\in\intint{1,n}$ and $z$ in the domain, then $\mb g \in \msf A$. Let $\mathbf{1}\in\R^w$ be the vector whose entries are all $1$. Then for any $s<t$, $R>0$, almost surely 
\begin{equation}   \label{eq:multi-disj-BK}
\begin{split}
\MoveEqLeft[16]
\PP \Biggl( \left\{\log \frac{\cM_{w+i}((x\mathbf{1}, \bm\cdot), s; (y\mathbf{1}, \bm\cdot), t)|_{\Lambda_i([x, x+K]) \times \Lambda_i([y, y+K]) }}{\cM_w((x\mathbf{1}), s; (y\mathbf{1}), t)}  \right\}_{i=1}^n  \in \msf A
 \ \middd\  \parbox[c]{4.7cm}{$\{\log \cZ_j(x, s; \bm\cdot, t)\}_{j=1}^w|_{[y-R, y] },$\\ $\;\{\log \cZ_j(\bm\cdot, s; y, t)\}_{j=1}^w|_{[x-R, x] }$}   \Biggr)\\
&\qquad\leq \PP \Bigl( \Big\{\log \cM^+_i(\bm\cdot, s; \bm\cdot, t)|_{\Lambda_i([x, x+K]) \times \Lambda_i([y, y+K]) }\Big\}_{i=1}^n \in \msf A \Bigr), 
\end{split}
\end{equation}
where 
\[
\cM^+_i(\bx, s; \by, t) = \prod_{j=1}^i (x_j-x)^w(y_j-y)^w \cM_i(\bx, s; \by, t),
\]
for any $\bx=(x_1,\ldots, x_i) \in \Lambda_i([x, x+K])$ and $\by=(y_1,\ldots, y_i)\in \Lambda_i([y, y+K])$. Here, $\cM^+$ is defined by removing the entropy factors coming from the factors involving $x$ and $y$ in the Vandermonde in the definition \eqref{e.M definition} of $\cM$.

Observe that the $w=n=1$ case is essentially Theorem~\ref{thm:disj-BK}, except that \eqref{eq:multi-disj-BK} allows the event $\msf A$ to also involve the processes values as the second starting point varies, and the conditioning is additionally done over $\mc Z$ as the starting point varies (apart from the ending point varying already present in Theorem~\ref{thm:disj-BK}). Indeed, these features are already present in the prelimiting Theorem~\ref{thm:disj-BK-log-g-new} and so carry over to the limit in a straightforward way by the same types of arguments as in Section~\ref{s.log gamma to CDRP}.

As mentioned just above, for larger values of $n$ or $w$, a formal proof of \eqref{eq:multi-disj-BK} requires an analog of \Cref{l.K convergence} that gives convergence of multi-path log-gamma partition functions to (appropriately normalized versions of) $\cM_m$ (up to the boundary of $\Lambda_m$) for each $m$, which does not appear to be present in the literature.  In fact, if the starting and ending points are all separate (i.e., one is in the interior of $\Lambda_m$), one can invoke the determinantal formula for $\cK$ (which from \eqref{e.M definition} equals $\cM$ up to a normalization) as in Lemma~\ref{l.K convergence} to get convergence to $\cK$, and if the points are all adjacent, one can use results or arguments as in \cite{corwin2017intermediate} to get convergence to $\cM$. In \eqref{eq:multi-disj-BK}, however, we consider collections of point where some are adjacent and some are separate, and no such general statement is currently available, though we certainly expect it to hold.

\subsection{Higher-indexed curves of the KPZ$_t$ line ensemble}\label{s.generalization.kpz}
Assuming \eqref{eq:multi-disj-BK}, it is plausible that one can also generalize \Cref{thm:disj-BK-line-ensemble-technical}, to bound the higher-indexed curves in the KPZ$_t$ line ensemble conditional on the top several curves.

Namely, take any $w, n\in\N$, $y\in\R$, $K, R\geq 0$. Let $\msf A\subseteq \mc C([y,y+K],\R)^n$ be an increasing Borel measurable set, and for an interval $I$ and $\mb f, \mb g\in \mc C(I,\R)^w$, let $\mb f \geq \mb g$ mean $f_i(x) \geq g_i(x)$ for all $i\in\intint{1,w}$ and $x\in I$. Take $\mb f \in \mc C([y-R,y], \R)^w$, and any $t>0$ and $M > 0$. We expect to have
\begin{equation}   \label{eq:multi-KPZle-BK}
\begin{split}
\MoveEqLeft[12]
\P\left(\Big\{\sum_{j=w+1}^{w+i}\hh_{t,j}|_{[y,y+K]} - Ct^{-1/3}\log M\Big\}_{i=1}^{n} \in \msf A \middd \{\hh_{t,j}|_{[y-R, y]}\}_{j=1}^w = \mb f \right)\\
&\leq \P\left(\Big\{\sum_{j=1}^{i}\hh_{t,j}|_{[y,y+K]}\Big\}_{i=1}^{n} \in \msf A\right) + \frac{3(K+1)t^{2/3}\exp(-cM^2)}{\P( \{\hh_{t,j}|_{[y-R, y]}\}_{j=1}^w \geq \mb f)}.%
\end{split}
\end{equation}
where $C,c>0$ are constants that may depend on $w,n$.
We also expect the same to hold when the conditioning is replaced by $\{\hh_{t,j}|_{[y-R, y]}\}_{j=1}^w \geq f$, in which case we may relax the continuity assumption and allow $\mb f:[y-R,y]\to (\R\cup\{-\infty\})^w$ to be upper semicontinuous. 

Since the sum of the top $m$ lines of the KPZ line ensemble at a given point is the same as $\cM_m$ evaluated when all $m$ starting points coincide and all $m$ ending points coincide, the main input required to generalize \Cref{thm:disj-BK-line-ensemble-technical} to \eqref{eq:multi-KPZle-BK} assuming \eqref{eq:multi-disj-BK} would be a quantitative regularity estimate for $\cM_m$ (in order to relate it with $\cM^+_m$, which is in this context the analog of $\cK_2$ in the $m=2$ case) as the starting and ending points vary, essentially a higher-indexed version of \Cref{l.Z K comparison}. For the $m=2$ case handled in the earlier sections, we were able to make do with the identity \Cref{l.K Z identity} and regularity estimates for the KPZ equation, but such an identity does not seem to generalize to higher indexed curves in a way that is useful for this purpose. We do not pursue obtaining quantitative regularity estimates directly in this paper.

\appendix

\section{Miscellaneous proofs}\label{s.misc proofs}

In this appendix we prove Lemma~\ref{l.conditional monotonicity}, Proposition~\ref{p.lower bound on upper tail}, Lemma~\ref{l.two-point supremum for line ensemble}, and Lemma~\ref{l.two point for kpz sheet}. In Section~\ref{ss:legp} we state some facts about the KPZ line ensemble which will be needed in the proof of Proposition~\ref{p.lower bound on upper tail}, which will be given in Section~\ref{s.lower bound on upper tail}. Section~\ref{s.misc.mono in conditioning} proves Lemma~\ref{l.conditional monotonicity}. The proofs of Lemmas~\ref{l.two-point supremum for line ensemble} and \ref{l.two point for kpz sheet} will be given in Section~\ref{s.regularity proofs}.

\subsection{Line ensembles and Gibbs properties}\label{ss:legp}

W start the following Gibbs properties of $\h_{t,1}$ given $\h_{t,2}$. 
For any $a<b$, denote by $\Fext([a, b])$ the $\sigma$-algebra generated by $\h_{t,1}$ on $\R\setminus (a, b)$, and $\h_{t,2}$.

\begin{lemma} \label{lem:Gibbs}
Take any $a<b$ and $t>0$.
Conditional on $\Fext([a, b])$,
for (1) law of $\fh^\beta_{t,1}$ in $[a,b]$, (2) the rate $2$ Brownian bridge connecting  $\smash{\fh^\beta_{t,1}(a)}$ and $\smash{\fh^\beta_{t,1}(b)}$, the former is absolutely continuous with respect to the latter, with Radon-Nikodym derivative (for a path $B$) proportional to $W(B, \fh^\beta_{t,2})$, where
\begin{equation}\label{e.rn derivative}
W(f, g)=
\exp\Big( - 2\int_a^b \exp(f(x)-g(x))  dx \Big)
\end{equation}
\end{lemma}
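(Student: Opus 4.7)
The plan is to derive the statement as a direct consequence of the $H$-Brownian Gibbs property enjoyed by the KPZ$_t$ line ensemble $(\h_{t,k})_{k\geq 1}$. This property was established in \cite{CH14}, and the matching to our particular normalization of $(\h_{t,k})$ (via \eqref{e.Z_1 Z_2 relation with h_1 h_2}) is addressed in \cite{nica2021intermediate}; see also the discussion following \eqref{e.Z_k continuum}. In its general form, this property states that for any pair of consecutive indices $k_1\leq k_2$ and interval $[a,b]$, the conditional law of the restricted curves $(\h_{t,k}|_{[a,b]})_{k=k_1}^{k_2}$, given the sigma-algebra generated by all remaining values of the line ensemble, is that of independent rate-$2$ Brownian bridges interpolating the values $\h_{t,k}(a)$ and $\h_{t,k}(b)$, reweighted by a density proportional to
\[ \exp\Bigl(-2\sum_{i=k_1-1}^{k_2}\int_a^b \exp(B_i(x) - B_{i+1}(x))\,dx\Bigr), \]
where $B_{k_1-1}$ and $B_{k_2+1}$ are the (fixed) curves immediately outside the window and $B_{k_1},\ldots,B_{k_2}$ are the candidate sample paths, with the term involving $B_{k_1-1}$ omitted when $k_1=1$.

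Next I would specialize to $k_1=k_2=1$ on the window $[a,b]$. As there is no curve of index $0$, only the single interaction with $\h_{t,2}$ survives, and the reweighting density becomes precisely $W(B,\h_{t,2})$ as defined in \eqref{e.rn derivative}. This density is measurable with respect to $\Fext([a,b])$, since it depends only on $\h_{t,2}$ and on the endpoint values $\h_{t,1}(a), \h_{t,1}(b)$, neither on $\h_{t,k}$ for $k\geq 3$ nor on further values of $\h_{t,1}$ outside $(a,b)$. Consequently, by the tower property, the conditional law given $\Fext([a,b])$ (the smaller sigma-algebra used in the lemma) agrees with the conditional law given the full exterior sigma-algebra featured in the Gibbs property, yielding the statement.

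The only point requiring care is bookkeeping of constants: the factor of $2$ in \eqref{e.rn derivative} and the rate-$2$ Brownian bridge convention both originate from the $\tfrac{1}{4}\partial_y^2$ coefficient in the SHE \eqref{e.SHE definition}, together with the relation \eqref{e.Z_1 Z_2 relation with h_1 h_2} between $(\h_{t,1},\h_{t,2})$ and ratios of multi-path partition functions. These translations have already been carried out in \cite{nica2021intermediate}, so no substantive obstacle is anticipated beyond this normalization check.
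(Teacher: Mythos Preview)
Your proposal is correct and matches the paper's approach: the paper does not give a self-contained proof of this lemma but simply records it as a consequence of the $H$-Brownian Gibbs property established in \cite{CH14}, with the connection to the CDRP normalization supplied by \cite{nica2021intermediate} (and the specific form quoted from \cite[Proposition 2.6, Theorem 2.8]{GH22}). Your write-up is slightly more explicit in spelling out the specialization $k_1=k_2=1$ and the tower-property reduction from the full exterior $\sigma$-algebra to $\Fext([a,b])$, but the substance is the same.
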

This Gibbs property was first introduced and proven for the KPZ line ensemble in \cite{CH14}. The connection between the KPZ$_t$ line ensemble and CDRP was formally established in \cite{nica2021intermediate}.
The form of the Gibbs property presented here is from \cite[Proposition 2.6, Theorem 2.8]{GH22}.

A useful consequence of the Gibbs property is the monotonicity in boundary data recorded below.

\begin{lemma}[Monotonicity in boundary data]\label{l.monotonicity}
Fix $a<b$, real numbers $\smash{w^{*}, z^{*}}\in \R$ and measurable functions $\smash{g^{*}}:[a,b]\to\R \cup \{-\infty\}$ for $*\in \{\uparrow, \downarrow\}$ such that $\smash{w^{\downarrow}\leq w^{\uparrow}}$, $\smash{z^{\downarrow}\leq z^{\uparrow}}$, and, for all $s\in (a,b)$, $\smash{g^{\downarrow}(s)\leq g^{\uparrow}(s)}$. 

For $*\in \{\uparrow, \downarrow\}$, let $\smash{\mathcal{Q}^{*}}$ be a process on $[a,b]$ such that $\smash{\mathcal{Q}^{*}}(a)=\smash{w^{*}}$ and $\smash{\mathcal{Q}^{*}}(b)=\smash{z^{*}}$, with Radon-Nikodym derivative with respect to Brownian bridge given by $W(\mathcal{Q}^{*}, g)$ for $\beta=1$.
Then there exists a coupling of the laws of $\smash{\mathcal{Q}^{\uparrow}}$ and $\smash{\mathcal{Q}^{\downarrow}}$ such that almost surely $\smash{\mathcal{Q}^{\downarrow}}(s)\leq \smash{\mathcal{Q}^{\uparrow}_j(s)}$ for all $s\in (a,b)$.
\end{lemma}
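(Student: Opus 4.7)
The plan is to reduce the desired coupling to a finite-dimensional monotonicity statement by discretizing $[a,b]$, invoke Holley's inequality to produce a monotone coupling at each mesh size, and pass to the continuum limit. The key algebraic input, valid whenever $u \le v$ and $g^\downarrow(x) \le g^\uparrow(x)$, is the pointwise inequality
\[
\exp(u - g^\uparrow(x)) + \exp(v - g^\downarrow(x)) \;\ge\; \exp(v - g^\uparrow(x)) + \exp(u - g^\downarrow(x)),
\]
which rearranges to $[\exp(v) - \exp(u)] \cdot [\exp(-g^\downarrow(x)) - \exp(-g^\uparrow(x))] \ge 0$. Summed over a mesh, this furnishes the supermodularity of the tilting factor needed for Holley's lattice condition.

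First, I would fix $N \in \N$, partition $[a,b]$ by endpoints $a = t_0 < t_1 < \cdots < t_N = b$ of width $\Delta t$, and consider the joint law $\mu^*_N$ of $(\mathcal{Q}^*(t_1), \ldots, \mathcal{Q}^*(t_{N-1}))$ on $\R^{N-1}$. By the Brownian bridge factorization of the reference measure and a Riemann-sum approximation of the integral defining $W$, this joint law has a Lebesgue density proportional to
\[
\prod_{i=0}^{N-1} p_{2\Delta t}(x_{i+1} - x_i) \cdot \exp\Big( -2\Delta t \sum_{i=1}^{N-1} \exp(x_i - g^*(t_i)) \Big),
\]
under the boundary convention $x_0 = w^*$ and $x_N = z^*$, where $p_\sigma$ is the centered Gaussian density of variance $\sigma$. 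I would then verify Holley's lattice condition
\[
\mu^\uparrow_N(\mathbf{a} \vee \mathbf{b})\, \mu^\downarrow_N(\mathbf{a} \wedge \mathbf{b}) \;\ge\; \mu^\uparrow_N(\mathbf{a})\, \mu^\downarrow_N(\mathbf{b})
\]
by separating Gaussian and tilting contributions in the log-density. Setting $a_0 := w^\uparrow$, $b_0 := w^\downarrow$, $a_N := z^\uparrow$, $b_N := z^\downarrow$ (consistent with the hypothesized ordering of the boundary values), the Gaussian part reduces at each transition to the classical supermodularity inequality
\[
(a'-c')^2 + (b'-d')^2 \;\ge\; (a'\vee b' - c'\vee d')^2 + (a'\wedge b' - c'\wedge d')^2,
\]
applied with $a' = a_i, b' = b_i, c' = a_{i+1}, d' = b_{i+1}$, while the tilting part follows from the pointwise inequality of the preceding paragraph summed over $i \in \intint{1, N-1}$.

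Holley's inequality then produces, for each $N$, a monotone coupling of $\mu^\uparrow_N$ and $\mu^\downarrow_N$. Piecewise-linear interpolation between mesh points gives continuous processes $\mathcal{Q}^\uparrow_N, \mathcal{Q}^\downarrow_N \in \mc C([a,b], \R)$ with $\mathcal{Q}^\uparrow_N \ge \mathcal{Q}^\downarrow_N$ pointwise on $[a,b]$. Tightness in $\mc C([a,b], \R)^2$, standard for tilted Brownian bridges with the tilt $W$, together with convergence of finite-dimensional distributions to those of $(\mathcal{Q}^\uparrow, \mathcal{Q}^\downarrow)$ as $N \to \infty$, yields via Skorohod representation the desired monotone coupling on $\mc C([a,b], \R)^2$. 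The main technical obstacle I would anticipate is the case where $g^\downarrow$ (or $g^\uparrow$) is allowed to take the value $-\infty$ on a set of positive measure, which makes the discretized tilting factor degenerate at finite $N$; this is addressed by monotonically approximating each $g^*$ from above by bounded continuous functions $g^*_n$, applying the previous argument at each approximant (where the ordering $g^{\uparrow}_n \ge g^{\downarrow}_n$ is preserved), and extracting the desired limit coupling by tightness and the continuity of the Radon-Nikodym derivative $W$ under monotone convergence.
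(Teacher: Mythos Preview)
The paper does not give its own proof of this lemma; immediately after the statement it simply cites Lemmas~2.6 and 2.7 of \cite{CH14}, pointing to \cite{dimitrov2022characterization} for a more detailed argument. Your discretize--Holley--limit strategy is correct and is essentially the standard proof in those references (there the finite-dimensional monotone coupling is usually produced via monotone Glauber dynamics rather than by checking Holley's lattice condition directly, but for these Gaussian-plus-log-concave tilts the two are equivalent). Your verification of the lattice condition is right: the Gaussian transition factors satisfy the quadratic supermodularity inequality you state, and the tilting factors satisfy it by the rearrangement $[\exp(v)-\exp(u)][\exp(-g^\downarrow)-\exp(-g^\uparrow)]\ge 0$.

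Two small clean-ups. First, you call $\mu^*_N$ ``the joint law of $(\mathcal{Q}^*(t_1),\ldots,\mathcal{Q}^*(t_{N-1}))$'' but then give it the Riemann-sum tilted density; the true finite-dimensional marginal of $\mathcal{Q}^*$ carries instead the exact integral tilt averaged over the inter-mesh Brownian bridges, which is a different (and less tractable) expression. The fix is simply to \emph{define} $\mu^*_N$ as the approximate measure with the Riemann-sum density, verify Holley for that (as you do), and then show that the piecewise-linear interpolations converge weakly to $\mathcal{Q}^*$ as $N\to\infty$; this is where the tightness and Riemann-sum-to-integral arguments enter. Second, your worry about $g^\downarrow=-\infty$ on a set of positive measure is vacuous: in that case $W(\,\cdot\,,g^\downarrow)\equiv 0$ and $\mathcal{Q}^\downarrow$ is not defined, so one may assume $g^*>-\infty$ a.e.\ throughout (the approximation by bounded $g^*_n$ is still convenient for keeping normalizing constants positive at finite $N$).
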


The positive temperature ($\beta=1$) statements are Lemmas~2.6 and 2.7 of \cite{CH14}. See also \cite{dimitrov2022characterization} for a more detailed proof.

The following is a useful correlation inequality saying the line ensemble is positively associated. 

\begin{lemma}[FKG inequality, {\cite[Theorem 2.7]{GH22}}]\label{l.fkg}
For any $t>0$ and $a<b$, and any pair of increasing events $A$ and $B$ in the space of all real continuous functions on $[a, b]$, 
$$\P\left(\h_{t,1}|_{[a,b]} \in A,\, \h_{t,1}|_{[a,b]} \in B\right) \geq \P\left(\h_{t,1}|_{[a,b]} \in A\right)\cdot \P\left(\h_{t,1}|_{[a,b]} \in B\right).$$
\end{lemma}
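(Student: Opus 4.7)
The plan is to deduce the FKG inequality for $\h_{t,1}$ from the classical Harris--FKG inequality applied to the i.i.d.\ inverse-gamma environment of the log-gamma polymer, and then pass to the scaling limit via Lemma~\ref{l.Z joint convergence}. To carry this out, I would first reduce to a finite-dimensional statement: any increasing Borel event $A\subseteq \mc C([a,b],\R)$ (in the uniform topology) can be approximated from the inside by cylinder increasing events of the form $\{f:(f(y_1),\ldots,f(y_N))\in \tilde A\}$, where $y_1,\ldots,y_N$ are drawn from a fixed countable dense subset of $[a,b]$ and $\tilde A\subseteq \R^N$ is an increasing Borel set. This is done by writing an open increasing set as a countable union of events $\{g:g\ge f_k \text{ pointwise on } [a,b]\}$ for continuous $f_k$, approximating each by its restriction to a finite grid via continuity of the paths, and handling general increasing Borel events by inner regularity. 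A monotone-convergence argument then shows it suffices to prove FKG for such cylinder events.

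Second, I would establish the cylinder FKG inequality at the prelimit, in the log-gamma polymer. From \eqref{e.T definition} and \eqref{e.Z^n definition}, the rescaled partition function $\log \cZ^n(0,0;y,t)$ is, for every fixed $y$, a coordinate-wise non-decreasing function of the i.i.d.\ inverse-gamma environment $\{X_{\mb v}\}_{\mb v\in\Z^2}$ (the deterministic multiplicative normalization and the logarithm preserve monotonicity). Hence, for any increasing Borel sets $\tilde A,\tilde B\subseteq \R^N$, the events
\[
E^n_{\tilde A}:=\bigl\{(\log\cZ^n(0,0;y_i,t))_{i=1}^N\in\tilde A\bigr\} \quad \text{and} \quad E^n_{\tilde B}:=\bigl\{(\log\cZ^n(0,0;y_i,t))_{i=1}^N\in\tilde B\bigr\}
\]
are both increasing events of the environment, so the Harris--FKG inequality for product measures yields $\P(E^n_{\tilde A}\cap E^n_{\tilde B})\ge \P(E^n_{\tilde A})\,\P(E^n_{\tilde B})$.

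Third, I would pass to the limit $n\to\infty$. By Lemma~\ref{l.Z joint convergence}, the vector $(\log\cZ^n(0,0;y_i,t))_{i=1}^N$ converges in distribution to $(\log\cZ(0,0;y_i,t))_{i=1}^N=(\h_{t,1}(y_i)-t/12)_{i=1}^N$, the additive constant being irrelevant for FKG. Choosing $\tilde A,\tilde B$ as continuity sets for the limit law---possible because the finite-dimensional marginals of $\h_{t,1}$ are absolutely continuous with respect to Lebesgue measure, for instance by the Brownian Gibbs property of Lemma~\ref{lem:Gibbs}---the Portmanteau theorem transfers the inequality to $\h_{t,1}$. The main obstacle I foresee lies in the first (approximation) step: reducing a general increasing Borel event in $\mc C([a,b],\R)$ to cylinder events while preserving monotonicity, ensuring no loss in the limit, and arranging the approximants to be continuity sets for the limit law. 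These are standard but delicate regularity arguments exploiting the continuity of the paths and the absolute continuity of the finite-dimensional distributions.
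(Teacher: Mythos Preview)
The paper does not actually prove this lemma; it is quoted as \cite[Theorem 2.7]{GH22} and used as a black box. So there is no ``paper's own proof'' to compare against in the usual sense.

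Your strategy is a legitimate alternative route and is very much in the spirit of other arguments in this paper (compare Lemma~\ref{lem:neg-cor-fA}, which applies Harris--FKG to the i.i.d.\ log-gamma environment). The monotonicity of $T_1(\mb u,\mb v)$, and hence of $\log\cZ^n(0,0;y,t)$, in the environment variables is immediate from the definition \eqref{e.T definition}, so the prelimit FKG inequality is correct. The passage to the limit via Lemma~\ref{l.Z joint convergence} and Portmanteau is also fine, and the absolute continuity of finite-dimensional marginals (needed to produce continuity sets) does follow from the Gibbs property, as you say. The reduction from general increasing Borel events in $\mc C([a,b],\R)$ to finite-dimensional cylinder events is, as you note, the step requiring care; it is handled by standard arguments (see e.g.\ \cite{barbato2005fkg}, which the paper itself invokes in the proof of Lemma~\ref{l.conditional monotonicity} for a closely related approximation). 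By contrast, the proof in \cite{GH22} works directly at the continuum level through the line-ensemble Gibbs structure, avoiding any prelimit model. Your approach has the merit of being elementary and self-contained within the framework already set up in this paper, at the cost of the measure-theoretic approximation step; the cited result is cleaner once the Gibbs machinery is in place.
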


Finally, we need the following estimate on the one-point lower tail of $\hh_{t,1}$.

\begin{proposition}  \label{lem:fh-lt}\label{l.lower tail}
There exist $c, L_0>0$ such that, for any $0<t\le 1$ and $L>t^{-1/6}L_0$,
\[
\P\left(\hh_{t,1}(0) < -L\right) \leq \exp(-cL^2t^{1/6}).
\]
If we instead assume $t>t_0$ for some $t_0>0$, and $L>L_0$, then
\[
\PP\left(\hh_{t,1}(0)<-L\right)<\exp(-cL^{5/2}),
\]
with the constant $c$ depending on $t_0$.
\end{proposition}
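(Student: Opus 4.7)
The plan is to handle the two stated bounds by separate arguments corresponding to the two dynamical regimes of the KPZ equation.

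\emph{Small-$t$ regime ($0<t\le 1$, target $\exp(-cL^2 t^{1/6})$).} My approach here is to exploit the Edwards--Wilkinson Gaussian behavior of $\cZ(0,0;0,t)$ at short times. Writing $\cZ(0,0;0,t)=p_t(0)(1+\eta_t)$, where $\eta_t$ is given by the It\^o--Wiener chaos expansion starting from order one, a direct second-moment computation shows $\Var(\eta_t)\asymp \sqrt{t}$ as $t\downarrow 0$. Explicit moment bounds for the chaos terms (equivalently, hypercontractivity for the multiple stochastic integrals against white noise) upgrade this to a Gaussian-type lower tail $\P(\eta_t<-\lambda)\le\exp(-c\lambda^2/\sqrt{t})$ for $\lambda$ up to a $t$-dependent cutoff that covers the range we need. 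Since
\[
\hh_{t,1}(0)=t^{-1/3}\bigl(\log p_t(0)+t/12+\log(1+\eta_t)\bigr),
\]
the event $\{\hh_{t,1}(0)<-L\}$ with $L>L_0 t^{-1/6}$ forces $\eta_t<-c L t^{1/3}$ (the $L_0 t^{-1/6}$ threshold ensures the deterministic pieces $\log p_t(0)$ and $t/12$, of size $O(\log(t^{-1}+1))$, are absorbed). Substituting $\lambda=Lt^{1/3}$ into the Gaussian bound yields $\exp(-cL^2 t^{2/3}/\sqrt{t})=\exp(-cL^2 t^{1/6})$.

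\emph{Large-$t$ regime ($t>t_0$, target $\exp(-cL^{5/2})$).} For $t$ bounded away from zero I plan to use the exact Laplace-transform formula for $\cZ(0,0;0,t)$ from the integrable probability literature (Amir--Corwin--Quastel). Markov's inequality gives, for each $u>0$,
\[
\P\bigl(\log\cZ(0,0;0,t)+t/12<-s\bigr)\le \exp(u e^{-s-t/12})\,\E\bigl[\exp(-u\cZ(0,0;0,t))\bigr],
\]
and the Laplace transform on the right admits an explicit Fredholm-determinant representation. Optimizing $u=u(s)$ and carrying out the asymptotic analysis of the determinant, in the manner of the lower-tail argument of Corwin--Ghosal, produces an upper bound whose dominant exponent is of order $s^{5/2}$ in the regime $s\asymp L t^{1/3}$. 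Translating back to the KPZ-scaled variable (so $s=Lt^{1/3}$) and absorbing the $t$-dependent constants of the asymptotic into a single $c=c(t_0)$ yields the claimed $\exp(-cL^{5/2})$ bound.

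\emph{Main obstacle.} The principal technical hurdle is the Fredholm-determinant estimate in the large-$t$ regime: the $5/2$-exponent, as opposed to the naive Gaussian $2$-exponent produced by soft moment bounds, is a structural consequence of the integrable formula and cannot be recovered without it. Fortunately we need only an upper bound rather than sharp asymptotics, so one can use relatively coarse determinantal estimates (as in \cite{GH22}) and avoid any Riemann--Hilbert machinery. In the small-$t$ regime, the hypothesis $L>L_0 t^{-1/6}$ is precisely the threshold above which the Gaussian lower tail of $\eta_t$ is effective; beyond this threshold higher-order chaos terms begin to dominate and the tail would cross over to $L^{5/2}$ and eventually $L^3$ behavior, consistent with the large-$t$ bound.
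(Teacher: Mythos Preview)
The paper does not prove this proposition at all; it simply cites \cite[Theorem~1.7]{das2023law} for the small-$t$ bound and \cite[Theorem~1]{CG20} for the large-$t$ bound. Your large-$t$ sketch (Laplace transform plus Fredholm determinant asymptotics) is essentially the Corwin--Ghosal argument that the paper cites, so that half is fine in spirit.

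Your small-$t$ argument, however, contains a concrete arithmetic error. You write that the event $\{\hh_{t,1}(0)<-L\}$ forces $\eta_t<-cLt^{1/3}$ because ``the $L_0 t^{-1/6}$ threshold ensures the deterministic pieces $\log p_t(0)$ and $t/12$, of size $O(\log(t^{-1}+1))$, are absorbed.'' But the hypothesis $L>L_0 t^{-1/6}$ only gives $Lt^{1/3}>L_0 t^{1/6}$, and $t^{1/6}\to 0$ as $t\to 0$ while $\log p_t(0)=\tfrac12\log(1/(\pi t))\to+\infty$. So $\log p_t(0)$ is \emph{not} absorbed by $Lt^{1/3}$ near the threshold; on the contrary it dominates. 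Unpacking the event for $L$ near $L_0 t^{-1/6}$ gives
\[
\log(1+\eta_t)<-Lt^{1/3}-\tfrac{t}{12}-\log p_t(0)\approx -\tfrac12\log(1/t),
\]
i.e.\ $1+\eta_t\lesssim\sqrt{t}$. This is a deep lower-tail event ($\eta_t$ within $O(\sqrt{t})$ of $-1$), not a moderate-deviation Gaussian fluctuation of $\eta_t$ about $0$. Hypercontractivity-type two-sided tail bounds on the chaos do not control this regime: they give useful information on $\P(|\eta_t|>\lambda)$ for $\lambda$ of order a few standard deviations, not on $\P(\eta_t<-1+\varepsilon)$ uniformly as $\varepsilon\downarrow 0$. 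The result you need is precisely a quantitative lower-tail estimate for the short-time SHE (positivity with a rate), which is the content of the cited paper and is not a consequence of second-moment chaos bounds alone.
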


\noindent These two estimates can be deduced from  \cite[Theorem 1.7]{das2023law} and \cite[Theorem 1]{CG20} respectively.

\subsection{Proof of monotonicity in conditioning} \label{s.misc.mono in conditioning}
Here we prove Lemma~\ref{l.conditional monotonicity}.

\begin{proof}[Proof of Lemma~\ref{l.conditional monotonicity}]
\cite[Theorem 2.7]{GH22} states that the law of $\h_{t}$ conditional on $\h_{t,1}(x_j) = y_j^{\shortuparrow}$ stochastically dominates the same conditional on $\h_{t,1}(x_j) = y_j^{\shortdownarrow}$ for any $x_1, \ldots, x_m \in \R$ and $y_1^{*}, \ldots, y_m^{*} \in \R$ for $*\in\{\shortuparrow,\shortdownarrow\}$ such that $\smash{y_j^{\shortuparrow} \geq y_j^\shortdownarrow}$ for all $j\in\intint{1,m}$. Thus the case where $\mc D$ is a finite set is already established. 

Now consider the case where $\mc D$ is an interval. Let $\{\mc D_m\}$ be an increasing sequence of finite subsets of $\mc D$ such that $\cup_{m=1}^\infty \mc D_m$ is a dense subset of $\mc D$. Then for any $k\in\N$ and any event $\msf A\subseteq \mc C(\intint{1,k}\times\mc D, \R)$ of the first $k$ curves of $\h_{t}$ on $\mc D$, it holds that
\begin{equation}\label{e.finite conditional prob}
\P\left(\h_t|_{\intint{1,k}\times\mc D} \in \msf A \midd \h_{t,1}|_{\mc D_m}\right)
\end{equation}
is a martingale in $m$. By Doob's martingale convergence theorem and the continuity of $\h_{t,1}$, this sequence has an almost sure limit, which equals
\begin{align}\label{e.full conditional prob}
\P\left(\h_t|_{\intint{1,k}\times\mc D} \in \msf A \midd \h_{t,1}|_{\mc D}\right).
\end{align}
Let us rephrase this in terms of the probability kernels underlying the conditional probabilities. For each $m\in\N$, let $\mu_m: \mc B\times \R^m \to [0,1]$ be the probability kernel associated to the regular conditional probability in \eqref{e.finite conditional prob}, and $\mu: \mc B\times \mc C(\mc D, \R) \to [0,1]$ be the one associated to \eqref{e.full conditional prob}, where $\mc B$ is the Borel $\sigma$-algebra of $\mc C(\intint{1,k}\times\mc D, \R)$. So, what we have obtained by the martingale argument is that, for each fixed $\msf A\in \mc B$, almost surely,
\begin{align*}
\mu_m(\msf A, \h_{t,1}|_{\mc D_m}) \to \mu(\msf A, \h_{t,1}|_{\mc D}).
\end{align*}
Since $\h_{t,1}$ has full support in $\mc C(\mc D, \R)$ (by the absolute continuity to Brownian motion implied by the Brownian Gibbs property enjoyed by $\h_t$; see Lemma~\ref{lem:Gibbs} and \cite{CH14}), it follows that there exists $\Omega_{\msf A}\subseteq \mc C(\mc D,\R)$ which has probability 1 under the Brownian measure in the statement of the lemma such that, for all $f\in \Omega_{\msf A}$, $\mu_m(\msf A, f|_{\mc D_m}) \to \mu(\msf A, f|_{\mc D}).$

Now let $\msf A$ be an increasing event and let $f,g \in\Omega_{\msf A}$ be such that $f(x)\geq g(x)$ for all $x\in\mc D$. Then by the finite case already established,  $\mu_m(\msf A, f|_{\mc D_m}) \geq \mu_m(\msf A, g|_{\mc D_m})$. Taking $m\to\infty$ yields $\mu(\msf A, f) \geq \mu(\msf A, g)$.

It remains to extend to all increasing events. This is accomplished by an approximation argument (see \cite[Lemma 6]{barbato2005fkg}) from a countable generating set, using the continuity of $\h_{t,1}$ and that $\mc C(\mc D, \R)$ is a Polish space. That is, we guarantee $\mu(\msf A, f) \geq \mu(\msf A, g)$ holds for a suitable countable collection of increasing $\msf A$ for all $f,g\in\Omega$ for a probability one $\Omega\subseteq \mc C(\mc D, \R)$, and this implies the same inequality for all increasing $\msf A$.
\end{proof}

\subsection{Proof of the lower bound on the upper tail}\label{s.lower bound on upper tail}
Here we prove Proposition~\ref{p.lower bound on upper tail}.
\newcommand{\fav}{\msf{Fav}}

\begin{proof}[Proof of Proposition~\ref{p.lower bound on upper tail}]
For an $M$ to be chosen, consider the favourable event 
$$\fav_t = \left\{\hh_{t,1}(-L^{1/2})\geq -L- M\right\}\cap \left\{\hh_{t,1}(L^{1/2})\geq -L - M\right\}.$$
By stationarity and positive association (Lemma~\ref{l.fkg}) of $\hh_{t,1}(x) + x^2$, 
$$\P(\fav_t) \geq \P(\hh_{t,1}(0) > -M)^2.$$
By Proposition~\ref{l.lower tail}, if $M > (t^{-1/6}\vee 1)L_0$, then $\P(\hh_{t,1}(0) > -M) \geq 1-\exp(-cM)$. So by setting $M = C(t^{-1/6}\vee 1)$ for a sufficiently large constant $C$ guarantees that $\P(\fav_t)\geq 1/2$ for all $t>0$.

Consider the $\sigma$-algebra $\F = \Fext(1,[-L^{1/2}, L^{1/2}])$. 
The Brownian Gibbs property says that the distribution of $\hh_{t,1}$ on $[-L^{1/2}, L^{1/2}]$, conditionally on $\F$, is that of a rate two Brownian bridge tilted by the Radon-Nikodym derivative $W_{H_t}/Z_{H_t}$ associated to the conditioned boundary data. By monotonicity (Lemma~\ref{l.monotonicity}), on $\fav_t$, this Brownian bridge stochastically dominates the rate two Brownian bridge $B$ from $(-L^{1/2}, -L-M)$ to $(L^{1/2}, -L-M)$ with no lower boundary condition.

Thus it follows that
\begin{align*}
\P\big(\hh_{t,1}(0) \geq L\big) \geq \E\Big[\P\big(\hh_{t,1}(0) \geq L \mid \F\big)\cdot\one_\fav\Big]  \geq \tfrac{1}{2}\cdot \P\left(B(0) \geq L\right).
\end{align*}
Now $B(0)$ is a normal random variable with mean $-L-M$ and variance $L^{1/2}.$ Thus by a standard lower bound on the normal probability (see, e.g., \cite[Lemma 2.10]{GH22}), it holds on $\fav_t$ that
\begin{align*}
\P\big(\hh_{t,1}(0) \geq L\big) \geq cL^{-3/4} \cdot\exp\left(-\frac{(L+L+M)^2}{2L^{1/2}}\right)%
&\geq \exp(-5L^{3/2}),
\end{align*}
the last inequality for $L>M$. This completes the proof.
\end{proof}

\subsection{Proofs of regularity statements}\label{s.regularity proofs}

Here we give the proofs of Lemmas~\ref{l.two-point supremum for line ensemble} and \ref{l.two point for kpz sheet}. For the proof of Lemma \ref{l.two point for kpz sheet}, we will need a basic two-point estimate. It will be proved along with Lemma~\ref{l.two-point supremum for line ensemble} as both follow immediately from the results of \cite{wu2024applications}.

\begin{lemma}\label{l.two-point for line ensemble}
There exist $C, c>0$ such that, for all $j\in\N$, $x,y\in\R$, and $K>0$,
\begin{align*}
\P\left(|\hh_{t,j}(x) - \hh_{t,j}(y) + x^2-y^2| \geq K|x-y|^{1/2}\right) \leq C\exp(-cK^2).
\end{align*}

\end{lemma}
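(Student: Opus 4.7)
The plan is to reduce to a two-point increment bound at the origin and then invoke known Brownian-scale tail estimates for the KPZ line ensemble. Writing $\tilde{\fh}_j(x) := \hh_{t,j}(x) + x^2$, the quantity whose tail we wish to bound is simply $|\tilde{\fh}_j(x) - \tilde{\fh}_j(y)|$. By stationarity of $x \mapsto \tilde{\fh}_j(x)$ (mentioned after \eqref{e.scaling for KPZ} for $j=1$ as a consequence of shear invariance in Lemma~\ref{l.Z symmetries}, and extended jointly across $j$ via \cite[Proposition 1.3]{nica2021intermediate}), the increment $\tilde{\fh}_j(x) - \tilde{\fh}_j(y)$ has the same law as $\tilde{\fh}_j(x-y) - \tilde{\fh}_j(0)$. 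Hence it suffices to establish, with constants $C, c$ independent of $j$ and $z$,
$$\P\left(|\tilde{\fh}_j(z) - \tilde{\fh}_j(0)| \geq K|z|^{1/2}\right) \leq C\exp(-cK^2)$$
for all $j \in \N$ and $z \in \R$.

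For this one-parameter two-point estimate I would invoke the corresponding Brownian-scale increment bound from \cite{wu2024applications}, which provides exactly such Gaussian-type tails uniformly in the curve index $j$. The underlying mechanism is to use the Brownian Gibbs property of the KPZ$_t$ line ensemble (Lemma~\ref{lem:Gibbs}) to represent $\hh_{t,j}$ on a compact interval containing $0$ and $z$ as a rate-two Brownian bridge reweighted by a Radon-Nikodym factor depending on $\hh_{t,j+1}$ (and, for $j \geq 2$, on $\hh_{t,j-1}$, by an analogous interior Gibbs property). A pure Brownian bridge already satisfies the claimed Gaussian increment bound at the scale $|z|^{1/2}$, so it remains to control the reweighting factor; this is done through the uniform one-point lower and upper tails for $\hh_{t,j}$ (Propositions~\ref{p.lower bound on upper tail} and~\ref{l.lower tail}, and their natural extensions to higher curves), which show that the normalizing partition function is not too small and the tilt is not too large, costing only constant factors and not disturbing the exponent in $K$.

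The main (mild) obstacle I anticipate is matching conventions with \cite{wu2024applications}, which works with slightly different scalings and a slightly different indexing; but no new probabilistic idea is required, since the combination of Brownian Gibbs property plus uniform one-point tails is standard, and the statement of Lemma~\ref{l.two-point for line ensemble} is exactly the kind of conclusion that framework produces.
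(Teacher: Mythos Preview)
Your high-level strategy---reduce via stationarity of $\hh_{t,j}(x)+x^2$ and invoke the two-point bound from \cite{wu2024applications}---matches the paper's proof. The paper applies stationarity at the end (to identify the centering $\E[\hh_{t,j}(x)]-\E[\hh_{t,j}(y)]$ with $y^2-x^2$) rather than at the start, but this is cosmetic.

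Where you diverge is in the mechanism by which \cite{wu2024applications} delivers the bound. The paper does not argue via the Brownian Gibbs property plus one-point tails; instead it uses that \cite{wu2024applications} proves \eqref{e.two-point} for any line ensemble in a certain log-concave class $\mathrm{LC}$, and then verifies membership: the O'Connell-Yor diffusion is in $\mathrm{LC}$ by \cite[Proposition~3.4]{wu2024applications}, the class is closed under weak limits and shifts, and the KPZ line ensemble arises as such a limit. The mechanism you sketch (Gibbs resampling plus Propositions~\ref{p.lower bound on upper tail} and~\ref{l.lower tail} ``and their natural extensions to higher curves'') would run into a real obstacle: those one-point tail bounds are only stated for $j=1$, and obtaining them uniformly in $j$ is precisely the hard part that the log-concavity framework of \cite{wu2024applications} is designed to circumvent. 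So while your citation is correct, your description of what makes it work is not, and if pushed to fill in details along the lines you suggest you would not easily recover the uniformity in $j$.
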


\begin{proof}[Proofs of Lemmas~\ref{l.two-point for line ensemble} and \ref{l.two-point supremum for line ensemble}]
We start with Lemma~\ref{l.two-point for line ensemble}.
\cite[eq. (2.6)]{wu2024applications} asserts that there exist $c,C>0$ such that, for any $j\in\N$, any line ensemble $\cL$ in a certain class denoted $\mrm{LC}$ (for log-concave), and any $x,y\in\R$,
\begin{align}\label{e.two-point}
\P\left(|\cL_j(x) - \cL_j(y) - (\E[\cL_j(x)] - \E[\cL_j(y)])| \geq K|x-y|^{1/2}\right) \leq C\exp(-cK^2).
\end{align}
The class $\mrm{LC}$ is closed under weak limits as well as constant shifts, and \cite[Proposition 3.4]{wu2024applications} states that the O'Connell-Yor diffusion is a member of $\mrm{LC}$. It thus follows from the convergence of O'Connell-Yor (appropriately centered) to the KPZ line ensemble (see \cite[Theorem 1.2]{nica2021intermediate} or \cite[Proposition 3.7]{CH14} for the result in the case of $j=1$ in closer notation to ours) that the KPZ line ensemble $\h_{t}$ is also a member of $\mrm{LC}$. Since the scaling \eqref{e.scaling for KPZ} to go from $\h_t$ to $\hh_t$ preserves the Wiener measure, $\hh_t$ also lies in $\mrm{LC}$ and thus \eqref{e.two-point} applies with $\L = \hh_t$. Recalling that $\hh_{t,j}(x) + x^2$ is stationary \cite[Proposition 1.4]{amir2011probability} yields the claim.

The proof of Lemma~\ref{l.two-point supremum for line ensemble} is identical after replacing the invocation of \cite[eq. (2.6)]{wu2024applications} with \cite[Theorem 2.8(ii)]{wu2024applications}.
\end{proof}

\begin{proof}[Proof of Lemma~\ref{l.two point for kpz sheet}]
First we prove the two-point tail estimate that there exists $c,C>0$ such that for any $x_1,x_2,y_1,y_2\in \R$ and $M>0$,
\begin{align}\label{e.sheet two point}
\P\left(|\h(x_1,0;y_1,t) - \h(x_2,0;y_2,t)| \geq M\|(x_1,y_1) - (x_2,y_2)\|^{1/2}\right) \leq C\exp(-cM^2).
\end{align}
From \eqref{e.sheet two point} we obtain the statement of Lemma~\ref{l.two point for kpz sheet} by a standard chaining argument (e.g., \cite[Lemma 3.3]{DV21}).
To prove \eqref{e.sheet two point}, we rewrite the inequality from Lemma~\ref{l.two-point for line ensemble} as
\begin{align}\label{e.to prove for sheet two-point}
\P\left(|\h(x,0;y_1,1) - \h(x,0;y_2,1)| \geq M|y_1 - y_2|^{1/2}\right) \leq C\exp(-cM^2).
\end{align}
Then, by (i) the triangle inequality, (ii) since $|a|+|b|\leq 2^{p-1}(a^4+b^4)^{1/4}$ for any $a,b\in\R$, and (iii) since  $x\mapsto \h(x,0;y,1)$ is equal in distribution to $x\mapsto \h(y,0;x,1)$,
\begin{align*}
|\h(x_1,0;y_1,1) - \h(x_2,0;y_2,1)|
&\leq |\h(x_1,0;y_1,1) - \h(x_1,0;y_2,1)| + |\h(x_1,0;y_2,1) - \h(x_2,0;y_2,1)|
\\
&\stackrel{d}{=} |\h(x_1,0;y_1,1) - \h(x_1,0;y_2,1)| + |\h(y_2,0;x_1,1) - \h(y_2,0;x_2,1)|\\
&\leq M\left(|y_1-y_2|^{1/2} + |x_1-x_2|^{1/2}\right) \leq \tilde CK\|(x_1,y_1) - (x_2,y_2)\|^{1/2},
\end{align*}
where the penultimate inequality holds on an event of probability at least $1-C\exp(-cM^2)$ by applying \eqref{e.to prove for sheet two-point}. This proves \eqref{e.sheet two point} and completes the proof of Lemma~\ref{l.two point for kpz sheet}. 
\end{proof}

\bibliographystyle{alpha}
\bibliography{bibliography}

\end{document}